\theoremstyle{plain}
\newtheorem{theorem}{Theorem}[section]
\newtheorem{corollary}[theorem]{Corollary}
\newtheorem{lemma}[theorem]{Lemma}
\newtheorem{proposition}[theorem]{Proposition}
\newtheorem*{assumption*}{Assumption}
\theoremstyle{remark}
\newtheorem{remark}[theorem]{Remark}
\numberwithin{equation}{section}
\newcommand{\ind}{1\!\kern-1pt \mathrm{I}}
\newcommand{\rsto}{]\!\kern-1.8pt ]}
\newcommand{\lsto}{[\!\kern-1.7pt [}
\numberwithin{equation}{section}
\newcommand{\id}{\operatorname{id}}
\newcommand{\im}{\ensuremath{\mathsf{i}}}
\begin{document}
\title[Fourier transform methods for pathwise covariance estimation]{Fourier transform methods for pathwise covariance estimation in the presence of jumps}
\author{Christa Cuchiero and Josef Teichmann}
\address{Vienna University of Technology, Wiedner Hauptstrasse 8 / 105-1, A-1040 Wien, Austria,
\newline ETH Z\"urich, D-Math, R\"amistrasse 101, CH-8092 Z\"urich, Switzerland}
\email{cuchiero@fam.tuwien.ac.at, jteichma@math.ethz.ch}
\thanks{The first author gratefully acknowledges the partial financial support from the European Research Council (ERC, grant FA506041). The first and second author gratefully acknowledge the support from the ETH-foundation.}

\begin{abstract}
We provide a new non-parametric Fourier procedure to estimate the trajectory of the instantaneous covariance process (from discrete observations of a multidimensional price process) in the presence of jumps extending the seminal work Malliavin and Mancino~\cite{MM:02, MM:09}. Our approach relies on a modification of (classical) jump-robust estimators of integrated realized covariance to estimate the Fourier coefficients of the covariance trajectory. Using Fourier-F\'ejer inversion we reconstruct the path of the instantaneous covariance. We prove consistency and central limit theorem (CLT) and in particular that the asymptotic estimator variance is smaller by a factor $ 2/3$ in comparison to classical local estimators.

The procedure is robust enough to allow for an iteration and we can show theoretically and empirically how to estimate the integrated realized covariance of the instantaneous stochastic covariance process. We apply these techniques to robust calibration problems for multivariate modeling in finance, i.e., the selection of a pricing measure by using time series and derivatives' price information simultaneously.

\end{abstract}

\keywords{non-parametric spot variance estimation, Fourier analysis, jump-diffusion, jump-robust estimation techniques}
\subjclass[2010]{60F05, 60G48}

\maketitle

\section{Introduction}

The recent difficulties in the banking and insurance industry are to some extent due to insufficient modeling of multivariate stochastic phenomena which appear in financial markets. There are several reasons why modeling is insufficient, but the two most important ones are the following: first, realistic multivariate models are difficult to calibrate to market information due to a lack of analytic tractability, hence oversimplified models are in use in delicate multivariate situations, and, second, usually either time series data or derivatives' prices are used to select a model from a given model class but \emph{not both} sorts of available information simultaneously.
It is often argued that due to the difference of the statistical measure and the pricing measure we are actually not able to use the information simultaneously, except if we determine the statistical measure and make an ansatz for the market price of risk. Robust model calibration instead uses time series and option price information simultaneously without conjecturing about quantities which are as hard to identify as drifts.

\subsection{Robust Calibration}\label{sec:1}

We aim to develop methods which allow for \emph{robust calibration}, i.e., estimation and calibration of a model in a well specified sense simultaneously from time series and derivatives' prices data in order to select \emph{a pricing measure}. 
Reasons why both kinds of data should enter the field of model selection in mathematical finance are the high dimensional parameter space of multivariate models and the lack of liquidly traded multi-asset options, which makes a calibration procedure solely based on derivatives' data infeasible. This difficulty can be tackled by additionally using time series of asset prices, from which certain model parameters can be determined. It is useful to demonstrate what we actually mean with \emph{robust calibration} by means of an example: take a Heston model
\begin{align*}
d X_t &= \kappa ( \theta - X_t) dt + \sigma \sqrt{X_t} dZ_t \, , \\
d Y_t &= \mu- \frac{X_t}{2} \, dt + \sqrt{X_t} d B_t \, ,
\end{align*}
where $ X $ denotes the stochastic variance process and $ Y $ the logarithmic price of a stock. The model is written with respect to a pricing measure, i.e., $ \exp(Y) $ is a martingale, if $\mu=0$, otherwise the model is written with respect to the real world measure. Through robust calibration we have to identify the initial value $X_0$, $ Y_0 $, the parameters $ \kappa, \theta, \sigma $ and the correlation parameter $\rho$ between the Brownian motions $Z$ and $B$ in order to specify the model for purposes of pricing, hedging or short term risk management. If we specify additionally $\mu$ we can use the model for (long term) risk management.
  
Apparently at least the initial values $ X_0 $, $ Y_0 $, and the parameters $ \sigma $ and $ \rho $ do not change under equivalent measure changes, so in principal the parameters $X_0,Y_0$, $\sigma, \rho$ \emph{can} be identified from the observation of a single trajectory, and it does not matter with respect to which equivalent measure we observe this trajectory. On the other hand market implied values for those parameters should coincide with values estimated from the time series if the model is close to correct. Here ``market implied values'' means to choose model parameter values such that the model's derivatives' prices and the market prices coincide as good as possible. The other parameters $ \kappa $, $\theta $ and $ \mu $ can be changed under equivalent measure changes (given that we stay in the above parametrized class) and there values depend on the data, which are used to estimate them.

Formally speaking we have defined the above model on a filtered probability space $ (\Omega,\mathcal{F},\mathbb{P}) $ and we consider equivalent measures $ \mathbb{Q} \sim \mathbb{P} $. Having specified a set of parameters $ \Theta $ and a semimartingale $ S^\theta $ depending on parameters $ \theta \in \Theta $, we can then define an equivalence relationship, namely $ \theta_1 \sim \theta_2 $ if $ S^{\theta_1}_\ast \mathbb{P} = S^{\theta_2}_\ast \mathbb{Q} $, i.e., equality for the respective measures on the canonical probability space of c\`adl\`ag paths, for some $ \mathbb{Q} \sim \mathbb{P} $. This equivalence relation defines orbits on $ \Theta $ and the space of orbits $ \Theta/\sim $, where the latter set is the set of invariant parameters, i.e., those parameters of $ S^\theta $, which remain unchanged by equivalent measure changes. The set $\Theta/\sim$ is non-trivial if there are pathwise defined (estimator) functionals which determine parameters of the vector $\theta$. 

Having this basic stochastic fact in mind, it should be clear that non-parametric estimation of instantaneous covariance processes is \emph{the} important task to be performed, since it yields -- in the previous concrete case -- information on $ \sigma $, $ \rho $ and the trajectory of instantaneous stochastic variance $ t \mapsto X_t $ along the observation interval. The parameters $ \kappa $ and $ \theta $ will rather be calibrated from derivatives' prices, since they cannot be identified from time series information without additional assumptions. The parameter $\mu$ cannot be identified from derivatives' prices but only from time series data and is hardest to identify. Identifying $ t \mapsto X_t $, $ \sigma $ and $ \rho $ from time series data and $ \kappa $, $\theta $ from derivatives' is what we would call \emph{robust calibration}: the procedure is more robust than classical calibration (using only derivatives' data), since more data are used, and it allows for model rejection if calibration is not feasible after identifying invariant parameters.

%We call a calibration procedure \emph{robust}, if it follows the subsequent steps:
%\begin{enumerate}
%\item estimate parameters which are invariant under equivalent measure changes from time series by econometric means together with respective confidence sets.
%\item calibrate the partially specified model via a non-linear (robust) pricing operator to market prices and determine the rest of the parameters. Here we do neither assume \emph{one} model anymore, nor do we expect the pricing operator to provide \emph{one price}, compare to the recent work~\cite{EbeMad:11}.
%\item reject the model if the second step of the calibration is producing time-inconsistent or time-dependent results when monitored along a time series of derivatives' prices.
%\end{enumerate}

To be precise on what we mean by time series data: we think of intraday (minute) price data for liquid instruments along periods of months up to years, such that we have about $ 10^5 $ data points available without loosing assumptions on time-homogeneity.
%In the literature the problem of identification of a pricing measure under time series information is usually solved by setting up high-dimensional filtering problems. From the filtering point of view direct non-parametric estimation techniques  
%can be understood as choosing excellent priors concentrated around invariant parameters in order to initialize the filtering procedure.

\subsection{Program of the Article and Related Literature  on Non-Parametric Covariance Estimation}
Based on the above calibration concept, the goal of the present article is to find methods which allow to estimate \emph{non-parametrically} in a multivariate setting the stochastic covariance of the stochastic covariance process $X$, and, to estimate the stochastic correlation between the log-price process $Y$ and the stochastic covariance process $X$. This involves a two step procedure where we first need to recover the realized path of the instantaneous covariance, from which we can then estimate the second order quantities. 

In order to achieve the first task of pathwise covariance reconstruction, we combine jump robust estimators with instantaneous covariance estimation based on Fourier methods (see Malliavin and Mancino~\cite{MM:02, MM:09}). More precisely, we modify jump robust estimators of integrated realized covariance (as considered by~\cite{BNGJPS:06,BNSW:06,J:08,PV:09,TT:11,W:06}) to obtain estimators for the Fourier coefficients of the realized path of the instantaneous covariance. By means of Fourier-F\'ejer inversion we then get an estimator for the instantaneous realized covariance. For this estimator we prove consistency and a central limit theorem, showing that the asymptotic estimator variance is smaller by a factor $2/3$ in comparison to the classical instantaneous covariance estimator. For the subsequent estimation of the second order quantities we then rely on existing jump robust estimators for integrated covariance, into which we plug the reconstructed path of the covariance. For this estimator of the integrated covariance of the covariance process we also provide a central limit theorem.

Concerning the literature on non-parametric covariance estimation in the presence of jumps, there are many precise asymptotic results on (i) \emph{integrated realized covariance} estimation available, but only few (e.g.~for classical sum of squares estimators) in the case of (ii) \emph{instantaneous covariance}.  To the best of our knowledge there are no asymptotic results on the estimation of the above described (iii) \emph{second order quantities} available when the involved processes have jumps.\\
With regard to (i), non-parametric jump robust estimators for the \emph{integrated realized covariance} range from threshold methods as considered by Mancini~\cite{M:09, M:11} to  (Bi- and Multi-)Power-variation estimators as studied by Barndorff-Nielsen et al.~\cite{BNS:04,BNGJPS:06, BNGS:04,BNSW:06}. These latter estimators have been successively generalized by replacing the power function with different specifications (see, e.g., Jacod~\cite{J:08}, Podolskij~\cite{P:06} and Tauchen and Todorov~\cite{TT:11}). An excellent account of all kinds of integrated covariance estimators and their asymptotic properties can be found in the book of Jacod and Protter~\cite{JP:12} and the literature therein. \\
Concerning non-parametric techniques to measure (ii) the \emph{instantaneous covariance}, the majority of the proposed estimators is based on differentiation of the integrated variance as for example in  Alvarez et al.~\cite{APPS:12},  Bandi and Ren\`o~\cite{BR:11}, Mykland and Zhang~\cite{MZ:08} or Jacod and Protter~\cite[Section 13.3]{JP:12}. These estimators correspond to so-called \emph{local realized variance estimators}. A similar more general approach relies on kernel estimators as in Fan and Wang~\cite{FW:08} or Kristensen~\cite{K:10}. The above described classical local realized variance estimators is a particular case of these kernel estimators, corresponding to the choice of the uniform kernel. Another strand of literature is based on Fourier methods as in Malliavin and Mancino~\cite{MM:02, MM:09}
and wavelet analysis as in Genon-Catalot et al.~\cite{GC:92}. With the exception of~\cite{BR:11} and~\cite{JP:12}, who consider the classical and truncated local realized variance estimator in the presence of jumps, the common assumption of the above articles is \emph{continuity of the trajectories} of the (log)-price process.\\
Non-parametric estimation of (iii) the \emph{second order quantities} based on the local realized variance estimator have already been considered by Vetter~\cite{V:12} and Wang and Mykland~\cite{MW:12} for the estimation of the correlation between the log-price and the variance process. Barucci and Mancino~\cite{BM:10} provide alternative estimation techniques based on Fourier methods for both, the variance of the variance process and the correlation between the log-price and the variance process. While the setting for all these estimators is based on the assumption of It\^o-processes with \emph{continuous} trajectories, we establish estimation procedures which also work in the presence of jumps. This is particularly interesting for stochastic volatility of volatility models with jump components. Recently, a new class of this type of models has been introduced by Barndorff-Nielsen and Veraart~\cite{BNV:13}.
As outlined in~\cite[Section 2.5]{BNV:13}, model identification, which means in this context testing whether an additional volatility component is present or not, should be based on the estimation of the quadratic variation of the spot volatility and thus requires jump robust estimators of the second order quantities.

The remainder of the article is organized as follows. In Section~\ref{sec:setting} we introduce the assumptions on the log-price and the instantaneous covariance process and Section~\ref{sec:4} gives an overview of the different steps in our estimation procedure. Section~\ref{sec:main_theorem} contains the statements of the main theorems. The remaining sections are dedicated to the proofs of the main theorems. In Section~\ref{sec:FC} we consider asymptotic properties for jump robust estimators of the Fourier coefficients, while in Section~\ref{sec:spotvar} consistency and a central limit theorem are shown for the Fourier-F\'ejer instantaneous covariance estimator. Section~\ref{sec:covcov} concludes with the proof of asymptotic normality for estimators of the integrated covariance of the instantaneous stochastic covariance process. The Appendix contains a simulation study~\ref{sec:sim} illustrating our theoretical findings.

\section{Setting and Methodology Overview}\label{sec:setting}

\subsection{Setting and Assumptions}

Throughout we let $T >0$ be fixed and work on a filtered probability space $(\Omega, \mathcal{F}, (\mathcal{F}_t)_{0  \leq t \leq T}, \mathbb{P})$, 
where we consider a $d$-dimensional (discounted) asset price process $(S_t)_{0 \leq t \leq T}$, which is supposed to be positive componentwise, and
adapted to the filtration $(\mathcal{F}_t)_{0 \leq t \leq T}$.
Due to positivity of $S$ we further assume 
\[
 S_t=(\exp({Y_{t,1}}), \ldots,\exp({Y_{t,d}}))^{\top}, \quad 0 \leq t \leq T,
\]
where $(Y_t)_{0 \leq t \leq T}$ denotes the $d$-dimensional (discounted)  logarithmic price process starting at $Y_0=y \in \mathbb{R}^d$ a.s. Due to no-arbitrage consideration $S$ and thus also $Y$ are supposed to be semimartingales with a rich structure of jumps.

Furthermore, let us introduce some mild structural assumptions on the log-price process $Y$,
namely that it is an It\^o-semimartingale of the following form:

\begin{assumption*}[H]
The logarithmic price process $Y$ satisfies
\begin{equation}
\begin{split}~\label{eq:Y}
Y_t&=y+\int_0^t b^Y_s ds + \int_0^t\sqrt{X_{s-}} dZ_s + \int_0^t \int_{\mathbb{R}^d} \chi(\xi)  (\mu^Y(d\xi,ds)- K_{s}(d\xi)ds) \\
&\quad+\int_0^t \int_{\mathbb{R}^d} (\xi-\chi(\xi))  \mu^Y(d\xi,ds) ,
\end{split}
\end{equation}
where $\sqrt{X}$ denotes the unique matrix square root on $S_d^+$, the space of positive semidefinite matrices, and
\begin{itemize}
\item $Z$ is a $d$-dimensional Brownian motion,
\item $b^Y$ an $\mathbb{R}^{d}$-valued locally bounded process,
\item $X$ a c\`adl\`ag process taking values in $S_d^+$ and
\item $\mu^Y(d\xi,dt)$ the random measure associated with the jumps of $Y$, whose compensator is given by $K_t(d\xi)dt$, where $K_t(d\xi)=K_t(\omega,d\xi)$ is for each $(\omega,t)$ a measure on $\mathbb{R}^d$. 
\end{itemize}
\end{assumption*}

\begin{remark}
\begin{enumerate}
\item
Usually, e.g., in~\cite{JP:12}, the assumption of an It\^o-semimartingale is formulated in terms of the \emph{Gringelions representation}, which means that there exists
an extension of the probability space, on which are defined a $d'$-dimensional Brownian motion $W$ and a Poisson random measure $\mathfrak{p}$ with L\'evy measure $\lambda$ such that
\begin{align*}
Y_t&=y+\int_0^t b^Y_s ds + \int_0^t\sigma_s dW_s + \int_0^t \int_{\mathbb{R}^d} \delta(\xi,s-) 1_{\{\|\delta\|\leq 1\}}  (\mathfrak{p}(d\xi,ds)- \lambda(d\xi)ds) \\
&\quad+\int_0^t \int_{\mathbb{R}^d} \delta(\xi,s-) 1_{\{\|\delta\|>1\}} \mathfrak{p}(d\xi,ds) ,
\end{align*}
where $\sigma_t$ is an $\mathbb{R}^{d\times d'}$-valued predictable process such that $\sigma^{\top}\sigma=X$ and $\delta$ is a predictable $\mathbb{R}^d$-valued function on $\Omega\times \mathbb{R}^d \times \mathbb{R}_+$.
In view of applications to (affine) processes, whose characteristics are given in terms of representation~\eqref{eq:Y}, we 
prefer to use the formulation of Assumption $(H)$ and do not use the Gringelions representation, since it involves an extension of the probability space and the specific form of $\delta$ and $\mathfrak{p}$ is not evident.
\item The conditions on the characteristics in Assumption $(H)$ correspond to those of~\cite[Assumption 4.4.2 (or $(H)$)]{JP:12} with the only difference that for the moment we do not assume some kind of local integrability on the jump measure, as in (iii) of~\cite[Assumption 4.4.2]{JP:12}.
\end{enumerate}
\end{remark}

Note that the assumption of an It\^o-semimartingale is satisfied by all continuous-time models used in mathematical finance. Indeed, beside the assumption of \emph{absolutely continuous} characteristics, this is the most general model-free setting which is 
in accordance with the no-arbitrage paradigm. This assumption is actually also the only one needed to prove
consistency of the Fourier-F\'ejer instantaneous covariance estimator, denoted by $\widehat{X}_t$. However for establishing a central limit theorem we also need some structural assumptions on the instantaneous covariance process $X$:

\begin{assumption*}[H1]
Assumption $(H)$ holds and the instantaneous covariance process $X$ 
is an It\^o-semimartingale of the form
\begin{align*}
 X_t=x +\int_0^t b^X_s ds +\sum_{j=1}^p \int_0^t Q_{s}^j dB_{s,j} +\int_0^t \int_{S_d} \xi  \mu^X(d\xi,ds)
\end{align*}
where
\begin{itemize}
\item $B$ is $p$-dimensional Brownian motion, which can be correlated with $Z$, the Brownian motion driving the log-price process,
such that $d\langle Z_i, B_j\rangle_t=\rho_{t,ij}dt$, where $\rho_{ij}$ is adapted c\`agl\`ad for all $i\in \{1,\ldots,d\}$ and 
$j \in \{1,\ldots,p\}$,
\item $b^X$ is an $\mathbb{R}^{d\times d}$-valued locally bounded predictable process,
\item $(Q^j)_{j \in \{1, \ldots, p\}}$ is an adapted c\`agl\`ad process taking values in $S_d$,
\item $\mu^X$ is the random measure associated with the jumps of $X$, whose compensator is given by $F_t(d\xi)dt$, where $F_t(d\xi)=F_t(\omega,d\xi)$ is for each $(\omega,t)$ a measure on $S_d$ such that the process
\begin{align}\label{eq:comproot}
\left(\int_{S_d} (\sqrt{X_{t-}+\xi}-\sqrt{X_{t-}}) F_t(d\xi)\right)_{t \geq 0}
\end{align} 
is locally bounded.
\end{itemize}
Moreover, both processes $X_t$ and $X_{t-}$ take their values in $S_d^{++}$, the set of all (strictly) positive semidefinite $d \times d$ matrices.
Furthermore, the drift process $b^Y$ of $Y$ is additionally assumed to be adapted and c\`agl\`ad.
\end{assumption*}

\begin{remark}
\begin{enumerate}
\item Assumption $(H1)$ corresponds essentially to~\cite[Assumption 4.4.3 (or $(K)$)]{JP:12} or~\cite[Assumption $(H1)$]{BNGJPS:06}, respectively. The main difference is that we require $X$ to be an It\^o-semimartingale, whereas in the above references this condition is explicitly stated for $\sqrt{X}$. Since we assume additionally that $X_t$ and $X_{t-}$ take values in $S_d^{++}$, $\sqrt{X}$ is again an It\^o-semimartingale. Local boundedness then also holds for the drift and for the compensator of the jumps of
$\sqrt{X}$, where the latter is a consequence of condition~\eqref{eq:comproot}.
The motivation to state Assumption $(H1)$ in terms of $X$ stems again from applications to $S_d^+$-valued affine processes, where
the characteristics of $\sqrt{X}$ would have a much more complicated form than the simple affine dependence on $X$. 
\item Also in view of affine processes we prefer the formulation in terms of a Brownian motion $B$, correlated with $Z$ instead of decomposing $B$ into $Z$ and another independent Brownian motion. 
\item Concerning the jump part we implicitly assume that the jumps of $X$ are of finite variation, whence we can avoid the introduction of a truncation function. This is not restrictive in our case, since in the central limit theorem below we shall assume finite jump activity.
\end{enumerate} 
\end{remark} 
 
\subsection{The Role of Pathwise Covariance Estimation in Robust Calibration}\label{sec:4}

As outlined in Section~\ref{sec:1}, one goal of robust calibration is to estimate quantities which do not change under equivalent measures, such as the volatility of volatility, from time series observations. This necessitates to first reconstruct the path of the instantaneous covariance in a robust way and then to infer the second order quantities from this estimate.
More precisely, the time series estimation part of robust calibration consists in conducting the following steps:
\begin{enumerate}
\item[(1)]\label{st:1} 
the first step is to reconstruct (estimate) \emph{non-parametrically} from \emph{discrete observations} along an equidistant time grid $ \Lambda_1 = \{ t^n_0 < \ldots < t^n_{\lfloor nT \rfloor}=T \} $ with step width $ \frac{1}{n} $ of the log-price process $ Y $ a discrete set of points on the trajectory of the instantaneous covariance process $X$ along a coarser, equidistant time grid $ \Lambda_2 \subset \Lambda_1 $ with an appropriately chosen step width $ \frac{1}{N} $:
\[
\textrm{from} \quad{(Y_t)}_{t \in \Lambda_1} \quad\textrm{to}\quad {(\widehat{X}_t)_{t \in \Lambda_2}} \, .
\]
\item[(2)]\label{st:2}  
using then the reconstructed path $\widehat{X}$ along the coarser grid $ \Lambda_2 $ allows in principle to repeat the first step, or at least to estimate integrated quantities of that discretely given trajectory, for instance to compute an estimator for the integrated covariance of $X$:
\[
\textrm{from} \quad {(\widehat{X}_t)_{t \in \Lambda_2}} \quad  \textrm{to}\quad \widehat{\int_0^T q_s ds},
\]
where $q$ is defined by $q_{iji'j'}=\sum_{k,l} Q_{ij}^k Q_{i'j'}^l$.
\item[(3)]\label{st:3} under some parametric specification of $X$, e.g., being an affine $S_d^+$-valued process, certain parameters associated to the covariance of $X$ and the correlation between $X$ and $Y$ can then be estimated from the previously defined estimators.
\end{enumerate}

In order to perform the first step (1), that is, the non-parametric pathwise covariance estimation, we rely on (a modification of) the Fourier method introduced in~\cite{MM:02, MM:09}. In order to describe its main idea, let us first introduce some notation: for an $L^1[0,T]$ function $f$ we denote its Fourier coefficients for $k \in \mathbb{Z}$ by
\[
\mathcal{F}(f)(k)= \frac{1}{T} \int_0^T f(t)e^{-\im\frac{2\pi}{T}kt}dt.
\]
The Fourier method is now best described by the following steps. Notice, however, that we could perform these steps for any orthonormal system in $ L^2([0,T]) $.

\begin{enumerate}
\item[(1a)]\label{st:1a}  Recover from a discrete observation of $Y$ an estimator for the Fourier coefficients of the components of the path $t \to \rho(X_t(\omega))$ for some continuous invertible function $\rho: S_d \to S_d$. In other words, find an estimator for 
\[
\mathcal{F}(\rho(X))(k)= \frac{1}{T} \int_0^T \rho(X_t)e^{-\im\frac{2\pi}{T}kt}dt \, .
\]
\item[(1b)] Use Fourier-F\'ejer inversion to reconstruct the path of $t \to \rho(X_t)$. In fact, by F\'ejer's theorem
\[
 \widehat{\rho(X)}^N_t:=\sum_{k=-N}^{N} \left(1-\frac{|k|}{N}\right) \mathcal{F}(\rho(X))(k)e^{\im \frac{2\pi}{T} kt}
\]
converges uniformly (and in $L^2$) to $t \mapsto \rho(X_{t})$ on $[0,T]$ if $t \to X_{t}$ is continuous. If $X$ has c\`adl\`ag paths, then the limit is given by $\frac{\rho(X_{t})+\rho(X_{t-})}{2}$. Due to central limit theorems on the fine grid $ \Lambda_1 $ we make errors in the reconstruction of $ \rho(X) $ of size
\[
\sum_{k=-N}^{N} \left(1-\frac{|k|}{N}\right) \mathcal{E}_k e^{\im \frac{2\pi}{T} kt} \, ,
\]
where $ \mathcal{E}_k $ is a sequence of error random variables, which are approximately conditionally Gaussian with variance of order $ \mathcal{O} \big( n^{-1} \big) $. Hence it does not make sense to use \emph{all} Fourier coefficients from $ - \lfloor nT \rfloor $ to $ \lfloor nT \rfloor $, but there will appear a subtle relationship between the sum of the errors, the size of $ N $ with respect to $ n $ and the rate of the  central limit theorem for the reconstruction. 
\item[(1c)] Invert the function $\rho$ to obtain an estimator $\widehat{X}$ of the realized path of $X$.
\end{enumerate}

For the second step (2), we can rely on existing estimators for the realized integrated covariance, into which we plug the estimator $\widehat{X}$ obtained in the first step. For similar approaches to estimate such second order quantities, however based on the classical local realized variance estimator, compare~\cite{V:12,MW:12}.

In the third step (3) we focus on particular parametric specifications of $X$. One particularly tractable class is the class of affine models. In this case the quadratic variation of $X$ satisfies
\[
\langle X^c_{ii}, X^c_{jj}\rangle_T =4\alpha_{ij}\int_0^T X_{s,ij} ds, \quad \alpha \in S_d^+,
\]
such that knowing an estimator for $\langle X^c_{ii}, X^c_{jj}\rangle_T$ and an estimator for
\[
\langle Y^c_{i}, Y^c_{j}\rangle_T =\int_0^T X_{s,ij} ds,
\]
namely $\int_0^T \widehat{X}_{s,ij} ds$ allows to estimate the parameter $\alpha_{ij}$.

\section{Main results}\label{sec:main_theorem}

In order to state the main results and to introduce the estimators, let us make some assumptions on the observations of the log-price process $Y$. 
Throughout let $T >0$ be fixed and suppose that the time grids of observations for all components of $Y$ in $[0,T]$ are equal and equidistant, i.e.,
\[
 t^n_m=\frac{m}{n}, \quad m=0, \ldots, \left\lfloor nT \right\rfloor.
\]
The increments of a process $Z$ with respect to the above time grid are denoted by $\Delta^n_m Z=Z_{t^n_{m}}-Z_{t^n_{m-1}}$.

\begin{remark}
If grids are non-equidistant and non-equal for different coordinates it might be wise to use estimators, whose input are more continuous quantities than increments, e.g., Fourier coefficients. This is outlined for instance in \cite{MM:02, MM:09}. In any case our method will provide as a result continuous path functionals such as Fourier coefficients after the first estimation procedure.
\end{remark}

\subsection{Consistency and a Central Limit Theorem for Estimators of the Fourier Coefficients}

In this section we focus on step (1a), i.e., on how the Fourier coefficients of $t \mapsto \rho(X_t)$ can be estimated from discrete observations of $Y$. Realizing that the only difference with respect to estimators for integrated (functions of the) realized covariance are the terms $e^{-\im\frac{2\pi}{T}kt}$  in the integral for the Fourier coefficients, we can make use of (Fourier basis modified) jump robust estimators like
\begin{itemize}
\item the power variation estimators considered by Barndorff-Nielsen et al.~\cite{BNGJPS:06},
\item estimators for the realized Laplace transform of volatility introduced by Tauchen and Todorov~\cite{TT:11} and 
\item other jump robust specifications, as for example considered in~\cite[Theorem 5.3.5]{JP:12}
\end{itemize}
The estimators for the Fourier coefficients that we consider are of the form
\begin{align}\label{eq:Fouriercofest}
 V(Y,g,k)_T^n=\frac{1}{n}\sum_{m=1}^{ \left\lfloor nT \right\rfloor} e^{-\im\frac{2\pi}{T} kt_{m-1}^n}g(\sqrt{n}\Delta_m^n Y),
\end{align}
for some function $g:\mathbb{R}^d \to S_d$ and we write 
\[
 V(Y,g)_T^{n,N}:=(V(Y,g,-N)_T^n, \ldots, V(Y,g,0)_T^n, \ldots, V(Y,g,N)_T^n)^{\top}.
\]
Note that the $0$-th Fourier coefficient $V(Y,g,0)_T^n$ corresponds to estimators for integrated (functions of the) covariance, as the power variation estimators, but also the realized Laplace transform estimator. Indeed in these cases the function $g$ is given by
\[
g: \mathbb{R}^d \to S_d, \quad (x_1, \ldots, x_d)^{\top} \mapsto (|x_i|^r|x_j|^s)_{i,j \in \{1, \ldots, d\}}, \quad r+s >0
\]
and
\[ 
 g: \mathbb{R}^d \to S_d, \quad (x_1, \ldots, x_d)^{\top} \mapsto (\cos(x_i+x_j))_{i,j \in \{1, \ldots, d\}}
\]
respectively.

\begin{remark}
Let us remark that the estimators for the Fourier coefficients of $t \mapsto X_t$ introduced in~\cite{MM:02, MM:09} are defined via the so-called Bohr convolution of the Fourier coefficients of $t \mapsto Y_t$. 
In the case of the $0$-th Fourier coefficient estimator, i.e., in the case of integrated covariance, this specification leads in particular to robustness towards microstructure noise due the presence of Dirichlet-kernel weighted auto-covariances. For jump processes it is, however, not obvious how to generalize the Fourier estimators based on the Bohr convolution. One possibility is to introduce a process $R$ via
\[
dR_t=\left(\sum_{m=1}^{ \left\lfloor nT \right\rfloor} g(\sqrt{n}\Delta_m^n Y)1_{[t^n_{m-1}, t^n_m)}(t) \right)dW_t,
\]
where $W$ is a Brownian motion independent of $Y$. Defining  estimators for the Fourier coefficients of $t \mapsto X_t$ via the Bohr convolution of the Fourier coefficients of $R$,  yields a similar expression as~\cite[Equation (24)]{MM:09} involving the Dirichlet kernel. In contrast to~\eqref{eq:Fouriercofest}, such an estimator would then enjoy similar properties as the one proposed in~\cite{MM:09}, in particular with respect to microstructure noise. The analysis of these estimators is beyond the scope of the paper.
\end{remark}

Our first aim is to study asymptotic properties of  $V(Y,g)_T^{n,N}$, for which we rely to a large extent on the results of~\cite{BNGJPS:06}, ~\cite{J:08} and~\cite{JP:12}. The following assumptions on the function $g$, needed to establish consistency and a central limit theorem, are also taken from~\cite{BNGJPS:06}:

\begin{assumption*}[J]
The function $g$ is continuous with at most polynomial growth.
\end{assumption*}

\begin{assumption*}[K]
The function $g$ is even and continuously differentiable with partial derivatives having at most polynomial growth.
\end{assumption*}

\begin{assumption*}[K']
The function $g$ is even, with at most polynomial growth and $C^1$ outside a subset $B$ of $\mathbb{R}^d$ which is a finite union of affine hyperplanes. With $d(x,B)$ denoting the distance between $x \in \mathbb{R}^d$ and $B$, we have for some $w \in (0,1]$ and $p \geq 0$
\begin{align*}
&x \in B^c \Rightarrow \| \nabla g(x)\| \leq C (1+\|x\|^p)\left(1+\frac{1}{d(x,B)^{1-w}}\right)\\
&x \in B^c, \; \|y\|\leq (1 \wedge \frac{d(x,B)}{2}) \\
&\quad \Rightarrow \|\nabla f(x+y)-\nabla f(x)\|\leq C\|y\|(1 +\|x\|^p+\|y\|^p)\left(1+\frac{1}{d(x,B)^{2-w}}\right).
\end{align*}
\end{assumption*}

\begin{remark}
The conditions of Assumption $(K')$ are especially designed to accommodate the functions
\[
(x_1, \ldots, x_d)^{\top} \mapsto (|x_i|^r|x_j|^{s})_{i,j \in \{1, \ldots, d\}}
\] 
for $r+s <1$, which correspond to the jump robust power variation estimators.
\end{remark}

In the case when $Y$ is a pure diffusion process the results of~\cite[Theorem 2.1. and Theorem 2.3]{BNGJPS:06} carry directly over to the Fourier basis modified statistics $V(Y,g,k)_T^n$. In the case of jumps, the respective assertions of~\cite[Theorem 3.4.1, Theorem 5.3.5 and Theorem 5.3.6]{JP:12} can also be directly transfered to $V(Y,g,k)_T^n$.
A sufficient condition which allows to incorporate jumps and which is also satisfied by the assumptions of the cited theorems, relates the function $g$ with the jump activity of (a localized version of) $Y$ and is stated in Assumption $(L(\eta))$ below.
Let us denote by $D^Y$ the diffusion part of $Y$ with respect to some truncation function $\chi'$, i.e.,
\begin{align}\label{eq:diffpart}
D^Y_t(\chi')=y+\int_0^t \left(b^Y_s +\int (\chi'(\xi)-\chi(\xi))K_s(d\xi)\right)ds + \int_0^t\sqrt{X_{s-}} dZ_s.
\end{align}
Then we shall require that the $L^1$-norm of $g(\sqrt{n}\Delta_m^n Y(p))- g(\sqrt{n}\Delta_m^n D^{Y(p)}(\chi'))$ goes sufficiently fast to $0$ uniformly in $m$, where $Y(p)$ denotes a localized version of $Y$.

\begin{assumption*}[$L(\eta)$]
Let $\chi'$ be a truncation function such that the modified drift of $Y$
\begin{align}\label{eq:moddrift}
b^Y(\chi')=b^Y +\int(\chi'(\xi)-\chi(\xi))K(d\xi)
\end{align}
is c\`agl\`ad. Moreover, suppose that there exists an increasing sequence of stopping times $(\tau_p)$ with $\lim_p \tau_p =\infty$ a.s. and processes $Y(p)$ such that
for $t< \tau_p$
\begin{align}\label{eq:Yloc}
Y(p)_t=Y_t \textrm{ a.s and } D_t^Y(\chi')=D_t^{Y(p)}(\chi') \textrm{ a.s.}
\end{align}
For $\eta \geq 0$, we then have for all $p$
\[
\lim_{n \to \infty} \sup_{1\leq m\leq \lfloor nT \rfloor}  n^{\eta}\mathbb{E}\left[\left\|g(\sqrt{n}\Delta_m^n Y(p))- g(\sqrt{n}\Delta_m^n D^{Y(p)}(\chi'))\right\|\right]=0.
\]
\end{assumption*}

In Corollary~\ref{cor:jumprobust} and Section~\ref{sec:jumpspec} below, we shall give precise examples of $g$ for which this condition is satisfied.
 
For the formulation of our first result we need some further notation: let $f: [0,T] \to S_d$ be some $L^1([0,T])$ function. Then we denote the $(2N+1)d \times d$ dimensional ``vector'' of Fourier coefficients by
\[
\mathcal{F}^N(f)=(\mathcal{F}(f)(-N),\ldots, \mathcal{F}(f)(0), \ldots, \mathcal{F}(f)(N))^{\top} \, .
\]
Moreover, for a function $h: \mathbb{R}^d \to \mathbb{R}^m$ and a $d$-dimensional normally distributed random variable $U$ with mean $0$ and covariance $X$, the first moment of $h(U)$ is denoted by $\rho_h(X)$, i.e.,
\[
 \rho_h(X)=\mathbb{E}\left[h(U)\right],\quad U\sim \mathcal{N}(0,X).
\]
By $\mathcal{F}(\rho_h(X))(k)$ we then mean
\[
\mathcal{F}(\rho_h(X))(k)= \frac{1}{T}\int_0^T \rho_h(X_t)e^{-\im\frac{2\pi}{T}kt}dt.
\]

\begin{theorem} \phantomsection\label{th:Fouriercofestconv}
\begin{enumerate}
\item Under Assumptions $(H)$, $(J)$ and $(L(0))$, we have
\[
 V(Y,g)_T^{n,N} \stackrel{\mathbb{P}}{\to} T\mathcal{F}^N(\rho_g(X)).
\]
\item \label{th:Fouriercofclt} Under the assumption $(H1)$ and $(K)$ or $(K')$ and $(L(\frac{1}{2}))$, the $\mathbb{C}^{(2N+1)d\times d}$-valued random variable
\[
\sqrt{n} \left(V(Y,g)_T^{n,N}- T\mathcal{F}^N(\rho_g(X))\right)
\]
converges for $n \to \infty$ stably in law to an  $\mathcal{F}$-conditional Gaussian random variable defined on an extension of the original probability space with mean $0$ 
and covariance 
\begin{align*}
C_{iji'j'}^{kk'}&:=\int_0^T \left(\rho_{g_{ij}g_{{i'}{j'}}}(X_s)-\rho_{g_{ij}}(X_s)\rho_{g_{i'j'}}(X_s)\right)e^{-\im\frac{2\pi}{T}(k-k')s} ds, \\
\end{align*}
where $i,j,i',j' \in \{1, \ldots, d\}$ and $k, k' \in \{-N, \ldots, N\}$.
\end{enumerate}
\end{theorem}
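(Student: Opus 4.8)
The plan is to reduce both assertions to the known limit theorems for integrated (functions of) realized covariance --- \cite[Theorems 2.1 and 2.3]{BNGJPS:06} in the pure diffusion case and \cite[Theorems 3.4.1, 5.3.5 and 5.3.6]{JP:12} in the presence of jumps --- by observing that the only difference between $V(Y,g,k)_T^n$ in \eqref{eq:Fouriercofest} and those statistics is the deterministic prefactor $w_m^{n,k}:=e^{-\im\frac{2\pi}{T}kt_{m-1}^n}$. This prefactor is $\mathcal F_{t_{m-1}^n}$-measurable, of modulus one, and is the value along the grid of a fixed continuous (hence Riemann integrable) function; consequently it is carried unchanged through every martingale and Riemann sum estimate and reappears only in the oscillating factors $e^{-\im\frac{2\pi}{T}ks}$ and $e^{-\im\frac{2\pi}{T}(k-k')s}$ of the limits. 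Throughout I would also exploit the conjugate symmetry $V(Y,g,-k)_T^n=\overline{V(Y,g,k)_T^n}$, valid because $g$ is $S_d$-valued, which lets one recover the whole (real) covariance structure of the complex vector $V(Y,g)_T^{n,N}$ from the single family $C_{iji'j'}^{kk'}$.

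First I would localize so that $b^Y$ and, under $(H1)$, also $b^X$, the processes $Q^j$, $\rho$, $X$, $X^{-1}$ and the relevant compensators are bounded; this affects neither convergence in probability nor stable convergence in law. Next, Assumption $(L(\eta))$ with $\eta=0$ for part (i) and $\eta=\tfrac12$ for part (ii) lets one replace $g(\sqrt n\Delta_m^nY)$ by $g(\sqrt n\Delta_m^nD^Y(\chi'))$, because $|w_m^{n,k}|=1$ forces the $L^1$-distance between the two statistics to be at most $T\sup_{1\le m\le\lfloor nT\rfloor}\mathbb E\|g(\sqrt n\Delta_m^nY(p))-g(\sqrt n\Delta_m^nD^{Y(p)}(\chi'))\|$, which by $(L(\eta))$ is $o(n^{-\eta})$. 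Hence it suffices to argue for the continuous It\^o semimartingale $D^Y:=D^Y(\chi')$ of \eqref{eq:diffpart}, whose diffusion part is $\int_0^\cdot\sqrt{X_{s-}}\,dZ_s$. For part (i) one then inserts the bounded measurable weight $w_m^{n,k}$ into the Riemann sums occurring in the proof of \cite[Theorem 2.1]{BNGJPS:06} (resp.\ \cite[Theorem 3.4.1]{JP:12}), using that $\sqrt n\Delta_m^nD^Y$ is, conditionally on $\mathcal F_{t_{m-1}^n}$, asymptotically $\mathcal N(0,X_{t_{m-1}^n})$ and that $(J)$ gives uniform integrability, to obtain
\[
\tfrac1n\sum_{m=1}^{\lfloor nT\rfloor} w_m^{n,k}\,g(\sqrt n\Delta_m^nD^Y)\ \stackrel{\mathbb{P}}{\to}\ \int_0^T e^{-\im\frac{2\pi}{T}kt}\rho_g(X_t)\,dt\ =\ T\,\mathcal F(\rho_g(X))(k)
\]
componentwise in $k$, which is part (i).

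For part (ii) I would set $U_m^n:=\sqrt n\,\sqrt{X_{t_{m-1}^n}}\,\Delta_m^nZ$, which is conditionally $\mathcal N(0,X_{t_{m-1}^n})$ given $\mathcal F_{t_{m-1}^n}$, and decompose
\[
\sqrt n\big(V(Y,g,k)_T^n-T\mathcal F(\rho_g(X))(k)\big)=M_T^{n,k}+R_1^{n,k}+R_2^{n,k},\qquad M_T^{n,k}:=\tfrac1{\sqrt n}\sum_{m=1}^{\lfloor nT\rfloor} w_m^{n,k}\big(g(U_m^n)-\rho_g(X_{t_{m-1}^n})\big),
\]
where $R_1^{n,k}$ is the error of replacing $\sqrt n\Delta_m^nD^Y$ by $U_m^n$ inside $g$ and $R_2^{n,k}$ is the Riemann sum error for $t\mapsto\rho_g(X_t)$. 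One shows $R_1^{n,k}\to0$ exactly as in the proof of \cite[Theorem 2.3]{BNGJPS:06}: under $(K)$ or $(K')$ --- in the latter the bound on $\nabla g$ near the exceptional hyperplanes $B$ being precisely what admits the power variation functions with $r+s<1$ --- the substitution errors are controlled at rate $\sqrt n$, the drift contribution being negligible because $g$ is even; and $R_2^{n,k}\to0$ because $t\mapsto\rho_g(X_t)$ is itself an It\^o semimartingale under $(H1)$, where $X_t,X_{t-}\in S_d^{++}$ ensures that $\rho_g$ is smooth in $X$. The main term $M_T^{n,k}$ is a triangular array of $\mathcal F_{t_m^n}$-martingale increments, to which I would apply the stable central limit theorem for such arrays (cf.\ \cite{JP:12}), checking: (a) since $g$ is real valued and $w_m^{n,k}\,\overline{w_m^{n,k'}}=e^{-\im\frac{2\pi}{T}(k-k')t_{m-1}^n}$, the conditional covariance of the $(k,ij)$ and $(k',i'j')$ coordinates equals $\tfrac1n\sum_m e^{-\im\frac{2\pi}{T}(k-k')t_{m-1}^n}\big(\rho_{g_{ij}g_{i'j'}}(X_{t_{m-1}^n})-\rho_{g_{ij}}(X_{t_{m-1}^n})\rho_{g_{i'j'}}(X_{t_{m-1}^n})\big)+o(1)$, a Riemann sum converging to $C_{iji'j'}^{kk'}$; (b) a Lyapunov condition of order $2+\varepsilon$, immediate from polynomial growth of $g$ and Gaussian moment bounds; (c) the asymptotic orthogonality $\sum_m\mathbb E[\tfrac1{\sqrt n}w_m^{n,k}(g(U_m^n)-\rho_g(X_{t_{m-1}^n}))\,\Delta_m^nN\mid\mathcal F_{t_{m-1}^n}]\to0$ for $N=Z$ --- which holds exactly, since $g$ even makes $g(U_m^n)$ an even function of the symmetric increment $\Delta_m^nZ$ --- and for every bounded martingale $N$ orthogonal to $Z$. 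Conditions (a)--(c) give stable convergence to an $\mathcal F$-conditionally Gaussian limit with covariance $C_{iji'j'}^{kk'}$.

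I expect the main obstacle to be the uniform $o(n^{-1/2})$ control of the replacement error $R_1^{n,k}$ under the weak smoothness of $(K')$: the singularity of $\nabla g$ along the hyperplanes $B$ has to be integrated against the nearly Gaussian conditional law of $\sqrt n\Delta_m^nD^Y$ at exactly the right rate, which is the technical heart of \cite[Theorem 2.3]{BNGJPS:06} and \cite[Section 5.3]{JP:12}. A secondary point is keeping the complex and oscillatory bookkeeping consistent, so that the cross-mode conditional covariance collapses to the single factor $e^{-\im\frac{2\pi}{T}(k-k')s}$ while the ``pseudo-covariance'' $\mathbb E[M_T^{n,k}M_T^{n,k'}]$ is absorbed through the conjugate symmetry (replacing $k'$ by $-k'$) rather than appearing as an independent limiting quantity. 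Both difficulties are purely local, per-increment estimates identical to those in the cited works; since the Fourier weights $w_m^{n,k}$ never enter them, the argument is in essence a careful transcription of \cite{BNGJPS:06,JP:12} with deterministic weights inserted.
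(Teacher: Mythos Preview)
Your proposal is correct and follows essentially the same route as the paper: localize, use $(L(\eta))$ to pass from $Y$ to its continuous part $D^Y(\chi')$, and then decompose into a martingale array term $M_T^{n,k}$ (the paper's $\sum_m\eta_m^{n,1}$, with $U_m^n=\beta_m^n$), a diffusion-replacement error $R_1^{n,k}$ (the paper's $\sum_m\eta_m^{n,31}$), and a Riemann-sum error $R_2^{n,k}$ (the paper's $\sum_m\eta_m^{n,2}$), handling the first by the stable CLT for triangular arrays exactly as in \cite[Proposition 4.1]{BNGJPS:06}/\cite[Theorem 4.2.1]{JP:12} and the others by the cited estimates in \cite{BNGJPS:06} and \cite[Section 5.3.3]{JP:12}. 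Your identification of the conditional covariance computation, the role of $g$ being even for orthogonality to $Z$, and the technical crux under $(K')$ all match the paper's argument.
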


\begin{remark}\phantomsection\label{rem:Fouriercoeff}
 \begin{enumerate}
\item \label{rem:it1} Stable convergence in law for a sequence of random variables $(U_n)$ to a limit $U$ 
(defined on an extension of $(\Omega, \mathcal{F}, \mathbb{P})$) means that, for any bounded continuous function $f$ and any bounded $\mathcal{F}$-measurable random variable $V$, we have
\[
\lim_{n \to \infty}\mathbb{E}\left[Vf(U_n)\right]=\mathbb{E}\left[Vf(U)\right].
\]
\item The above convergence results do not only hold for $T$ fixed, but we have 
\[
 V(Y,g)_T^{n,N} \stackrel{\mathbb{P}}{\to} T\mathcal{F}^N(\rho_g(X))
\]
locally uniformly in $T$ and also stable convergence process-wise.\footnote{Here, $\mathcal{F}(f)(k)$ is defined for variable $T$.} The latter means that
\[
\sqrt{n} \left(V(Y,g)_T^{n,N}- T\mathcal{F}^N(\rho_g(X))\right)
\]
converges stably in law to a process $U(g,N)$ given componentwise by 
 \begin{align}\label{eq:UgN}
  U(g,N)_{ij,T}^k=\sum_{k'=1}^{2(2N+1)}\sum_{i',j'=1}^{d}\int_0^T \delta_{s,ij, i'j'}^{kk'} dW^{k'}_{s,i'j},
 \end{align}
where
\[
\sum_{r=1}^{2(2N+1)}\sum_{p,q=1}^{d}\int_0^T   \delta_{s,ij,pq}^{kr} \overline{\delta_{s,i'j',pq}^{k'r}}=C_{iji'j'}^{kk'}.
\]
Here, $W$ is a $2(2N+1)d \times d$-dimensional Brownian motion which is defined on an extension of the probability space 
 $(\Omega, \mathcal{F}, (\mathcal{F}_t)_{t \geq 0}, \mathbb{P})$ and is independent of the $\sigma$-field $\mathcal{F}$.
\item The above theorem has been proved in~\cite{BNGJPS:06} in a pure diffusion setting and $k=0$. Inclusion of jumps has been considered (in the one-dimensional case) in~\cite{BNSW:06} and~\cite{W:06} for $g=|x|^r$ and in~\cite{TT:11} for $g=\cos(x)$. 
More general functions (also for the case $k=0$) are treated in~\cite{J:08} and~\cite[Theorem 3.4.1, Theorem 5.3.5 and Theorem 5.3.6]{JP:12}.
\item 
In the examples  $g(x)=|x|^r$ and $g(x)=\cos(x)$, the function $\rho_g(x)$ corresponds to
\[
 \rho_{(x \mapsto |x|^r)}(x)=|x|^\frac{r}{2}\mathbb{E}\left[|U|^r\right], \, U \sim \mathcal{N}(0,1)
\]
and 
\[
 \rho_{(x \mapsto \cos(x))}(x)=e^{-\frac{1}{2}x} \, ,
\]
respectively.
\end{enumerate}
\end{remark}

In the following corollary we specify classes of functions $g$ and conditions on the jumps such that condition $(L(0))$ or $(L(\frac{1}{2}))$, respectively, is satisfied and the estimator $V(Y,g,k)_T^n$ given in~\eqref{eq:Fouriercofest} is robust to jumps. These conditions are in line with the respective assumptions in~\cite[Theorem 3.4.1 (a), Theorem 5.3.5 ($\gamma$) and Theorem 5.3.6, Equation 5.3.11]{JP:12}.

\begin{corollary}\phantomsection\label{cor:jumprobust}
\begin{enumerate}
\item\label{cor:jumprobust1}
Let $g$ be continuous with $g(x)=o(\|x^2\|)$ as $\|x \| \to \infty$. Then under assumption $(H)$ we have 
\[
 V(Y,g)_T^{n,N} \stackrel{\mathbb{P}}{\to} T\mathcal{F}^N(\rho_g(X)).
\]
\item\label{cor:jumprobust2}
Suppose that $g$ satisfies for some $q\geq 0$ and some $0< r \leq r'<1$
\begin{align*}
\|g(x)-g(y)\|\leq C (1+\|y\|^q)(\|x-y\|^{r}+\|x-y\|^{r'}).
\end{align*}
and assume that $\mathbb{E}\left[\int_{\|\xi\|> 1}\|\xi\| K_{t}(d\xi)\right]< \infty $ holds true.
Moreover, let $\beta \in [0,1)$ and assume that for all $t \in [0,T]$
 \begin{align*}
 \mathbb{E}\left[\int_{\|\xi\|\leq 1}\|\xi\|^{\beta} K_{t}(d\xi)\right]& < \infty,
 \end{align*}
and that 
\[
b^Y(0)=b^Y-\int \chi(\xi)K(d\xi)
\]
is c\`agl\`ad. Then under the assumptions $(H1)$ and $(K)$ or $(K')$ and $\frac{\beta}{2-\beta} < r <1$, 
the central limit theorem of Theorem~\ref{th:Fouriercofestconv}~\ref{th:Fouriercofclt} holds true.
\end{enumerate}
\end{corollary}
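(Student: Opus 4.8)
We deduce both assertions from \thmref{th:Fouriercofestconv}. Under the hypotheses of the corollary, Assumptions $(H)$ and $(J)$ hold in case~\ref{cor:jumprobust1}, and $(H1)$ and $(K)$ or $(K')$ hold in case~\ref{cor:jumprobust2}, so in each case it only remains to verify the jump condition: $(L(0))$ for~\ref{cor:jumprobust1} and $(L(\tfrac12))$ for~\ref{cor:jumprobust2}. Let $(\tau_p)$, $Y(p)$ be the localizing sequence from $(L(\eta))$. In case~\ref{cor:jumprobust2} we take the truncation function $\chi'\equiv0$; this is admissible in $(L(\tfrac12))$ exactly because $b^Y(0)=b^Y-\int\chi(\xi)K(d\xi)$ is assumed c\`agl\`ad. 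Moreover, $\beta<1$ forces $\|\xi\|\le\|\xi\|^{\beta}$ on $\{\|\xi\|\le1\}$, so $\int_{\|\xi\|\le1}\|\xi\|K_t(d\xi)\le\int_{\|\xi\|\le1}\|\xi\|^{\beta}K_t(d\xi)<\infty$, which together with $\mathbb{E}[\int_{\|\xi\|>1}\|\xi\|K_t(d\xi)]<\infty$ shows the jumps of $Y$ to be of finite variation; hence, with $D^{Y(p)}_t:=D^{Y(p)}_t(0)$ as in \eqref{eq:diffpart},
\[
\Delta_m^n Y(p)-\Delta_m^n D^{Y(p)}=\Delta_m^n J(p),\qquad J(p)_t:=Y(p)_t-D^{Y(p)}_t=\int_0^t\!\!\int_{\mathbb{R}^d}\xi\,\mu^{Y(p)}(d\xi,ds),
\]
is precisely the \emph{uncompensated} sum of the jumps of $Y(p)$ over the mesh interval — in particular it carries no $\mathcal{O}(1/n)$ compensator drift. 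For case~\ref{cor:jumprobust1} we fix any truncation function $\chi'$ as required in $(L(0))$. In both cases the relevant drift and $X$ are locally bounded, so $\sup_m\mathbb{E}[\|\sqrt n\Delta_m^n D^{Y(p)}\|^s]\le C_{s,p}$ for all $s\ge0$.

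\emph{Part~\ref{cor:jumprobust1}.} Continuity of $g$ is assumed and the growth $g(x)=o(\|x\|^2)$ is precisely what enters the consistency step of \thmref{th:Fouriercofestconv}\,(i) (cf.\ \cite[Theorem~3.4.1]{JP:12}), so it remains to check $(L(0))$. Fix $A>0$ large and split $\mathbb{E}[\|g(\sqrt n\Delta_m^n Y(p))-g(\sqrt n\Delta_m^n D^{Y(p)})\|]$ over the event $\{\|\sqrt n\Delta_m^n Y(p)\|\vee\|\sqrt n\Delta_m^n D^{Y(p)}\|\le A\}$ and its complement. On the event, uniform continuity of $g$ on the ball of radius $A$ together with $\Delta_m^n Y(p)-\Delta_m^n D^{Y(p)}\stackrel{\mathbb{P}}{\to}0$ (a small-interval increment over a mesh of width $1/n$) drives the integrand to $0$; on the complement, $g(x)=o(\|x\|^2)$ gives the bound $\varepsilon(A)\,(n\|\Delta_m^n Y(p)\|^2+n\|\Delta_m^n D^{Y(p)}\|^2+1)$ with $\varepsilon(A)\downarrow0$, and one concludes using $\sup_m\mathbb{E}[n\|\Delta_m^n Y(p)\|^2\wedge1]\le C_p$ and $\sup_m\mathbb{P}[\|\sqrt n\Delta_m^n Y(p)\|>A]\le C_pA^{-2}$. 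Letting $n\to\infty$ then $A\to\infty$ yields $(L(0))$, and \thmref{th:Fouriercofestconv}\,(i) gives the claim.

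\emph{Part~\ref{cor:jumprobust2}.} The postulated modulus of continuity of $g$, applied with $x=\sqrt n\Delta_m^n Y(p)$, $y=\sqrt n\Delta_m^n D^{Y(p)}$, $x-y=\sqrt n\Delta_m^n J(p)$, gives
\[
\mathbb{E}\big[\|g(\sqrt n\Delta_m^n Y(p))-g(\sqrt n\Delta_m^n D^{Y(p)})\|\big]\le C\,\mathbb{E}\big[(1+\|\sqrt n\Delta_m^n D^{Y(p)}\|^q)\big(\|\sqrt n\Delta_m^n J(p)\|^{r}+\|\sqrt n\Delta_m^n J(p)\|^{r'}\big)\big].
\]
Separating the continuous-increment factor from the jump increment as in \cite{JP:12} (the conditional moments of $1+\|\sqrt n\Delta_m^n D^{Y(p)}\|^q$ being bounded uniformly in $m,n$), verifying $(L(\tfrac12))$ reduces to $n^{(1+r)/2}\sup_m\mathbb{E}[\|\Delta_m^n J(p)\|^{r}]\to0$ and the analogue with $r'$; we treat $r$. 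Split the jumps of $J(p)$ at a level $\varepsilon_n=n^{-a}$, $a>0$. For jumps with $\|\xi\|>\varepsilon_n$, subadditivity of $t\mapsto t^{r}$ on $[0,\infty)$ and the compensation formula yield $\mathbb{E}[\|\cdot\|^{r}]\le\tfrac1n\sup_t\mathbb{E}[\int_{\|\xi\|>\varepsilon_n}\|\xi\|^{r}K_t(d\xi)]\le Cn^{-1}(1+\varepsilon_n^{(r-\beta)\wedge0})$ (using $\int_{\|\xi\|>1}\|\xi\|K<\infty$ for the large jumps, and $\|\xi\|^{r}\le\varepsilon_n^{(r-\beta)\wedge0}\|\xi\|^{\beta}$ on $\{\varepsilon_n<\|\xi\|\le1\}$); for jumps with $\|\xi\|\le\varepsilon_n$, an $L^2$-estimate together with $\int_{\|\xi\|\le\varepsilon_n}\|\xi\|^2K(d\xi)\le\varepsilon_n^{2-\beta}\int\|\xi\|^{\beta}K(d\xi)$ yield $\mathbb{E}[\|\cdot\|^{r}]\le C(n^{-1}\varepsilon_n^{2-\beta})^{r/2}$. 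Hence $n^{(1+r)/2}\mathbb{E}[\|\Delta_m^n J(p)\|^{r}]\le Cn^{(r-1)/2}(1+\varepsilon_n^{(r-\beta)\wedge0})+Cn^{1/2}\varepsilon_n^{r(2-\beta)/2}$, which tends to $0$ provided $a$ can be chosen with $\tfrac1{r(2-\beta)}<a<\tfrac{1-r}{2(\beta-r)}$ when $r<\beta$ (no upper bound being needed when $r\ge\beta$); an elementary computation shows such $a$ exists if and only if $\beta(1+r)<2r$, i.e.\ $\tfrac{\beta}{2-\beta}<r$, which holds by hypothesis. The bound with $r'$ is identical, so $(L(\tfrac12))$ holds and \thmref{th:Fouriercofestconv}\,\ref{th:Fouriercofclt} applies.

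\emph{Main obstacle.} The substantive step is the quantitative verification of $(L(\tfrac12))$ in the infinite-activity regime $\tfrac{\beta}{2-\beta}<r<\beta<1$: the truncation level $\varepsilon_n$ must be large enough to control the medium-jump contribution $\propto n^{(r-1)/2}\varepsilon_n^{r-\beta}$ and small enough to control the residual small-jump contribution $\propto n^{1/2}\varepsilon_n^{r(2-\beta)/2}$, and it is precisely this trade-off that forces the threshold $\tfrac{\beta}{2-\beta}<r$. A related point is that the comparison must be made with $D^Y(0)$ and not $D^Y(\chi)$: with a nonzero truncation, $\Delta_m^n J(p)$ would retain an $\mathcal{O}(1/n)$ compensator drift, producing an irreducible term of order $n^{1/2}(\sqrt n\cdot n^{-1})^{r}=n^{(1-r)/2}\to\infty$ — this is the reason c\`agl\`ad-ness of $b^Y(0)$ is imposed. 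The remaining ingredients — a priori moment bounds on the increments over a single mesh interval and the separation of the polynomial weight from the jump increment — are standard, and the hypotheses of the corollary are tailored to match those of \cite[Theorem~3.4.1(a), Theorem~5.3.5\,($\gamma$), Theorem~5.3.6, Eq.~(5.3.11)]{JP:12}, which makes the transfer routine.
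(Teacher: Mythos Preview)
Your argument is correct and follows the same overall strategy as the paper: both parts reduce to verifying the jump condition $(L(0))$ resp.\ $(L(\tfrac12))$ and then invoking Theorem~\ref{th:Fouriercofestconv}. For part~\ref{cor:jumprobust1} your sketch is exactly the content of \cite[Lemma~3.4.6]{JP:12}, which the paper likewise cites (via its Proposition~\ref{prop:jumprobust}\,\ref{prop:jumprobustLLN}).

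For part~\ref{cor:jumprobust2} your technical route differs from the paper's. The paper (Proposition~\ref{prop:jumprobust}\,\ref{prop:jumprobustCLT}) splits the jumps at the \emph{fixed} level $1$ and, for the small jumps, uses subadditivity $\|\sum x_i\|^{\alpha\vee\beta}\le\sum\|x_i\|^{\alpha\vee\beta}$ (together with a H\"older step from $\alpha$ to $\beta$ when $\alpha<\beta$) to obtain the rates $n^{-(1/\kappa - r'/2)}$ (case $\beta\le r$) and $n^{-r(2-\beta)/(2\beta)}$ (case $\beta>r$). You instead split at a \emph{variable} level $\varepsilon_n=n^{-a}$ and control the small-jump remainder by an $L^2$-bound, then optimize over $a$. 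Both routes are standard and lead to the same threshold $\beta/(2-\beta)<r$; your approach has the virtue of making the trade-off that forces this threshold completely explicit, while the paper's packages the estimates into a reusable proposition.

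One imprecision worth flagging: the ``separation of the continuous-increment factor from the jump increment'' cannot be done by conditioning, since $\Delta_m^n D^{Y(p)}$ and $\Delta_m^n J(p)$ are not conditionally independent. The correct tool (and what the paper uses) is H\"older's inequality, which replaces your target $n^{(1+r)/2}\mathbb{E}[\|\Delta_m^n J(p)\|^r]$ by $n^{(1+r)/2}\mathbb{E}[\|\Delta_m^n J(p)\|^{r\kappa}]^{1/\kappa}$ for some $\kappa>1$. This does not affect your conclusion: since $\beta/(2-\beta)<r$ is strict, one may take $\kappa$ close enough to $1$ that your $\varepsilon_n$-argument still goes through (effectively replacing $r$ by $r\kappa<1$ in the exponents). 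Your observation that the truncation $\chi'\equiv 0$ is essential---otherwise the residual compensator drift produces a divergent term of order $n^{(1-r)/2}$---is correct and matches the paper's choice $\chi'=\chi 1_{\{\beta>1\}}=0$ for $\beta<1$.
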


\begin{remark}
\begin{enumerate}
\item 
The specifications 
\begin{align}\label{eq:powervariationjump}
g: \mathbb{R}^d \to S_d, \quad (x_1, \ldots, x_d)^{\top} \mapsto (|x_ix_j|^{\frac{r}{2}})_{i,j \in \{1, \ldots, d\}}
\end{align}
for $r <1$ and
\[ 
 g: \mathbb{R}^d \to S_d, \quad (x_1, \ldots, x_d)^{\top} \mapsto (\cos(x_i+x_j))_{i,j \in \{1, \ldots, d\}}
\]
are covered by these conditions. In the case of~\eqref{eq:powervariationjump}, the above corollary recovers~\cite[Theorem 1~(iii)]{BNSW:06}, which has been proved for one dimensional jump diffusions where the jumps are described by a L\'evy process.
For functions satisfying~\eqref{eq:growth} a similar statement is proved in~\cite[Theorem 5.3.5 ($\gamma$) and Theorem 5.3.6]{JP:12}, however, under slightly different conditions on the jump measures (in particular, supposing the Gringelions representation of $Y$).
\item Another function which satisfies for example the above requirements and for which $\rho_g$ is invertible and easily computable is
\[
g: \mathbb{R}^d \to S_d, \quad (x_1, \ldots, x_d)^{\top} \mapsto \left(e^{-\frac{\langle x, A_{ij} x\rangle}{2}}\right)_{ij\in \{1, \ldots, d\}},
\]  
where  $A_{ij}=e_ie_i^{\top}+e_je_j^{\top}+e_ie_j^{\top}+e_je_i^{\top}$ and $e_i$ denotes the canonical basis vector.
The function $\rho_g$ is given by  
\[
\rho_{g_{ij}}(X)=\frac{1}{\sqrt{X_{ii}+2X_{ij}+X_{jj}+1}}.
\]
\end{enumerate}
\end{remark}

\subsection{Asymptotic Properties of Estimators for (Functions of) the Instantaneous Covariance Process}

We now focus on step (1b) and (1c), that is, we are interested in establishing consistency and a central limit theorem for an estimator of $\rho_{g}(X_t)$ 
and $X_t$ respectively. The estimator for  $\rho_{g}(X_t)$ is defined via Fourier-F\'ejer inversion using the above estimators for the Fourier coefficients:
\begin{align}\label{eq:estimatorrhoX}
\widehat{\rho_{g}(X)}^{n,N}_{t}=\frac{1}{T}\sum_{k=-N}^{N} \left(1-\frac{|k|}{N}\right)e^{\im \frac{2\pi}{T} kt}V(Y,g,k)_T^{n}.
\end{align}
Once we have obtained a consistency and a central limit theorem for this estimator, we can translate these results 
to an estimator for $X_t$, which we define via
\begin{align}\label{eq:estimatorX}
\widehat{X}_t^{n,N}:=\rho^{-1}_g\left(\widehat{\rho_{g}(X)}^{n,N}_{t}\right)
\end{align}
provided that $\rho_{g}(x): S_d \times S_d,\, x \mapsto \rho_g(x)$ is invertible. 

\subsubsection{Consistency}

Let us start with the consistency statements.

\begin{theorem}\label{th:consistency}
Let $\gamma > 1$ and suppose that $\lim \frac{n}{N^{\gamma}}=K$ for some constant $K >0$. Under the assumptions $(H)$, $(J)$ and $(L(0))$
we have for every $t \in [0,T]$
\[
 \widehat{\rho_{g}(X)}^{n,N}_{t} \stackrel{\mathbb{P}}{\to} \frac{\rho_g(X_{t-})+\rho_g(X_t)}{2}
\]
as $n, N \to \infty$. If $X$ has no fixed time of discontinuity, then 
\[
 \widehat{\rho_{g}(X)}^{n,N}_{t} \stackrel{\mathbb{P}}{\to}\rho_g(X_t).
\]
\end{theorem}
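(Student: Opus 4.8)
The plan is to split the estimator into the Fej\'er mean of the genuine trajectory $s\mapsto\rho_g(X_s)$ and a stochastic error coming from estimating its Fourier coefficients. Write $K_N(s):=\frac1T\sum_{k=-N}^{N}\left(1-\frac{|k|}{N}\right)e^{\im\frac{2\pi}{T}ks}$ for the $T$-periodic Fej\'er kernel and $\sigma_N[f](t):=\int_0^TK_N(t-s)f(s)\,ds=\sum_{k=-N}^{N}\left(1-\frac{|k|}{N}\right)e^{\im\frac{2\pi}{T}kt}\mathcal F(f)(k)$ for the associated Fej\'er operator. Interchanging the two finite sums in \eqref{eq:estimatorrhoX}, using \eqref{eq:Fouriercofest}, gives the two equivalent representations
\[
\widehat{\rho_{g}(X)}^{n,N}_{t}=\sigma_N[\rho_g(X)](t)+E^{n,N}_t=\frac1n\sum_{m=1}^{\lfloor nT\rfloor}K_N\!\left(t-t^n_{m-1}\right)g\!\left(\sqrt{n}\Delta^n_mY\right),
\]
where $E^{n,N}_t:=\frac1T\sum_{k=-N}^{N}\left(1-\frac{|k|}{N}\right)e^{\im\frac{2\pi}{T}kt}\left(V(Y,g,k)^n_T-T\mathcal F(\rho_g(X))(k)\right)$. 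It thus suffices to prove (a) that pathwise $\sigma_N[\rho_g(X)](t)\to\frac{\rho_g(X_{t-})+\rho_g(X_t)}{2}$, and (b) that $E^{n,N}_t\stackrel{\mathbb P}{\to}0$ as $n,N\to\infty$ with $n/N^{\gamma}\to K$.

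Step (a) is deterministic, $\omega$ by $\omega$: since $X$ is c\`adl\`ag and $\rho_g$ continuous, $s\mapsto\rho_g(X_s)$ is a bounded c\`adl\`ag function on $[0,T]$, in particular in $L^1([0,T])$ and possessing the one-sided limits $\rho_g(X_{t-})$ and $\rho_g(X_{t+})=\rho_g(X_t)$ at $t$. Fej\'er's theorem for functions with one-sided limits then yields $\sigma_N[\rho_g(X)](t)\to\frac12\left(\rho_g(X_{t-})+\rho_g(X_t)\right)$. If $X$ has no fixed time of discontinuity then $X_{t-}=X_t$ a.s., so the limit equals $\rho_g(X_t)$.

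Step (b) is the analytic core, and this is where the rate $\gamma>1$ (that is, $N=o(n)$) enters. I would follow the proof of the first part of \thmref{th:Fouriercofestconv}: localise and, invoking $(L(0))$, replace $Y$ by its continuous part $D^Y$ of \eqref{eq:diffpart}; then approximate $\sqrt{n}\Delta^n_mD^Y$ by a conditionally centred Gaussian with covariance $X_{t^n_{m-1}}$. This produces the decomposition
\[
g\!\left(\sqrt{n}\Delta^n_mY\right)-\rho_g(X_{t^n_{m-1}})=\zeta^n_m+\beta^n_m+\xi^n_m,
\]
with $\zeta^n_m$ a martingale difference for $(\mathcal F_{t^n_m})$ satisfying $\Econd{\|\zeta^n_m\|^2}{\mathcal F_{t^n_{m-1}}}\le C$, $\beta^n_m$ a bias with $\sup_m\|\beta^n_m\|\to0$, and $\xi^n_m$ the jump-removal remainder with $\frac1n\sum_m\mathbb E\|\xi^n_m\|\to0$ by $(L(0))$; to this one adds the discretisation error $\frac1n\sum_mK_N(t-t^n_{m-1})\rho_g(X_{t^n_{m-1}})-\sigma_N[\rho_g(X)](t)$. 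Each piece, multiplied by $\frac1nK_N(t-t^n_{m-1})$ and summed, is then estimated by trading the growth of the kernel against the mesh via the classical bounds $\|K_N\|_{L^1[0,T]}=1$, $\|K_N\|_\infty\le CN$, $\|K_N\|^2_{L^2[0,T]}\le CN$ and $\mathrm{TV}_{[0,T]}(K_N)\le CN$: the fluctuation contributes, by orthogonality of martingale differences, $\mathbb E\big|\frac1n\sum_mK_N(t-t^n_{m-1})\zeta^n_m\big|^2\le\frac Cn\cdot\frac1n\sum_mK_N(t-t^n_{m-1})^2\le\frac Cn\|K_N\|_\infty\left(\|K_N\|_{L^1}+\frac1n\mathrm{TV}(K_N)\right)=O(N/n)\to0$, which is exactly where $\gamma>1$ is used; the bias and jump-removal terms are bounded by $\left(\sup_m\|\beta^n_m\|+\frac1n\sum_m\mathbb E\|\xi^n_m\|\right)\left(\|K_N\|_{L^1}+\frac1n\mathrm{TV}(K_N)\right)\to0$; and the discretisation error splits into a $K_N$-increment part dominated by $\frac1n\mathrm{TV}(K_N)\to0$ and a part $\int_0^T|K_N(t-s)|\,\|\rho_g(X_{\lfloor ns\rfloor/n})-\rho_g(X_s)\|\,ds$, which one controls using $\|K_N\|_{L^1}=1$ on the uniformly continuous (small-jump) part of the path of $X$ and $\|K_N\|_\infty\le CN$ on the $O(1/n)$-wide windows straddling the finitely many large jumps.

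The main obstacle is precisely this last bookkeeping: one must keep every error contribution at order $N/n$ or $o(1)$, which forces one to cancel each factor $N$ generated by the kernel against a factor $1/n$ from the mesh width through the total-variation estimate — this works exactly when $\gamma>1$ — and in particular the discretisation error $\frac1n\sum_mK_N(t-t^n_{m-1})\rho_g(X_{t^n_{m-1}})-\sigma_N[\rho_g(X)](t)$ must be handled with care near the jump times of $X$, where the kernel is of size $N$ while the sampled path has plateaus of width $O(1/n)$ and height comparable to the jumps.
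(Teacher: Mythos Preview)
Your decomposition and the paper's are essentially the same: both write $\widehat{\rho_g(X)}^{n,N}_t$ as the Fej\'er mean of the true path $s\mapsto\rho_g(X_s)$ (handled, $\omega$ by $\omega$, by Fej\'er's theorem for c\`adl\`ag functions) plus a stochastic error, which is then split into a martingale-difference piece (your $\zeta^n_m$, the paper's $E^{n,N}_2$), a diffusion-approximation bias (your $\beta^n_m$, one half of the paper's $E^{n,N}_1$), a jump-removal remainder (your $\xi^n_m$, the other half of $E^{n,N}_1$), and a Riemann discretisation error ($E^{n,N}_3$). Where the paper invokes its lemma on Fej\'er-kernel Riemann sums, you use the equivalent kernel bounds $\|K_N\|_{L^1}=1$, $\|K_N\|_\infty\le CN$, $\mathrm{TV}(K_N)\le CN$ directly; your treatment of the discretisation error near the jumps of $X$ via the c\`adl\`ag $\varepsilon$-partition is in fact more explicit than the paper's one-line appeal to ``Riemann integrability''.

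One slip to fix: for the jump-removal term you record only $\frac1n\sum_m\EE\|\xi^n_m\|\to0$ and then bound the weighted sum by the product $\bigl(\frac1n\sum_m\EE\|\xi^n_m\|\bigr)\bigl(\|K_N\|_{L^1}+\frac1n\mathrm{TV}(K_N)\bigr)$. This factorisation does not follow, because $K_N$ concentrates mass of order $N$ near $t$ while being $O(1)$ only on average. What $(L(0))$ actually delivers (after the localisation you already invoke) is the stronger $\sup_m\EE\|\xi^n_m\|\to0$, and with that the correct estimate
\[
\EE\Bigl|\tfrac1n\sum_m K_N(t-t^n_{m-1})\,\xi^n_m\Bigr|\le\bigl(\sup_m\EE\|\xi^n_m\|\bigr)\Bigl(\tfrac1n\sum_m K_N(t-t^n_{m-1})\Bigr)\to 0
\]
goes through, exactly as in the paper. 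Similarly, your $\sup_m\|\beta^n_m\|\to0$ should be read in $L^1$ (or $L^2$), not pathwise; the relevant input is $\sup_m\EE\|g(\sqrt n\,\Delta^n_mD^Y)-g(\beta^n_m)\|\to0$.
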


The following corollary states explicit conditions on $g$ and the jumps of $Y$ such that $(L(0))$ is satisfied and relies on Proposition~\ref{prop:jumprobust}~\ref{prop:jumprobustLLN} below.

\begin{corollary}\label{cor:consistency}
Let $g$ be continuous with $g(x)=o(\|x^2\|)$ as $\|x \| \to \infty$. Let $\gamma > 1$ and suppose that $\lim \frac{n}{N^{\gamma}}=K$ for some constant $K>0$. Then under assumption $(H)$ we have for every $t \in [0,T]$
\[
 \widehat{\rho_{g}(X)}^{n,N}_{t} \stackrel{\mathbb{P}}{\to} \frac{\rho_g(X_{t-})+\rho_g(X_t)}{2}
\]
as $n, N \to \infty$.
 If $X$ has no fixed time of discontinuity, then 
\[
 \widehat{\rho_{g}(X)}^{n,N}_{t} \stackrel{\mathbb{P}}{\to}\rho_g(X_t).
\]
\end{corollary}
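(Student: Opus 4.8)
The plan is to derive Corollary~\ref{cor:consistency} from Theorem~\ref{th:consistency} by verifying that the stated hypotheses on $g$ imply Assumption $(L(0))$, so that Theorem~\ref{th:consistency} applies verbatim. Concretely, I would invoke the promised Proposition~\ref{prop:jumprobust}~\ref{prop:jumprobustLLN}, whose role is precisely to show that for a continuous $g$ with $g(x)=o(\|x\|^2)$ as $\|x\|\to\infty$, Assumption $(L(0))$ holds under $(H)$. The continuity together with polynomial (indeed sub-quadratic) growth immediately gives Assumption $(J)$ as well. Thus once these two implications are in place, the conclusion is nothing more than an instance of Theorem~\ref{th:consistency} with the same $\gamma$, $N$, $n$, $K$.

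In slightly more detail, the first step is to recall the localization setup of Assumption $(L(0))$: one fixes a truncation function $\chi'$, chooses stopping times $\tau_p\uparrow\infty$ and localized processes $Y(p)$ agreeing with $Y$ before $\tau_p$ and having the same diffusion part $D^{Y(p)}(\chi')$, and then must show
\[
\lim_{n\to\infty}\sup_{1\le m\le\lfloor nT\rfloor}\EE\left[\left\|g(\sqrt n\,\Delta^n_m Y(p))-g(\sqrt n\,\Delta^n_m D^{Y(p)}(\chi'))\right\|\right]=0 .
\]
The second step is to decompose the increment $\sqrt n\,\Delta^n_m Y(p)=\sqrt n\,\Delta^n_m D^{Y(p)}(\chi')+\sqrt n\,\Delta^n_m J^{(p)}$, where $J^{(p)}$ collects the (truncated) jump part, and to use the sub-quadratic growth of $g$ together with continuity to bound $\|g(a+b)-g(a)\|$ by an expression that is small when $b$ is small and only grows sub-quadratically when $b$ is large; here the scaling is crucial, since the diffusion increment is of order $n^{-1/2}$ so $\sqrt n\,\Delta^n_m D^{Y(p)}$ is $O_\PP(1)$, while the jump contribution $\sqrt n\,\Delta^n_m J^{(p)}$ vanishes in $L^1$ uniformly in $m$ because on $[0,T]$ the localized jump part has finitely many (or summable) jumps and $\EE[\,\|\Delta^n_m J^{(p)}\|\,]=O(1/n)$, so that $\sqrt n\,\EE[\,\|\Delta^n_m J^{(p)}\|\,]=O(n^{-1/2})$. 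Combining, the growth bound on $g$ converts these estimates into the required uniform convergence to $0$; this is exactly the content of Proposition~\ref{prop:jumprobust}~\ref{prop:jumprobustLLN}, which I would simply cite.

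The third step is purely bookkeeping: having established $(J)$ and $(L(0))$ under $(H)$, I apply Theorem~\ref{th:consistency} to obtain $\widehat{\rho_g(X)}^{n,N}_t\stackrel{\PP}{\to}\tfrac12(\rho_g(X_{t-})+\rho_g(X_t))$ for every $t\in[0,T]$ as $n,N\to\infty$ under the scaling $n/N^\gamma\to K$, $\gamma>1$, and the sharpened conclusion $\widehat{\rho_g(X)}^{n,N}_t\stackrel{\PP}{\to}\rho_g(X_t)$ when $X$ has no fixed time of discontinuity (so that $X_{t-}=X_t$ a.s.). No new probabilistic content is needed beyond what Theorem~\ref{th:consistency} and Proposition~\ref{prop:jumprobust} already provide.

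The main obstacle, and the only place where real work occurs, is the verification of $(L(0))$, i.e.\ controlling the $L^1$-distance between $g$ evaluated at the full increment and at the diffusion increment \emph{uniformly in $m$} and with the correct rate ($n^0$, so merely convergence to $0$, which is the mildest case). The subtlety is that one cannot simply use uniform continuity of $g$ on the whole line, since $\sqrt n\,\Delta^n_m D^{Y(p)}$ is only tight, not bounded; one must split according to whether this quantity is large or small and use the $o(\|x\|^2)$ growth to tame the tail while controlling the probability of large diffusion increments by Gaussian-type moment bounds on $D^{Y(p)}$. Since this argument is isolated in Proposition~\ref{prop:jumprobust}, the proof of the corollary itself is short, essentially a one-line deduction.

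\begin{proof}
Continuity of $g$ together with the growth condition $g(x)=o(\|x\|^2)$ implies in particular that $g$ has at most polynomial growth, so Assumption $(J)$ is satisfied. By Proposition~\ref{prop:jumprobust}~\ref{prop:jumprobustLLN}, the same hypotheses on $g$ guarantee that Assumption $(L(0))$ holds under $(H)$. Hence all hypotheses of Theorem~\ref{th:consistency} are met, and the two displayed convergences follow directly; in the case where $X$ has no fixed time of discontinuity one has $X_{t-}=X_t$ almost surely for each fixed $t$, which yields the sharpened statement.
\end{proof}
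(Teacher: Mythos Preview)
Your proposal is correct and matches the paper's own proof essentially line for line: the paper simply states that the corollary is a consequence of Theorem~\ref{th:consistency} and Proposition~\ref{prop:jumprobust}~\ref{prop:jumprobustLLN}. Your additional remarks about how $(L(0))$ is actually verified (splitting according to the size of the diffusion increment and using the sub-quadratic growth) anticipate the content of Proposition~\ref{prop:jumprobust}~\ref{prop:jumprobustLLN} and the accompanying localization Remark~\ref{rem:locjump}~\ref{rem:locjump1}, but are not needed for the corollary itself.
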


\begin{proof}
The proof is a consequence of Theorem~\ref{th:consistency} and Proposition~\ref{prop:jumprobust}~\ref{prop:jumprobustLLN} below.
\end{proof}

We can now transfer the consistency result to the instantaneous covariance estimator~\eqref{eq:estimatorX}.

\begin{corollary}\label{cor:consistentX}
Let $g$ be such that $\rho_{g}(x): S_d \times S_d,\, x \mapsto \rho_g(x)$ has a continuous inverse.
Then under the assumptions of Theorem~\ref{th:consistency} or Corollary~\ref{cor:consistency}
we have for every $t \in [0,T]$
\[
 \widehat{X}^{n,N}_{t} \stackrel{\mathbb{P}}{\to} \rho_g^{-1}\left(\frac{\rho_g(X_{t-})+\rho_g(X_t)}{2}\right)
\]
as $n, N \to \infty$.  If $X$ has no fixed time of discontinuity, then 
\[
 \widehat{X}^{n,N}_{t} \stackrel{\mathbb{P}}{\to} X_t.
\]
\end{corollary}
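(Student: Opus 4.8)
The plan is to deduce this statement directly from the consistency results for the Fourier--F\'ejer estimator of $\rho_g(X)$ (Theorem~\ref{th:consistency} or Corollary~\ref{cor:consistency}) combined with the continuous mapping theorem. First I would recall that by hypothesis $\rho_g^{-1}$ is continuous on its domain, and that $\widehat{X}^{n,N}_t$ is by definition~\eqref{eq:estimatorX} exactly $\rho_g^{-1}(\widehat{\rho_g(X)}^{n,N}_t)$. Under the assumptions of Theorem~\ref{th:consistency} (or Corollary~\ref{cor:consistency}) we already know that
\[
\widehat{\rho_g(X)}^{n,N}_t \stackrel{\mathbb{P}}{\to} \frac{\rho_g(X_{t-})+\rho_g(X_t)}{2}
\]
as $n,N\to\infty$. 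Since convergence in probability is preserved under a continuous transformation, applying $\rho_g^{-1}$ to both sides yields
\[
\widehat{X}^{n,N}_t = \rho_g^{-1}\!\left(\widehat{\rho_g(X)}^{n,N}_t\right) \stackrel{\mathbb{P}}{\to} \rho_g^{-1}\!\left(\frac{\rho_g(X_{t-})+\rho_g(X_t)}{2}\right),
\]
which is the first claim. If $X$ has no fixed time of discontinuity, then $X_{t-}=X_t$ almost surely, so the right-hand side reduces to $\rho_g^{-1}(\rho_g(X_t))=X_t$, giving the second claim.

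The one technical point worth spelling out is the precise form of the continuous mapping theorem being invoked: it is the statement that if $U_n \stackrel{\mathbb{P}}{\to} U$ in a metric space and $\phi$ is continuous at (a.e. value of) $U$, then $\phi(U_n)\stackrel{\mathbb{P}}{\to}\phi(U)$. Here $U_n = \widehat{\rho_g(X)}^{n,N}_t$ takes values in $S_d$ (or in the relevant subset where $\rho_g^{-1}$ is defined), $U$ is the $S_d$-valued random variable $\tfrac12(\rho_g(X_{t-})+\rho_g(X_t))$, and $\phi=\rho_g^{-1}$ is continuous everywhere on the domain by hypothesis. One should note that $X_t, X_{t-}\in S_d^{++}$ by Assumption~$(H1)$, so that in the case covered by Corollary~\ref{cor:consistency} the limiting argument of $\rho_g^{-1}$ genuinely lies in the domain of continuity of $\rho_g^{-1}$; under the bare assumptions of Theorem~\ref{th:consistency} one simply assumes $\rho_g^{-1}$ is globally continuous, which covers the limiting value automatically. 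There is no real obstacle here: the entire content of the corollary has been front-loaded into Theorem~\ref{th:consistency}, and the only thing being added is the elementary push-forward through a continuous map.
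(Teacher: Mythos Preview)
Your proof is correct and follows exactly the paper's approach: the paper's own proof is the one-line statement that the corollary is simply a consequence of the continuous mapping theorem. Your additional paragraph on the domain of $\rho_g^{-1}$ is more than the paper provides, but it is consistent with (and a reasonable elaboration of) the same argument.
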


\begin{proof}
 This corollary is simply a consequence of the continuous mapping theorem.
\end{proof}

\subsubsection{Central Limit Theorem}

In this section we formulate an asymptotic normality result for the estimators 
\[
\widehat{\rho_{g}(X)}^{n,N}_{t} \quad \textrm{ and } \quad \widehat{X}^{n,N}_{t}.
\]
For this we assume additionally that the covariance process $X$ has no fixed time of discontinuity, that its jumps are of finite activity and that the trajectories between two jumps are almost surely H\"older continuous with some exponent $\delta$.

\begin{theorem}\label{th:Fourierestconv}
Assume that $X$ has no fixed time of discontinuity and that its jumps are of finite activity. Suppose that the trajectories of $X$ between two jumps are almost surely H\"older continuous with some exponent $\delta$.
Let $1 <\gamma < 2\delta +1$ and suppose that $\lim \frac{n}{N^{\gamma}}=K$ for some constant $K>0$.  Then under ($H1$) and $(K)$ or $(K')$ and $(L(\eta))$ with $\eta \geq \frac{\gamma-1}{2\gamma}$, the random variable
\begin{align}~\label{eq:limit}
\sqrt{\frac{n T}{N}}\left(\widehat{\rho_{g}(X)}^{n,N}_{t}-\rho_{g}(X_t)\right)
\end{align}
converges for each $t \in [0,T]$ as $n,N \to \infty$ stably in law to an $\mathcal{F}$-conditional Gaussian random variable defined on an extension of the original probability space with mean $0$ and covariance function given by 
\begin{align*}
V^{iji'j'}_t:&=\frac{2}{3}(\rho_{g_{ij}g_{i'j'}}(X_t)-\rho_{g_{ij}}(X_t)\rho_{g_{i'j'}}(X_t)).
\end{align*}
\end{theorem}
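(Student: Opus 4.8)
The plan is to recognize $\widehat{\rho_{g}(X)}^{n,N}_{t}$ as a kernel spot estimator with the Fej\'er kernel as localizing window, and then to run a stable martingale central limit theorem after peeling off a Fej\'er approximation bias and a discretization error, both shown to be $o_{\PP}(\sqrt{N/n})$. First, exchanging the order of summation in~\eqref{eq:estimatorrhoX} and~\eqref{eq:Fouriercofest} yields
\[
\widehat{\rho_{g}(X)}^{n,N}_{t}=\frac{1}{nT}\sum_{m=1}^{\lfloor nT\rfloor}\mathcal{K}_N\!\Bigl(\tfrac{2\pi}{T}(t-t^n_{m-1})\Bigr)\,g(\sqrt{n}\Delta^n_m Y),\qquad
\mathcal{K}_N(x):=\sum_{|k|\le N}\Bigl(1-\tfrac{|k|}{N}\Bigr)e^{\im kx},
\]
where $\mathcal{K}_N\ge0$ is the Fej\'er kernel, with $\tfrac{1}{2\pi}\int_{-\pi}^{\pi}\mathcal{K}_N=1$, $\mathcal{K}_N(0)=N$, $\|\mathcal{K}_N'\|_{L^1}=O(N)$, $\int_0^T\mathcal{K}_N(\tfrac{2\pi}{T}(t-s))\,ds=T$, and — the source of the factor $2/3$ —
\[
\tfrac{1}{2\pi}\int_{-\pi}^{\pi}\mathcal{K}_N(x)^2\,dx=\sum_{|k|\le N}\Bigl(1-\tfrac{|k|}{N}\Bigr)^2=\tfrac{2}{3}N+O(1).
\]
Recall also $\sqrt{nT/N}\asymp N^{(\gamma-1)/2}$ under $\lim n/N^{\gamma}=K>0$.

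Next I would localize in the usual way (cf.~\cite{BNGJPS:06,JP:12}), so that $X$, $X^{-1}$, $b^X$, the $Q^j$, the modified drift of $Y$ and the jump coefficients are bounded and the jumps of $Y$ are bounded, and decompose (writing $D^Y$ for the diffusion part~\eqref{eq:diffpart}, $\xi^n_m:=g(\sqrt n\Delta^n_m D^Y)-\EE[g(\sqrt n\Delta^n_m D^Y)\mid\cF_{t^n_{m-1}}]$, and abbreviating $\mathcal{K}_N^{(m)}:=\mathcal{K}_N(\tfrac{2\pi}{T}(t-t^n_{m-1}))$)
\[
\widehat{\rho_{g}(X)}^{n,N}_{t}-\rho_g(X_t)=M^{n,N}_t+A^{n,N}_t+R^{n,N}_t+J^{n,N}_t,
\]
with $M^{n,N}_t=\tfrac1{nT}\sum_m\mathcal{K}_N^{(m)}\xi^n_m$, $A^{n,N}_t=\tfrac1T\int_0^T\mathcal{K}_N(\tfrac{2\pi}{T}(t-s))\rho_g(X_s)\,ds-\rho_g(X_t)$ the pure Fej\'er bias, $R^{n,N}_t$ the sum of the Riemann-sum error of $\tfrac1n\sum_m\mathcal{K}_N^{(m)}\rho_g(X_{t^n_{m-1}})$ against the integral in $A^{n,N}_t$ and the drift-bias term $\tfrac1{nT}\sum_m\mathcal{K}_N^{(m)}\bigl(\EE[g(\sqrt n\Delta^n_m D^Y)\mid\cF_{t^n_{m-1}}]-\rho_g(X_{t^n_{m-1}})\bigr)$, and $J^{n,N}_t=\tfrac1{nT}\sum_m\mathcal{K}_N^{(m)}\bigl(g(\sqrt n\Delta^n_m Y)-g(\sqrt n\Delta^n_m D^Y)\bigr)$ the jump-truncation error. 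I would then show the last three terms are $o_{\PP}(\sqrt{N/n})$. Since $X$ has no fixed time of discontinuity and finitely many jumps, $t$ is a.s. a continuity point of $X$ at positive distance from the jump times, and $s\mapsto\rho_g(X_s)$ is $\delta$-H\"older between jumps; classical Fej\'er-kernel estimates then give $\|A^{n,N}_t\|=O(N^{-\delta})$ (up to a logarithm when $\delta=1$), which is killed by $\sqrt{nT/N}\asymp N^{(\gamma-1)/2}$ precisely because $\gamma<2\delta+1$. Using $\|\mathcal{K}_N\|_\infty=N$, $\|\mathcal{K}_N'\|_{L^1}=O(N)$, $\int_0^T\mathcal{K}_N(\tfrac{2\pi}{T}(t-s))\,ds=T$, splitting according to whether $|s-t|\lesssim1/N$, and using the H\"older modulus of $\rho_g(X)$ at scale $1/n$, one gets $\|R^{n,N}_t\|=O_{\PP}(N/n)+O_{\PP}(n^{-\delta})$ (the drift bias being $O(1/n)$ after summation because $\nabla g$ is odd), again $o(\sqrt{N/n})$ under $\gamma<2\delta+1$, $\gamma>1$. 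Finally, $(L(\eta))$ together with $\int_0^T\mathcal{K}_N(\tfrac{2\pi}{T}(t-s))\,ds=T$ gives $\EE\|J^{n,N}_t\|\le\tfrac1{nT}\sum_m\mathcal{K}_N^{(m)}\sup_m\EE\|g(\sqrt n\Delta^n_m Y)-g(\sqrt n\Delta^n_m D^Y)\|=o(n^{-\eta})$, so $\sqrt{nT/N}\,J^{n,N}_t=o(n^{1/2-\eta}N^{-1/2})\to0$ exactly because $\eta\ge\tfrac{\gamma-1}{2\gamma}$.

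For the main term I would apply the stable martingale CLT (as in~\cite{BNGJPS:06,JP:12}) to $\sqrt{nT/N}\,M^{n,N}_t=\tfrac1{\sqrt{nNT}}\sum_m\mathcal{K}_N^{(m)}\xi^n_m$. The $(\xi^n_m)_m$ are $\cF_{t^n_m}$-martingale differences with $\EE[\xi^n_m(\xi^n_m)^\top\mid\cF_{t^n_{m-1}}]=\Sigma(X_{t^n_{m-1}})+o(1)$ uniformly in $m$, where $\Sigma_{iji'j'}(x):=\rho_{g_{ij}g_{i'j'}}(x)-\rho_{g_{ij}}(x)\rho_{g_{i'j'}}(x)$, and with conditionally bounded moments of all orders (polynomial growth of $g$, boundedness after localization). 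The conditional covariance of $\sqrt{nT/N}\,M^{n,N}_t$ is $\tfrac1{nNT}\sum_m(\mathcal{K}_N^{(m)})^2\Sigma(X_{t^n_{m-1}})$, which converges to $\tfrac{\Sigma(X_t)}{NT}\int_0^T\mathcal{K}_N(\tfrac{2\pi}{T}(t-s))^2ds=\tfrac{\Sigma(X_t)}{N}\sum_{|k|\le N}(1-|k|/N)^2\to\tfrac23\Sigma(X_t)=V_t$, using that $\mathcal{K}_N^2$ concentrates its mass near $t$ and that $s\mapsto\Sigma(X_s)$ is continuous at $t$; the Lyapunov condition holds since $\tfrac1{(nNT)^{1+\varepsilon/2}}\sum_m(\mathcal{K}_N^{(m)})^{2+\varepsilon}\EE\|\xi^n_m\|^{2+\varepsilon}\lesssim nN^{1+\varepsilon}/(nNT)^{1+\varepsilon/2}\lesssim N^{-\varepsilon(\gamma-1)/2}\to0$ for every $\varepsilon>0$; and the usual orthogonality conditions hold — against the Brownian motions because $g$ is even ($(K)$ or $(K')$), so $\EE[\xi^n_m\Delta^n_m Z\mid\cF_{t^n_{m-1}}]$ and $\EE[\xi^n_m\Delta^n_m B\mid\cF_{t^n_{m-1}}]$ are $O(1/n)$ (an even function of an asymptotically symmetric Gaussian against an odd increment), which after summation against $\mathcal{K}_N$ (of $L^1$-mass $T$) contributes $O((nN)^{-1/2})\to0$, and against martingales orthogonal to $(Z,B)$ by the structure of $D^Y$. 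This delivers stable convergence to the $\cF$-conditionally Gaussian limit with covariance $V_t$.

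The step I expect to be the main obstacle is the control of $R^{n,N}_t$: because $\mathcal{K}_N$ oscillates at frequency $N$ while $\|\mathcal{K}_N\|_\infty$ and $\|\mathcal{K}_N'\|_{L^1}$ grow like $N$, the crude bound $\|R^{n,N}_t\|\lesssim\tfrac1n\|\mathcal{K}_N\|_\infty\sum_m\mathrm{osc}_m(\rho_g(X))\asymp N n^{-\delta}$ is too weak and would force a stricter restriction on $\gamma$ than $\gamma<2\delta+1$; one must instead separate the region $|s-t|\lesssim1/N$ (where $\mathcal{K}_N\asymp N$ but the path varies only $O(n^{-\delta})$ per cell) from $|s-t|\gtrsim1/N$ (where $\mathcal{K}_N(\tfrac{2\pi}{T}(t-s))\asymp(N(t-s)^2)^{-1}$ is smooth on the cell scale), which brings the bound down to the admissible $O_{\PP}(N/n+n^{-\delta})$. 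A secondary subtlety is that one cannot fix $N$, invoke Theorem~\ref{th:Fouriercofestconv}~\ref{th:Fouriercofclt}, and let $N\to\infty$ afterwards: the weights depend on $N$, so the martingale CLT must be carried out directly with the Fej\'er weights as a genuinely two-parameter triangular array, and all negligibility estimates must be uniform in the regime $n\asymp N^{\gamma}$.
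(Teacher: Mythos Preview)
Your proposal is correct and follows the same architecture as the paper: express the estimator through the Fej\'er kernel, peel off the Fej\'er bias ($O(N^{-\delta})$ via H\"older continuity between the finitely many jumps), the Riemann discretization error ($O(N/n)+O(n^{-\delta})$, packaged in the paper as Lemma~\ref{lem:Fejerconv}), and the jump error (via $(L(\eta))$), then run the stable martingale CLT of \cite[Theorem IX.7.28]{JS:03} with the Jackson-kernel identity $\tfrac1N\int F_N^2\to\tfrac{2}{3}\cdot 2\pi$ producing the factor $2/3$. The one structural difference lies in the martingale term: the paper freezes the volatility and works with $\beta^n_m=\sqrt n\sqrt{X_{t^n_{m-1}}}\Delta^n_m Z$, so that the conditional expectation is \emph{exactly} $\rho_g(X_{t^n_{m-1}})$ and the orthogonality $\EE[(g(\beta^n_m)-\rho_g)\Delta^n_m Z\mid\cF_{t^n_{m-1}}]=0$ holds identically by evenness of $g$; you instead center $g(\sqrt n\Delta^n_m D^Y)$ by its own conditional expectation, which shifts part of the work into your ``drift-bias'' term and forces you to argue the orthogonality only up to $O(1/n)$. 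Both routes are standard and go through under $(SH1)$; the paper's freezing buys exact identities at the price of an extra comparison $g(\sqrt n\Delta^n_m D^Y)-g(\beta^n_m)$ handled via \cite[Lemma 5.3--5.4]{BNGJPS:06} and \cite[Section 5.3.3.C]{JP:12}, while your version absorbs that comparison into the CLT verification itself. The paper checks the Lindeberg condition by Cauchy--Schwarz against $1_{\{\|Z^{n,N}_m\|>\varepsilon\}}$ rather than your Lyapunov bound, and your closing discussion of the $R^{n,N}_t$ obstacle (splitting $|s-t|\lesssim 1/N$ from the tail) is precisely the content of Lemma~\ref{lem:Fejerconv}.
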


Similarly as above for the consistency statement we now translate the central limit theorem to $\widehat{X}_t^{n,N}$ defined in~\eqref{eq:estimatorX}.

\begin{corollary}\label{cor:cltestimatorX}
Let $g$ be such that $\rho_{g}(x): S_d \times S_d,\, x \mapsto \rho_g(x)$ has a differentiable inverse.
Then under the assumptions of Theorem~\ref{th:Fourierestconv}
\begin{align*}
\sqrt{\frac{n T}{N}}\left(\widehat{X}_t^{n,N}-X_t\right)
\end{align*}
converges as $n, N \to \infty$ for each $t \in [0,T]$ stably in law to a $\mathcal{F}$-conditional Gaussian random variable given by
\[
M_t=(\nabla \rho_{g}(X_t))^{-1}N_t,
\]
where $N_t$ denotes the limit of~\eqref{eq:limit}.
In particular the covariance of $M_t$ is given by
\[
U_t^{iji'j'}=\sum_{k,l, k',l'} (\nabla \rho_{g}(X_t))_{ijkl}^{-1}(\nabla \rho_{g}(X_t))_{i'j'k'l'}^{-1} V_t^{klk'l'}.
\]
\end{corollary}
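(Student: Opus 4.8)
The plan is to derive this corollary from Theorem~\ref{th:Fourierestconv} by a delta-method argument: a first-order Taylor expansion of $\rho_g^{-1}$ around $\rho_g(X_t)$, combined with the fact that stable convergence in law is preserved both under Slutsky-type perturbations and under multiplication by $\mathcal{F}$-measurable random matrices.

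First I would record the two ingredients. Since the assumptions of Theorem~\ref{th:Fourierestconv} include that $X$ has no fixed time of discontinuity, Corollary~\ref{cor:consistentX} gives $\widehat{\rho_{g}(X)}^{n,N}_{t}\stackrel{\mathbb{P}}{\to}\rho_g(X_t)$, so that $h_n:=\widehat{\rho_{g}(X)}^{n,N}_{t}-\rho_g(X_t)\stackrel{\mathbb{P}}{\to}0$; and Theorem~\ref{th:Fourierestconv} gives that $\sqrt{nT/N}\,h_n$ converges stably to $N_t$, hence in particular $\sqrt{nT/N}\,h_n=O_{\mathbb{P}}(1)$, i.e. $h_n=O_{\mathbb{P}}(\sqrt{N/(nT)})$.

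Next, since $\rho_g^{-1}$ is differentiable at $y_0:=\rho_g(X_t)$, the chain rule applied to $\rho_g^{-1}\circ\rho_g=\mathrm{id}$ identifies its derivative there (a linear map on $S_d$, i.e. a four-index tensor) as $(\nabla\rho_g(X_t))^{-1}$. Writing the Peano expansion
\[
\widehat{X}^{n,N}_t-X_t=\rho_g^{-1}(y_0+h_n)-\rho_g^{-1}(y_0)=(\nabla\rho_g(X_t))^{-1}h_n+r(h_n),\qquad \frac{\|r(h)\|}{\|h\|}\to0\ \text{as}\ h\to0,
\]
I would multiply by $\sqrt{nT/N}$. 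The linear term equals $(\nabla\rho_g(X_t))^{-1}\bigl(\sqrt{nT/N}\,h_n\bigr)$; because $(\nabla\rho_g(X_t))^{-1}$ is a deterministic continuous function of the $\mathcal{F}_t$-measurable variable $X_t$, hence $\mathcal{F}$-measurable, the stability property of stable convergence (Remark~\ref{rem:Fouriercoeff}~\ref{rem:it1}) yields that this term converges stably to $(\nabla\rho_g(X_t))^{-1}N_t$. For the remainder, $\sqrt{nT/N}\,r(h_n)=\bigl(\sqrt{nT/N}\,\|h_n\|\bigr)\cdot\bigl(\|r(h_n)\|/\|h_n\|\bigr)$, where the first factor is $O_{\mathbb{P}}(1)$ and the second is $o_{\mathbb{P}}(1)$ since $h_n\stackrel{\mathbb{P}}{\to}0$; hence $\sqrt{nT/N}\,r(h_n)\stackrel{\mathbb{P}}{\to}0$. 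Slutsky's lemma for stable convergence then gives $\sqrt{nT/N}\,(\widehat{X}^{n,N}_t-X_t)\to M_t:=(\nabla\rho_g(X_t))^{-1}N_t$ stably, which is $\mathcal{F}$-conditionally Gaussian with mean $0$. The covariance formula follows because the $\mathcal{F}$-conditional covariance of a linear image of a conditionally Gaussian vector transforms bilinearly, giving $U_t^{iji'j'}=\sum_{k,l,k',l'}(\nabla\rho_g(X_t))^{-1}_{ijkl}(\nabla\rho_g(X_t))^{-1}_{i'j'k'l'}V_t^{klk'l'}$.

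I do not anticipate a genuine obstacle here; the argument is a routine delta-method wrap-up. The only points requiring a little care are the bookkeeping with the four-index objects $\nabla\rho_g(X_t)$ and its inverse, and checking that mere differentiability of $\rho_g^{-1}$ (rather than $C^1$) suffices — which it does precisely because the normalizing rate $\sqrt{nT/N}$ and the stochastic order $O_{\mathbb{P}}(\sqrt{N/(nT)})$ of $h_n$ are matched, so the Peano remainder is annihilated in the limit.
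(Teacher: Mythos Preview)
Your proposal is correct and is exactly the approach the paper takes: the paper's proof is a one-line reference to the generalized $\Delta$-method for stable convergence (citing \cite[Theorem 1.10]{P:06}), and you have simply spelled out the standard delta-method/Slutsky argument in detail. Nothing further is needed.
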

\begin{proof}
 This assertion follows from the concept of stable convergence and is known as generalized $\Delta$-method (see~\cite[Theorem 1.10]{P:06}).
\end{proof}

\begin{remark}\phantomsection\label{rm:CLTremarks}
\begin{enumerate}
\item In the above theorem the assumption that the trajectories of $X$ are $\delta$-H\"older continuous between two jumps is crucial for the relation between $n$ and $N$ and thus for the speed of convergence.
Typically H\"older continuity of paths can be verified by using Kolmogorov's criterium, which states that the H\"older exponent $\delta$ satisfies $\delta < \frac{\varepsilon}{\alpha}$ if
\[
\mathbb{E}\left[\| X_t-X_s\|^{\alpha}\right] \leq C |t-s|^{1+\varepsilon}.
\]
In our case, when we assume no jumps at all, we have for $\alpha >2$
\begin{align*}
& \mathbb{E}\left[\left\| \int_s^t b^X_u du +\sum_{j=1}^p \int_s^t Q_{u}^j dB_{u,j}\right\|^{\alpha}\right] \\
\leq & C\left(\mathbb{E}\left[\left\| \int_s^t b^X_u du \right\|^{\alpha}\right] +\sum_{j=1}^p \mathbb{E}\left[\left\| \int_s^t Q_{u}^j dB_{u,j}\right\|^{\alpha}\right]\right)\\ 
\leq & C\left(\mathbb{E}\left[\int_s^t |t-s|^{\alpha-1}\|b^X_u\|^{\alpha} du \right] +\sum_{j=1}^p \mathbb{E}\left[\left\| \int_s^t (Q_{u}^j)^2 du\right\|^{\frac{\alpha}{2}}\right]\right)\\
\leq & C|t-s|^{\alpha}+C\sum_{j=1}^p \mathbb{E}\left[ \int_s^t |t-s|^{\frac{\alpha}{2}-1} \|Q_{u}^j\|^{\alpha} du\right]\\
\leq & C|t-s|^{\alpha}+C|t-s|^{\frac{\alpha}{2}}\\
\leq & C|t-s|^{\frac{\alpha}{2}}
\end{align*}
provided that $\mathbb{E}\left[\|b^X_u\|^{\alpha}\right]< \infty $ and $\mathbb{E}\left[  \|Q_{u}^j\|^{\alpha} \right]< \infty$.
For the last inequality we used the fact that we are working on $[0,T]$.
If these moments exist for all $\alpha$ then we have H\"older continuity with $\delta < \frac{1}{2}$.
This is for example satisfied for affine diffusion processes on $S_d^+$.
\item The convergence rate in the above central limit theorem is of order $n^{\frac{\gamma-1}{2\gamma}}$. If we have H\"older continuity (between two jumps) for all $\delta < \frac{1}{2}$, then $\gamma \in (1,2)$.  The higher $\gamma$ the better the convergence rate and it lies between $(0, \frac{1}{4})$ in this case.
If the paths of the covariance are even $k$-times differentiable for some $k \in \mathbb{N}$ and the $k^{th}$ derivative is H\"older continuous with exponent $\delta$, then $\gamma$ can be chosen to lie in $(1,2(k+\delta)+1)$. In the case of a constant covariance process, the convergence rate is thus $\sqrt{n}$.
\item Condition $(L(\eta))$ with $\eta \geq \frac{\gamma-1}{2\gamma}$ is satisfied, if the assumptions of Proposition~\ref{prop:jumprobust}~\ref{prop:jumprobustCLT} below
hold with $r>\frac{\gamma-1}{\gamma} \frac{\beta}{2-\beta}$ and $\beta < 1 +\frac{1}{2\gamma-1}$.
\item \label{rm:Fourierestconv}
As already mentioned in the introduction the asymptotic variance constant of the presented Fourier-F\'ejer estimator is smaller that the one of the classical local realized variance estimator, while both estimators have the same rate of convergence. Notice here also the asymptotic equivalence of spot variance regression with well-understood Gaussian shift models in Le Cam's sense, see, e.g., \cite{rei:08}. Therefore the following analogy with well-known shrinkage estimators does make sense:

For simplicity, let us consider the one-dimensional case with $g(y)=y^2$. Then the variance of the Fourier-F\'ejer estimator equals
\begin{align}\label{eq:varFourier}
\frac{4}{3}X_t^2
\end{align}
under the assumptions of the above theorem. In comparison, consider the classical (non-truncated) local realized variance estimator given by
\[
\widehat{\widehat{X}}^{n,N}_t=\sum_{j=1}^{N} \varepsilon^N_j(t) \sum_{m=1}^{ \left\lfloor nT \right\rfloor}  (\Delta_m^n Y)^2 \varepsilon^N_j(t^n_{m-1})
\]
with 
\[
\varepsilon^N_j(t)=1_{[t^N_{j-1},t^N_j]}(t)\frac{1}{\sqrt{t^N_{j}-t^N_{j-1}}} \quad \textrm{and} \quad  t^N_{j}=\frac{jT}{N},\, j=1, \ldots, N.
\]
Similar as in the above theorem, suppose $1 <\gamma < 2$ and $\lim \frac{n}{N^{\gamma}}=K$ for some constant $K>0$. 
Then, according to~\cite[Theorem 13.3.3 b)]{JP:12}
\[
\sqrt{\frac{nT}{N}}\left(\widehat{\widehat{X}}^{n,N}_t -X_t\right)
\]
converges for each $t \in [0,T]$ as $n,N \to \infty$ stably in law to an $\mathcal{F}$-conditional Gaussian random variable with mean $0$ and  covariance function given by 
$
2 X_t^2
$
and is therefore $\frac{3}{2}$ times bigger than~\eqref{eq:varFourier}. Let us remark that in the notation of~\cite[Theorem 13.3.3 b)]{JP:12}, $k_n$ corresponds to $\frac{n T }{N}$, that is, the number of points in the interval $[t^N_{j-1},t^N_j]$, $\tau=\frac{\gamma-1}{\gamma} \in (0,\frac{1}{2})$ and $\beta'=K^{\frac{1}{\gamma}}T$
such that the above assertion concerning the classical estimator is implied by~\cite[Theorem 13.3.3 a) and b)]{JP:12} with $\beta=0$ since $\tau< \frac{1}{2}$.

A similar variance reduction phenomenon can be achieved by applying the James-Stein estimator to $\widehat{\widehat{X}}^{n,N}_t$ and considering the following shrinkage estimator:
\[
\left(1- \frac{(M-2)2X^2N}{n \sum_{i=1}^M\left(\widehat{\widehat{X}}^{n,N}_{t_i}-\frac{1}{M}\sum_k \widehat{\widehat{X}}^{n,N}_{t_k}\right)^2}\right)\left(\widehat{\widehat{X}}^{n,N}_t-\frac{1}{M}\sum_{k=1}^M \widehat{\widehat{X}}^{n,N}_{t_k}\right)+\frac{1}{M}\sum_{k=1}^M \widehat{\widehat{X}}^{n,N}_{t_k},
\]
where $M$ denotes the number of evaluation points of $\widehat{\widehat{X}}^{n,N}$. We consider here the estimation of spot volatility, which naturally comes with an (asymptotically normal) noise, in the realm of estimation of drift in a noisy environment. In this setting the James-Stein methodology of shrinkage can improve estimator variances for the price of (small) biases, see, e.g., the infinite-dimensional recent work~\cite{prirev:08}. Notice that shrinkage towards the average of the spot-observations acts like a convolution with an almost ``flat-tailed'' kernel, which additionally behaves in the center like a parabola of the type $ 1 - x^2 $. This is related to two crucial properties of the Fourier-F\'ejer kernel, which again supports the Fourier approach.

The reduction of the estimator variance is confirmed by Figure~\ref{Fig:comp} below, which shows a comparison between the classical local realized variance estimator and the Fourier-F\'ejer estimator. In particular, the variance of the Fourier-F\'ejer estimator is comparable with the one of a James-Stein shrinkage variant of the classical estimator.
In our illustration example the underlying semimartingale $Y$ is a drifted Brownian motion with constant variance, that is,
\[
dY_t=b^Y_t dt+ \sqrt{X}dZ_t,
\]
where $b^Y$ denotes the drift, $Z$ a standard Brownian motion and $X$ the deterministic constant variance, which we aim to measure on a coarser grid given discrete observations of $Y$. 

\begin{figure}[ht]
\centering
\includegraphics[width=300pt,height= 200pt]{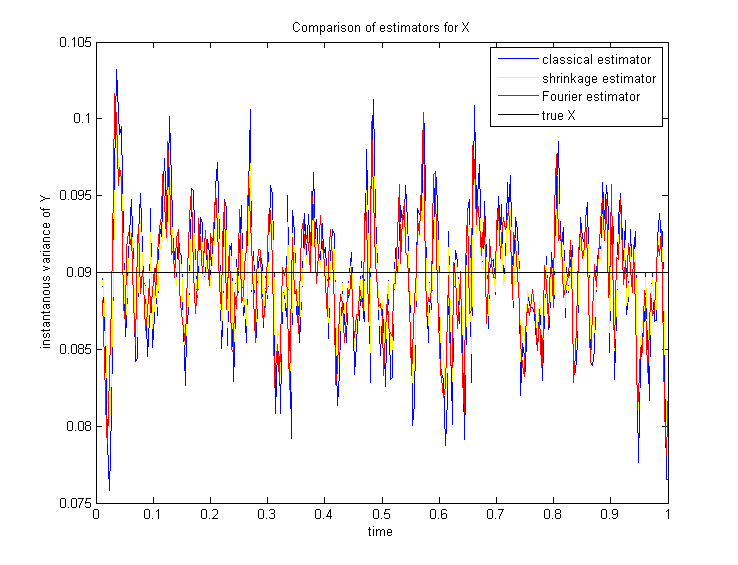}
\caption{Comparison of estimators}\label{Fig:comp}
\end{figure}

\end{enumerate}
\end{remark}

\subsection{Covariance of covariance estimation}
Having reconstructed the path of the instantaneous covariance, we can now proceed with step (2), that is, the estimation of functions of the integrated covariance. To this end we plug the reconstructed path of the instantaneous covariance process into
jump robust estimators of the form
\[
V(\widehat{X}^{n,N},f,0)_T^m:=\frac{1}{m}\sum_{p=1}^{ \left\lfloor mT \right\rfloor} 
 f(\sqrt{m}\Delta_p^m\widehat{X}^{n,N}),
\]
where $f: S_d \to \mathbb{R}^{d^2 \times d^2}$ satisfies certain properties which are specified in Theorem~\ref{th:covcov} below.

\begin{theorem}\phantomsection\label{th:covcov}
\begin{itemize}
\item Assume that the jumps of $X$ are of finite activity. Moreover, suppose that the paths of $X$ between two jumps are almost surely H\"older continuous with exponent $\delta$.
\item Let $1 <\gamma < 2\delta +1$ and suppose that $\lim \frac{n}{N^{\gamma}}=K$ for some constant $K>0$. 
\item Let the conditions $(H1)$ and $(K)$ or $(K')$ and $L(\eta)$ with $\eta \geq \frac{\gamma-1}{2\gamma}$ be in force 
and suppose that the instantaneous covariance process $q$ of $X$ defined by $q_{s,iji'j'}=\sum_{k,l} Q_{s,ij}^k Q_{s,i'j'}^l$ satisfies $(H1)$.
\item Let $g$ be such that  $x\mapsto(\nabla \rho_g(x))^{-1}$ exists and is continuous.  
\item Suppose that $f: S_d \to \mathbb{R}^{d^2 \times d^2}$ satisfies either $(K)$ or $(K')$ and is additionally 
globally $\alpha$-H\"older continuous for $\alpha \in (0,1)$.
\item Let $\iota < \frac{\gamma-1}{\gamma}\frac{\alpha}{1+\alpha}$ and assume that $\lim \frac{m}{n^{\iota}}=\widetilde{K}$ for some constant $\widetilde{K}>0$.
\end{itemize}
Then
\[
n^{\frac{\iota}{2}} \left( V(\widehat{X}^{n,N},f,0)_T^m-\int_0^T \rho_f(q_s) ds\right)
\]
converges as $n, N, m \to \infty$ stably in law to a $\mathcal{F}$-conditional Gaussian random variable defined on an extension of the original probability space with $0$ mean and covariance function given by
\[
F_{ijkli'j'k'l'}=\frac{1}{\widetilde{K}}\int_0^T\left(\rho_{f_{iji'j'}f_{klk'l'}}(q_s)+\rho_{f_{ijk'l'}}(q_s)\rho_{f_{i'j'kl}}(q_s)\right)ds.
\]
\end{theorem}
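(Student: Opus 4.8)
The plan is to split the error of the second–stage statistic into a \emph{plug–in error} caused by using the reconstructed path $\widehat X^{n,N}$ instead of the unobserved true path $X$, and the error of the idealized statistic $V(X,f,0)_T^m$ one would form if $X$ itself were observed. I would first show that the plug–in error is $o_{\mathbb{P}}(n^{-\iota/2})$, and then invoke the already available central limit theorem for jump–robust estimators of integrated functions of the realized covariance, now applied to the It\^o–semimartingale $X$ whose instantaneous covariance is $q$. As usual one localizes at the outset so that the coefficients of $(H1)$ for $X$ and for $q$, and $(\nabla\rho_g)^{-1}$ along the path of $X$, may be assumed bounded.

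\emph{Step 1: the plug–in error is negligible.} Write
\[
n^{\iota/2}\Big(V(\widehat X^{n,N},f,0)_T^m-\int_0^T\rho_f(q_s)\,ds\Big)
= n^{\iota/2}R^{n,N,m}+n^{\iota/2}\Big(V(X,f,0)_T^m-\int_0^T\rho_f(q_s)\,ds\Big),
\]
with $R^{n,N,m}:=\frac1m\sum_{p=1}^{\lfloor mT\rfloor}\big(f(\sqrt m\,\Delta_p^m\widehat X^{n,N})-f(\sqrt m\,\Delta_p^m X)\big)$. The global $\alpha$–H\"older continuity of $f$ gives $\|R^{n,N,m}\|\le C\,m^{\alpha/2-1}\sum_{p}\|\Delta_p^m(\widehat X^{n,N}-X)\|^{\alpha}$, and since $\rho_g$ is $C^1$ with continuous, bounded $(\nabla\rho_g)^{-1}$, a mean–value estimate controls $\|\Delta_p^m(\widehat X^{n,N}-X)\|$ by the increments of $E_t:=\widehat{\rho_{g}(X)}^{n,N}_{t}-\rho_{g}(X_t)$. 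By Theorem~\ref{th:Fourierestconv} (whose hypotheses, in particular $(L(\eta))$ with $\eta\ge\frac{\gamma-1}{2\gamma}$, are in force precisely to absorb the jumps of $Y$) one has $E_t=O_{L^2}\!\big(\sqrt{N/(nT)}\big)$; moreover the statistical part of $E$ is a F\'ejer–smoothed noise of bandwidth $T/N$, and the standing bound on $\iota$ forces $T/N\ll 1/m$, so the $E_{t^m_p}$ at distinct points of $\Lambda_2$ decorrelate and $\mathbb E[\|\Delta_p^m(\widehat X^{n,N}-X)\|^{\alpha}]\lesssim (N/n)^{\alpha/2}$ uniformly in $p$. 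Hence $\mathbb E\|R^{n,N,m}\|\lesssim T\,m^{\alpha/2}(N/n)^{\alpha/2}$, and with $N\asymp n^{1/\gamma}$, $m\asymp n^{\iota}$,
\[
n^{\iota/2}\,\mathbb E\|R^{n,N,m}\|\lesssim n^{(\iota(1+\alpha)+\alpha/\gamma-\alpha)/2}\longrightarrow 0
\]
exactly because $\iota<\frac{\gamma-1}{\gamma}\frac{\alpha}{1+\alpha}$; thus $n^{\iota/2}R^{n,N,m}\stackrel{\mathbb{P}}{\to}0$.

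\emph{Step 2: CLT for the idealized statistic, rescaling and identification.} It remains to analyze $\sqrt m\big(V(X,f,0)_T^m-\int_0^T\rho_f(q_s)\,ds\big)$. Here $X$ is an It\^o–semimartingale whose instantaneous covariance $q$ again satisfies $(H1)$, $f$ satisfies $(K)$ or $(K')$, and the jumps of $X$, being of finite activity, are automatically annihilated by the even jump–robust $f$ (this is where finite activity and the $C^1$–outside–hyperplanes structure enter at the second stage, taking the place of $(L(\cdot))$ at the first). Consequently the central limit theorem underlying Theorem~\ref{th:Fouriercofestconv}~\ref{th:Fouriercofclt}, applied with $X$ in the role of $Y$, $f$ in the role of $g$ and $k=k'=0$ (equivalently \cite[Theorems 5.3.5 and 5.3.6]{JP:12}), shows that $\sqrt m\big(V(X,f,0)_T^m-\int_0^T\rho_f(q_s)\,ds\big)$ converges stably in law to an $\mathcal F$–conditional centered Gaussian variable on an extension of $(\Omega,\mathcal F,\mathbb{P})$, with conditional covariance the time–integral over $[0,T]$ of the covariance of $f(G_s)$, $G_s\sim\mathcal N(0,q_s)$. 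Since $m/n^{\iota}\to\widetilde{K}>0$ one has $n^{\iota/2}=\widetilde{K}^{-1/2}\sqrt m\,(1+o(1))$, so combining with Step~1, $n^{\iota/2}(V(\widehat X^{n,N},f,0)_T^m-\int_0^T\rho_f(q_s)\,ds)$ converges stably to $\widetilde{K}^{-1/2}$ times the above limit, again $\mathcal F$–conditionally centered Gaussian with covariance $\widetilde{K}^{-1}$ times the previous one; writing this covariance out through the Gaussian moments $\rho_f$ and $\rho_{ff}$ at $\mathcal N(0,q_s)$ (a fourth–moment/Isserlis computation) yields the tensor $F_{ijkli'j'k'l'}$ of the statement. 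Stability is preserved throughout, being stable under the $o_{\mathbb{P}}(1)$–perturbation of Step~1 and under multiplication by the deterministic scalars $n^{\iota/2}/\sqrt m\to\widetilde{K}^{-1/2}$.

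\emph{Expected main obstacle.} The crux is Step~1. Because $\widehat X^{n,N}$ is a \emph{global} functional of the fine–grid observations of $Y$, its increments over the coarse grid $\Lambda_2$ are not independent, and the pointwise bound of Theorem~\ref{th:Fourierestconv} does not by itself suffice: one needs a \emph{joint}, multi–time control showing that the reconstruction noise, although smooth at scale $T/N$, nonetheless decorrelates across the coarser scale $1/m$ and carries exactly the magnitude that makes $\sum_p$ of its H\"older increments, rescaled by $n^{\iota/2}\asymp\sqrt m$, vanish. The admissible range $\iota<\frac{\gamma-1}{\gamma}\frac{\alpha}{1+\alpha}$ is precisely the bookkeeping of this cancellation; a secondary point is to check that the truncation used for the jumps of $Y$ at the first stage is compatible with the (automatic) jump–robustness for the finite–activity jumps of $X$ at the second stage.
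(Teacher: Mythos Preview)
Your two–step decomposition (plug–in error plus CLT for the idealized statistic $V(X,f,0)_T^m$) is exactly the paper's approach, and your Step~2 matches the paper verbatim: it invokes \cite[Theorems~5.3.5 and~5.3.6]{JP:12} applied to $X$ with instantaneous covariance $q$, and then rescales by $n^{\iota/2}/\sqrt m\to\widetilde K^{-1/2}$.

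Step~1 has the correct target and the correct final bookkeeping of exponents, but two points deserve correction. First, the ``decorrelation'' of the $E_{t_p^m}$ is a red herring: the paper never uses it. Since $\alpha<1$ one simply has $\|\Delta_p^m(\widehat X^{n,N}-X)\|^\alpha\le\|\widehat X^{n,N}_{t_p^m}-X_{t_p^m}\|^\alpha+\|\widehat X^{n,N}_{t_{p-1}^m}-X_{t_{p-1}^m}\|^\alpha$ by subadditivity, so the plug–in error is dominated by $m^{(\alpha-1)/2}\sum_p\|\widehat X^{n,N}_{t_p^m}-X_{t_p^m}\|^\alpha$ without any covariance consideration. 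Second, and this is the real gap, you cannot read off a uniform moment bound $E_t=O_{L^2}(\sqrt{N/n})$ from Theorem~\ref{th:Fourierestconv}: that theorem is a \emph{stable CLT}, i.e.\ convergence in law, and does not by itself yield $L^2$ control (let alone uniformly in $t$). The paper therefore returns to the four–term decomposition $E_t=\widetilde E_0^N(t)+E_1^{n,N}(t)+E_2^{n,N}(t)+E_3^{n,N}(t)$ from the proof of Theorem~\ref{th:Fourierestconv} and bounds $m^{(\alpha-1)/2}\sum_p\|\cdot\|^\alpha$ term by term (Lemma~\ref{lem:supconvergence}): $\widetilde E_0^N$ and $E_3^{n,N}$ via the pathwise F\'ejer/Riemann estimates of order $N^{-\delta}$ and $\max(N/n,n^{-\delta})$; $E_1^{n,N}$ via $n^{\theta}\sup_t\|E_1^{n,N}(t)\|\stackrel{\mathbb{P}}{\to}0$ for $\theta\le\tfrac{\gamma-1}{2\gamma}$; and only $E_2^{n,N}$ via a genuine second–moment bound. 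In addition, the jumps of $X$ force one further split that you omit: on the finitely many coarse intervals containing a jump of $X$ the F\'ejer bias $\widetilde E_0^N$ is \emph{not} $O(N^{-\delta})$, but those terms are killed by the prefactor $m^{(\alpha-1)/2}\to 0$ (this is precisely where $\alpha<1$ is used); the H\"older–based bounds apply only on the complementary set $A_p^m$. So your diagnosis in the ``expected main obstacle'' paragraph is right that the pointwise CLT is insufficient, but the remedy is not decorrelation across $\Lambda_2$---it is this term–by–term, partly pathwise, partly $L^2$ analysis.
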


\section{Proofs of the Asymptotic Properties of the Fourier Coefficients Estimator} \label{sec:FC}

In this section we provide the proofs of Theorem~\ref{th:Fouriercofestconv} and Corollary~\ref{cor:jumprobust}.
Throughout $C$ always denotes some constant which can vary from line to line.
Moreover, let us introduce the following notation
\begin{align}
 \beta_{m}^n&:=\sqrt{n}\sqrt{X_{t^n_{m-1}}}\Delta_m^n Z,\label{eq:betamn}\\
  \rho_{m}^n(g)&:=\rho_g(X_{t^n_m}).\label{eq:rhomn}
\end{align}

\subsection{Proof of Theorem~\ref{th:Fouriercofestconv}}

\begin{proof}
We consider the one-dimensional case, i.e., $g: \mathbb{R} \to \mathbb{R}$, since the multi-dimensional case follows from it immediately in a methodological sense. Moreover, by the so called \emph{localization procedure} as described in~\cite[Section 3]{BNGJPS:06} or~\cite[Section 4.4.1]{JP:12}, we can strengthen assumption $(H)$ on $Y$ together with Condition~\eqref{eq:moddrift} to $(SH)$ (see~\cite[Section 3]{BNGJPS:06}), that is, $b^Y(\chi')$ 
defined in~\eqref{eq:moddrift}, and $X$ are supposed to be uniformly bounded by a constant. Similarly, assumption $(H1)$ on $X$ is strengthened to $(SH1)$, that is, $(SH)$ holds and the differential characteristics of the drift and the diffusion part of $\sqrt{X}$ and~\eqref{eq:comproot}, that is,
\[
\left(\int_{S_d} (\sqrt{X_{t-}+\xi}-\sqrt{X_{t-}}) \, F_t(d\xi)\right)_{t \geq 0}
\]
are bounded by a constant (compare~\cite[Assumption $(SH1)$]{BNGJPS:06} and~\cite[Assumption 4.4.7 (or (SK))]{JP:12}).

Let us denote by $\mathcal{B}^N$ the elements of the Fourier basis
\begin{align*}
 \mathcal{B}^N(t)&:=(e^{-\im\frac{2\pi}{T}(-N)t}, \ldots, 1,\ldots, e^{-\im\frac{2\pi}{T}(N)t})^{\top},
\end{align*}
and let us split
\begin{align*}
 &\sqrt{n} \left(V(Y,g)_T^{n,N}-T\mathcal{F}^N(\rho_{g}(X))\right)\\
 &\quad=\sum_{m=1}^{ \left\lfloor nT \right\rfloor} 
   \underbrace{\left( \frac{1}{\sqrt{n}}\mathcal{B}^N(t^n_{m-1})g(\sqrt{n}\Delta_m^n Y) -\sqrt{n}\int_{t_{m-1}^n}^{t_m^n} \mathcal{B}^N(s)\rho_g(X_s)ds\right)}_{\eta_m^n}
\end{align*}
into 
\[
 \sum_{m=1}^{ \left\lfloor nT \right\rfloor} (\eta_m^{n,1}+\eta_m^{n,2}+\eta_m^{n,3}) \, ,
\]
where 
 \begin{align*}
 \eta_m^{n,1}&=\frac{1}{\sqrt{n}}\mathcal{B}^N(t^n_{m-1})(g(\beta_m^n)-\rho_{m-1}^n(g))\, ,\\
 \eta_m^{n,2}&=\sqrt{n}\int_{t_{m-1}^n}^{t_m^n}(\mathcal{B}^N(t^n_{m-1})\rho_{m-1}^n(g)-\mathcal{B}^N(s)\rho_g(X_s))ds \, ,\\
  \eta_m^{n,3}&= \frac{1}{\sqrt{n}}\mathcal{B}^N(t^n_{m-1})(g(\sqrt{n}\Delta_m^n Y)- g(\beta_m^n)) \, ,
\end{align*}
and $\beta_m^n$ and $\rho_m^n$ are defined in~\eqref{eq:betamn} and~\eqref{eq:rhomn}.
We divide the proof into several steps: the \emph{first step}, which is subject of Proposition~\ref{prop:1} below, consists in dealing with
\begin{equation}\label{eq:firststepU}
  U^n_T(g,N):=\sum_{m=1}^{ \left\lfloor nT \right\rfloor}  \eta_m^{n,1}=\frac{1}{\sqrt{n}}\sum_{m=1}^{ \left\lfloor nT \right\rfloor} \mathcal{B}^N(t^n_{m-1})(g(\beta_{m}^n)- \rho_{m-1}^n(g)).
\end{equation}
As stated in Proposition~\ref{prop:1} below, $(U^n(g,N))_{n\in \mathbb{N}}$ converges stably in law to the process $U(g,N)$, defined in~\eqref{eq:UgN}.

For the central limit theorem the \emph{second step} consists in showing that
\[
\sum_{m=1}^{ \left\lfloor nT \right\rfloor}\eta_m^{n,2}=\sum_{m=1}^{ \left\lfloor nT\right\rfloor}\sqrt{n}\int_{t_{m-1}^n}^{t_m^n}(\mathcal{B}^N(t^n_{m-1})\rho_{m-1}^n(g)-\mathcal{B}^N(s)\rho_g(X_s))ds
\]
converges in probability to $0$. This can be shown similarly as in~\cite[Section 5.3.3, B]{JP:12}.
Note that for the consistency result it is enough that
\[
\sum_{m=1}^{ \left\lfloor nT \right\rfloor}\frac{1}{\sqrt{n}}\eta_m^{n,2}=\sum_{m=1}^{ \left\lfloor nT\right\rfloor}\frac{1}{n}\mathcal{B}^N(t^n_{m-1})\rho_{m-1}^n(g)-\int_0^T \mathcal{B}^N(s)\rho_g(X_s)ds.
\]
converges in probability to $0$, which is implied by Riemann integrability.

In the \emph{third step} we finally consider
\[
 \sum_{m=1}^{ \left\lfloor nT \right\rfloor} \eta_m^{n,3}= \sum_{m=1}^{ \left\lfloor nT \right\rfloor}\frac{1}{\sqrt{n}}\mathcal{B}^N(t^n_{m-1})(g(\sqrt{n}\Delta_m^n Y)- g(\beta_m^n)),
\]
which we split into $\sum_{m=1}^{ \left\lfloor nT \right\rfloor} \eta_m^{n,31}+ \eta_m^{n,32}$,
with
\begin{align*}
 \eta_m^{n,31}&=\frac{1}{\sqrt{n}}\mathcal{B}^N(t^n_{m-1})(g(\sqrt{n}\Delta_m^n D^Y(\chi'))- g(\beta_m^n)),\\
\eta_m^{n,32}&=\frac{1}{\sqrt{n}}\mathcal{B}^N(t^n_{m-1})(g(\sqrt{n}\Delta_m^n Y)- g(\sqrt{n}\Delta_m^n D^Y(\chi'))),
\end{align*}
where $D^Y(\chi')$ denotes the diffusion part of $Y$ defined in~\eqref{eq:diffpart}.
For the central limit theorem to hold true, $\sum_{m=1}^{ \left\lfloor nT \right\rfloor} \eta_m^{n,31}$ 
has to converge to $0$ in probability. To this end, it suffices to prove that
\begin{align}
\sum_{m=1}^{ \left\lfloor nT \right\rfloor}\mathbb{E}\left[ \|\eta_m^{n,31}\|^2 \right] &\to 0 \, ,\label{eq:secondmomenteta31}\\
 \sum_{m=1}^{ \left\lfloor nT \right\rfloor} \mathbb{E}\left[ \eta_m^{n,31}| \mathcal{F}_{t_{m-1}^n}\right] &\stackrel{\mathbb{P}}{\to} 0\label{eq:condmomenteta31}
\end{align}
holds true. Indeed, \eqref{eq:secondmomenteta31} implies (compare~\cite[Lemma 2.2.11]{JP:12})
\[
\sum_{m=1}^{ \left\lfloor nT \right\rfloor} \eta_m^{n,31}- \mathbb{E}\left[ \eta_m^{n,31}| \mathcal{F}_{t_{m-1}^n}\right]\stackrel{\mathbb{P}}{\to} 0
\]
and thus together with~\eqref{eq:condmomenteta31}, we have $\sum_{m=1}^{ \left\lfloor nT \right\rfloor} \eta_m^{n,31} \stackrel{\mathbb{P}}{\to} 0$.
Both requirements~\eqref{eq:secondmomenteta31} and~\eqref{eq:condmomenteta31} are met under the conditions on $X$, $Y$ and $g$ and the proof of~\eqref{eq:secondmomenteta31} can be found in~\cite[Proof of Theorem 5.1]{BNGJPS:06} and~\eqref{eq:condmomenteta31} is shown in~\cite[Section 5.3.3, C]{JP:12}. For the consistency result only~\eqref{eq:secondmomenteta31} has to be satisfied, which holds under the less restrictive assumptions $(H)$ and $(J)$ (see~\cite[Theorem 5.1]{BNGJPS:06}). Indeed, we only have to show that 
\[
\sum_{m=1}^{ \left\lfloor nT \right\rfloor} \frac{1}{\sqrt{n}}
\eta_m^{n,31} \stackrel{\mathbb{P}}{\to} 0,
\]
which follows from the Cauchy-Schwarz inequality and~\eqref{eq:secondmomenteta31}, since
\[
\sum_{m=1}^{ \left\lfloor nT \right\rfloor} \frac{1}{\sqrt{n}}
\mathbb{E}\left[\|\eta_m^{n,31}\|\right]\leq 
\left(T\sum_{m=1}^{ \left\lfloor nT \right\rfloor}\mathbb{E}\left[\|\eta_m^{n,31}\|^2\right]\right)^{\frac{1}{2}}  \to 0.
\]

Finally, according to the assumptions $(L(0))$ or $(L(\frac{1}{2}))$, respectively, we have for all $\varepsilon > 0$
\begin{align}\label{eq:jumppartcons}
\mathbb{P}\left[\left\|\sum_{m=1}^{ \left\lfloor nT \right\rfloor} \frac{1}{\sqrt{n}}\eta_m^{n,32}\right\| > \varepsilon\right]
&\leq \mathbb{P}\left[\tau_p \leq T\right]+\mathbb{P}\left[\left\|\sum_{m=1}^{ \left\lfloor nT \right\rfloor} 
\frac{1}{\sqrt{n}}\eta_m^{n,32}(p)\right\| > \varepsilon\right],
\end{align}
and 
\begin{align}\label{eq:jumppartclt}
\mathbb{P}\left[\left\|\sum_{m=1}^{ \left\lfloor nT \right\rfloor} \eta_m^{n,32}\right\| > \varepsilon\right]
& \leq \mathbb{P}\left[\tau_p \leq T\right]+\mathbb{P}\left[\left\|\sum_{m=1}^{ \left\lfloor nT \right\rfloor} 
\eta_m^{n,32}(p)\right\| > \varepsilon\right],
\end{align}
where 
\[
\eta_m^{n,32}(p)=\mathcal{B}^N(t^n_{m-1})(g(\sqrt{n}\Delta_m^n Y(p))- g(\sqrt{n}\Delta_m^n D^{Y(p)}(\chi'))).
\]
and $Y(p)$ is defined in~\eqref{eq:Yloc}. Due to $(L(0))$ or $(L(\frac{1}{2}))$, respectively, the second term on the right hand side of~\eqref{eq:jumppartcons} and~\eqref{eq:jumppartclt} respectively tends to $0$ as $n \to \infty$ for all $p$.
Since  $\mathbb{P}\left[\tau_p \leq T\right] \to 0$ as $p \to \infty$, we deduce
\[
\mathbb{P}\left[\left\|\sum_{m=1}^{ \left\lfloor nT \right\rfloor} \frac{1}{\sqrt{n}}\eta_m^{n,32}\right\| > \varepsilon\right] \to 0
\]
and
\[
\mathbb{P}\left[\left\|\sum_{m=1}^{ \left\lfloor nT \right\rfloor} \eta_m^{n,32}\right\| > \varepsilon\right] \to 0
\]
respectively, which completes the proof.
\end{proof}

The following proposition is an application of~\cite[Proposition 4.1]{BNGJPS:06} or~\cite[Theorem 4.2.1]{JP:12}. The proof is omitted as it works along the lines of~\cite[Proposition 4.1]{BNGJPS:06}.
Again we here only consider the one-dimensional case, i.e., $Y$ and $X$ are one-dimensional and $g: \mathbb{R} \to \mathbb{R}$.

\begin{proposition}\label{prop:1}
 Assume that the process $X$ is c\`adl\`ag and bounded by a constant. Let $g$ be a function of at most polynomial growth.
 Then the sequence of processes $(U^n_T(g,N))_{n \in \mathbb{N}}$ defined in~\eqref{eq:firststepU} is $C$-tight. Moreover, if $g$ is even, 
 then it converges stably in law to the process $U(g,N)$ given componentwise for $k \in \{-N, \ldots, N\}$ by
 \begin{align}\label{eq:limit_proc}
  U(g,N)_T^k=\sum_{k'=1}^{2(2N+1)}\int_0^T \delta_{s}^{kk'} dW^{k'}_{s},
 \end{align}
where
\[
 \sum_{r=1}^{2(2N+1)} \delta^{kr}_{s}\overline{\delta^{k'r}_{s}}=(\rho_{g^2}(X_s)-(\rho_{g}(X_s))^2)e^{-\im\frac{2\pi}{T}(k-k')s}
\]
and $W$ is a $2(2N+1)$-dimensional Brownian motion which is defined on an extension of the probability space $(\Omega, \mathcal{F}, (\mathcal{F}_t)_{t \geq 0}, \mathbb{P})$ and is independent
of the $\sigma$-field $\mathcal{F}$.
\end{proposition}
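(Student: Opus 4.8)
\emph{Proof proposal.} The plan is to recognise $U^n_T(g,N)$ as the terminal value of the partial-sum process of a triangular array of martingale differences and then to run the stable martingale central limit theorem along the lines of the proof of~\cite[Proposition 4.1]{BNGJPS:06} (equivalently~\cite[Theorem 4.2.1]{JP:12}), the only extra ingredient being the deterministic oscillatory weights $e^{-\im\frac{2\pi}{T}k t^n_{m-1}}$. Set $\zeta^n_m:=g(\beta^n_m)-\rho^n_{m-1}(g)$ and $\xi^n_m:=\tfrac1{\sqrt n}\,\mathcal B^N(t^n_{m-1})\,\zeta^n_m$, so that $U^n_t(g,N)=\sum_{m\le\lfloor nt\rfloor}\xi^n_m$ for $t\in[0,T]$. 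Conditionally on $\mathcal F_{t^n_{m-1}}$ the variable $\sqrt n\,\Delta^n_m Z$ is standard normal, hence $\beta^n_m\sim\mathcal N(0,X_{t^n_{m-1}})$ and $\mathbb E[g(\beta^n_m)\mid\mathcal F_{t^n_{m-1}}]=\rho_g(X_{t^n_{m-1}})=\rho^n_{m-1}(g)$; the weight being deterministic, $(\xi^n_m)_m$ is a martingale difference array for $(\mathcal F_{t^n_m})_m$. The first, routine step is to record, using boundedness of $X$ and polynomial growth of $g$ to dominate the Gaussian moments of $\beta^n_m$, that $\mathbb E[\|\zeta^n_m\|^q\mid\mathcal F_{t^n_{m-1}}]\le C$ uniformly in $m,n$ for every $q\ge1$; this gives $\sum_m\mathbb E[\|\xi^n_m\|^2\mid\mathcal F_{t^n_{m-1}}]$ bounded with increments of order $|t-s|$ and $\sum_m\mathbb E[\|\xi^n_m\|^4]\le Cn^{-1}\to0$, which together yield the $C$-tightness of $(U^n(g,N))_n$ for any $g$ of polynomial growth.

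Assuming now $g$ even, I would identify the limit by verifying the three hypotheses of the stable martingale CLT. The Lindeberg/negligibility condition is immediate from the fourth-moment bound. For the predictable quadratic covariation, put $v(x):=\rho_{g^2}(x)-\rho_g(x)^2$ and note $\xi^{n,k}_m\overline{\xi^{n,k'}_m}=\tfrac1n e^{-\im\frac{2\pi}{T}(k-k')t^n_{m-1}}(\zeta^n_m)^2$ together with $\mathbb E[(\zeta^n_m)^2\mid\mathcal F_{t^n_{m-1}}]=v(X_{t^n_{m-1}})$; then
\[
\sum_{m=1}^{\lfloor nt\rfloor}\mathbb E\!\left[\xi^{n,k}_m\overline{\xi^{n,k'}_m}\mid\mathcal F_{t^n_{m-1}}\right]
=\frac1n\sum_{m=1}^{\lfloor nt\rfloor}e^{-\im\frac{2\pi}{T}(k-k')t^n_{m-1}}v(X_{t^n_{m-1}})
\;\xrightarrow{\ \mathbb P\ }\;\int_0^t e^{-\im\frac{2\pi}{T}(k-k')s}v(X_s)\,ds,
\]
which is Riemann-sum convergence for the bounded c\`adl\`ag integrand $s\mapsto e^{-\im\frac{2\pi}{T}(k-k')s}v(X_s)$ (its discontinuity set is Lebesgue-null and $v$ is continuous by dominated convergence). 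Passing from the $2N+1$ complex coordinates to $2(2N+1)$ real ones and taking a measurable square root $\delta_s$ of the resulting nonnegative Hermitian density produces the $\delta$ of the statement. For the orthogonality condition $\sum_m\mathbb E[\xi^n_m\Delta^n_m M\mid\mathcal F_{t^n_{m-1}}]\xrightarrow{\mathbb P}0$ I would argue: for $M=Z$ it vanishes identically, since evenness of $g$ makes $g(\beta^n_m)$ an even function of the symmetric increment $\Delta^n_m Z$ while $\Delta^n_m Z$ is odd, and $\rho^n_{m-1}(g)\,\Delta^n_m Z$ also has zero conditional mean; for a bounded $\mathcal F$-martingale $M$ orthogonal to $Z$ one reduces, exactly as in~\cite[Proposition 4.1]{BNGJPS:06}, to the fact that $\xi^n_m$ is a function of $\mathcal F_{t^n_{m-1}}$ and of the single increment $\Delta^n_m Z$. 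The martingale CLT then delivers a limit $U(g,N)$ on an extension of $(\Omega,\mathcal F,\mathbb P)$ that is $\mathcal F$-conditionally a continuous Gaussian martingale with the above covariation; by the usual representation this equals $\int_0^{\cdot}\delta_s\,dW_s$ for a $2(2N+1)$-dimensional Brownian motion $W$ independent of $\mathcal F$, i.e.~\eqref{eq:limit_proc}.

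I expect the only genuinely delicate point to be the orthogonality step, and within it the orthogonality of the limit to \emph{all} bounded $\mathcal F$-martingales rather than merely to $Z$: this is what secures that the limit is $\mathcal F$-conditionally Gaussian and that the extension carrying $W$ can be chosen independent of $\mathcal F$, and it is the single place where the evenness hypothesis on $g$ is used essentially. Everything else — the moment estimates, the Riemann-sum convergence of the brackets, and the extraction of $\delta_s$ — is routine, and the oscillatory weights $e^{-\im\frac{2\pi}{T}k t^n_{m-1}}$, being bounded, deterministic and $\mathcal F_{t^n_{m-1}}$-measurable, carry through every estimate unchanged and only reshape the limiting covariance; this is exactly why the argument runs parallel to~\cite[Proposition 4.1]{BNGJPS:06}.
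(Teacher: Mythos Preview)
Your proposal is correct and follows exactly the route the paper indicates: the paper omits the proof and simply states that it ``works along the lines of~\cite[Proposition 4.1]{BNGJPS:06}'', which is precisely what you carry out, the only modification being the bounded deterministic weights $\mathcal B^N(t^n_{m-1})$ that pass harmlessly through every estimate and merely reshape the limiting covariance. One small clarification on your closing commentary: evenness of $g$ is used for the orthogonality to $Z$ (your odd-times-even computation), whereas the orthogonality to bounded martingales $M\perp Z$ does \emph{not} need evenness and is the part that actually relies on the representation-type argument from~\cite{BNGJPS:06}; you have the two ingredients right in the body of the argument, just slightly swapped in the summary.
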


\subsection{Jump Robust Estimators - Proof of Corollary~\ref{cor:jumprobust}}\label{sec:jumpspec}

The aim of this section is to specify classes of functions $g$ and conditions on the jumps such that condition $(L(0))$ or $(L(\frac{1}{2}))$, respectively, is satisfied and such that the estimator $V(Y,g,k)_T^n$ given in~\eqref{eq:Fouriercofest} is robust to jumps. In particular, Corollary~\ref{cor:jumprobust} is a consequence of the following proposition.

\begin{proposition}\phantomsection\label{prop:jumprobust}
\begin{enumerate}
\item\label{prop:jumprobustLLN} Let $g$ be continuous with $g(x)=o(\|x^2\|)$ as $\|x \| \to \infty$. Moreover, suppose that
\begin{align}\label{eq:boundcond}
\sup_{\omega} \int_0^T\left( \|b_t^Y\|+ \|X_t\|+ \int (\|\xi\|^2 \wedge 1)K_t(d\xi)\right) ds < \infty
\end{align}
and $\sup_{\omega, t\in[0,T]}\| \Delta X_t(\omega)\|< \infty$.
Then for all $m \in \{1, \ldots, \left\lfloor nt\right\rfloor\}$
\begin{align}\label{eq:L0}
\lim_{n \to \infty} \mathbb{E}\left[\left\|g(\sqrt{n}\Delta_m^n Y)- g(\sqrt{n}\Delta_m^n D^{Y}(\id))\right\|\right]=0.
\end{align}
\item\label{prop:jumprobustCLT} Suppose that $g$ satisfies for some $q\geq 0$ and some $0< r \leq r'<1$
\begin{align}\label{eq:growth}
\|g(x)-g(y)\|\leq C (1+\|y\|^q)(\|x-y\|^{r}+\|x-y\|^{r'}).
\end{align}
Moreover, let $\beta \in [0,2]$ and assume that for all $t \in [0,T]$
 \begin{align}
 \mathbb{E}\left[\int_{\|\xi\|\leq 1}\|\xi\|^{\beta} K_{t}(d\xi)\right]& < \infty,\label{eq:smalljumps}\\
 \mathbb{E}\left[\int_{\|\xi\|> 1}\|\xi\| K_{t}(d\xi)\right]&< \infty \label{eq:bigjumps}
 \end{align}
and that $b^Y(\chi 1_{\{\beta>1\}})=b^Y+\int (\chi(\xi)1_{\{\beta>1\}} -\chi(\xi)) K(d\xi)$ as defined in~\eqref{eq:moddrift} and $X$ is uniformly bounded.
\begin{enumerate}
\item If $\beta\leq r < 1$, then there exists some $\kappa \in (1, \frac{1}{r'}]$ such that
 \begin{align}\label{eq:estimatekappa}
  \mathbb{E} \left[\left\|g(\sqrt{n} \Delta_m^n Y)-g\left(\sqrt{n}\Delta_m^n D^Y(0))\right)\right\| \right]&\leq C \frac{1}{n^{\frac{1}{\kappa}-\frac{r'}{2}}},
 \end{align}
 for all $m \in \{1, \ldots, \left\lfloor nt\right\rfloor\}$.
 \item If $\beta > r$, then 
 \begin{align*}
  \mathbb{E} \left[\left\|g(\sqrt{n} \Delta_m^n Y)-g\left(\sqrt{n}\Delta_m^n D^Y(\chi 1_{\{\beta>1\}})\right)\right\| \right]&\leq C \frac{1}{n^{r\frac{2-\beta}{2\beta}}},
 \end{align*}
 for all $m \in \{1, \ldots, \left\lfloor nt\right\rfloor\}$.
 \end{enumerate}
\end{enumerate}
\end{proposition}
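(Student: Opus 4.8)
The plan is to bound, uniformly in $m$, the $L^{1}$-distance between $g$ evaluated at the rescaled full increment $\sqrt{n}\,\Delta_{m}^{n}Y$ and at the rescaled increment of the continuous part $D^{Y}(\chi')$ of $Y$ (drift plus Brownian integral), with the truncation function $\chi'$ chosen differently in the two assertions. As a first step I would pass to the localized setting of \cite[Section~3]{BNGJPS:06} and \cite[Section~4.4.1]{JP:12}, so that for \ref{prop:jumprobustLLN} the quantities appearing in \eqref{eq:boundcond} together with $\sup_{\omega,t}\|\Delta X_{t}\|$ are bounded, and for \ref{prop:jumprobustCLT} the modified drift $b^{Y}(\chi')$ and $X$ are bounded. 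This reduces both statements to moment estimates for the purely discontinuous semimartingale $J:=Y-D^{Y}(\chi')$, whose jump measure is compensated by (a part of) $K$.

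For \ref{prop:jumprobustLLN} I would split $J=J^{(\varepsilon)}+J_{(\varepsilon)}$, where $J_{(\varepsilon)}$ collects the jumps of size larger than $\varepsilon$ and $J^{(\varepsilon)}$ is the compensated sum of the jumps of size at most $\varepsilon$. Since \eqref{eq:boundcond} yields $\int_{\|\xi\|>\varepsilon}K_{s}(d\xi)\le\varepsilon^{-2}\int(\|\xi\|^{2}\wedge1)K_{s}(d\xi)$, the process $J_{(\varepsilon)}$ has finite activity, so $\mathbb{P}(\Delta_{m}^{n}J_{(\varepsilon)}\neq 0)=O(1/n)$, while $\mathbb{E}[\|\sqrt{n}\,\Delta_{m}^{n}J^{(\varepsilon)}\|^{2}]$ is controlled by an $\varepsilon$-modulus tending to $0$ as $\varepsilon\to0$ (dominated convergence from $\int(\|\xi\|^{2}\wedge1)K<\infty$). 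On $\{\Delta_{m}^{n}J_{(\varepsilon)}=0\}$ the two arguments of $g$ differ only by a term small in $L^{2}$; on the complementary event I would use $\|g(\cdot)\|\le C(1+\|\cdot\|^{2})$ (from $g(x)=o(\|x^{2}\|)$), truncate the finitely many large jumps of $Y$ at a level sent to infinity afterwards, and apply Cauchy--Schwarz against $\mathbb{P}(\Delta_{m}^{n}J_{(\varepsilon)}\neq 0)^{1/2}=O(n^{-1/2})$. Since $\sqrt{n}\,\Delta_{m}^{n}D^{Y}(\chi')$ has all moments bounded uniformly (Gaussian-type, $X$ bounded), a uniform-integrability argument in the spirit of \cite[Theorem~5.1]{BNGJPS:06} then lets one send first $n\to\infty$ and then $\varepsilon\to0$, which gives \eqref{eq:L0}.

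For \ref{prop:jumprobustCLT} I would invoke the H\"older-type modulus \eqref{eq:growth}: by Cauchy--Schwarz,
\begin{align*}
\mathbb{E}\big[\|g(\sqrt{n}\,\Delta_{m}^{n}Y)-g(\sqrt{n}\,\Delta_{m}^{n}D^{Y}(\chi'))\|\big]
&\le C\,\mathbb{E}\big[(1+\|\sqrt{n}\,\Delta_{m}^{n}D^{Y}(\chi')\|^{q})^{2}\big]^{1/2}\\
&\quad\times\;\mathbb{E}\big[\|\sqrt{n}\,\Delta_{m}^{n}J\|^{2r}+\|\sqrt{n}\,\Delta_{m}^{n}J\|^{2r'}\big]^{1/2},
\end{align*}
where the first factor is bounded by a constant, so everything comes down to bounding $\mathbb{E}[\|\Delta_{m}^{n}J\|^{r}]$ and $\mathbb{E}[\|\Delta_{m}^{n}J\|^{r'}]$; here the choice of $\chi'$ is decisive. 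If $\beta\le r$ one sets $\chi'=0$, so $J=Y-D^{Y}(0)$ is the fully compensated purely discontinuous martingale, and since $\beta\le r\le r'<1$ the assumptions \eqref{eq:smalljumps}--\eqref{eq:bigjumps} make $\int\|\xi\|^{r'}K_{s}(d\xi)$ bounded; a Bichteler--Jacod/Kunita moment inequality for jump integrals then gives $\mathbb{E}[\|\Delta_{m}^{n}J\|^{r'}]\le C\,n^{-1/\kappa}$ for a suitable $\kappa\in(1,1/r']$, whence the rescaled bound $n^{r'/2-1/\kappa}$ of \eqref{eq:estimatekappa}. If $\beta>r$, full compensation is unavailable at rate $1/n$, and one compares instead with $D^{Y}(\chi 1_{\{\beta>1\}})$: the small-jump martingale satisfies $\mathbb{E}[\|\Delta_{m}^{n}J\|^{r}]\le C\,n^{-r/\beta}$ (again by a jump-integral moment inequality, using \eqref{eq:smalljumps} with exponent $\beta$), so $\mathbb{E}[\|\sqrt{n}\,\Delta_{m}^{n}J\|^{r}]\le C\,n^{r/2-r/\beta}=C\,n^{-r(2-\beta)/(2\beta)}$, as claimed; the $r'$-contribution is of the same or smaller order.

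The main obstacle is \ref{prop:jumprobustCLT}: one must keep careful track of which part of $Y$ is subtracted in each regime, so that the remaining pieces are either a genuinely negligible drift (using that $b^{Y}(\chi')$ is c\`agl\`ad) or a martingale to which the moment inequality applies, and one must pin down the sharp exponents in the Bichteler--Jacod-type bounds for $r$- and $r'$-moments of jump integrals as functions of the activity index $\beta$ under $0<r\le r'<1$. In \ref{prop:jumprobustLLN} the only delicate point is the control of the possibly infinite-activity small jumps via the $\varepsilon$-splitting and the interchange of the limits $n\to\infty$ and $\varepsilon\to0$, which is routine localization and uniform-integrability bookkeeping.
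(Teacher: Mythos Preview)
Your plan follows the same route as the paper: part~\ref{prop:jumprobustLLN} is handled exactly as in \cite[Lemma~3.4.6]{JP:12} via the $\varepsilon$-splitting and uniform-integrability argument you sketch, and for part~\ref{prop:jumprobustCLT} the paper also applies the modulus \eqref{eq:growth}, separates the diffusion factor from the jump factor by H\"older, and then estimates the fractional moments of the jump increments using subadditivity $\|\sum_i x_i\|^{\alpha}\le\sum_i\|x_i\|^{\alpha}$ for $\alpha\le 1$ (when $\beta\le 1$) or a BDG-type bound (when $\beta>1$).

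There is, however, one genuine slip in your argument for \ref{prop:jumprobustCLT}. You write Cauchy--Schwarz, which produces the second factor
\[
\mathbb{E}\big[\|\sqrt{n}\,\Delta_{m}^{n}J\|^{2r}+\|\sqrt{n}\,\Delta_{m}^{n}J\|^{2r'}\big]^{1/2},
\]
but then you claim ``everything comes down to bounding $\mathbb{E}[\|\Delta_{m}^{n}J\|^{r}]$ and $\mathbb{E}[\|\Delta_{m}^{n}J\|^{r'}]$'' and proceed to derive the rate $n^{-1/\kappa}$ for a general $\kappa\in(1,1/r']$. These two things are not compatible: Cauchy--Schwarz forces $\kappa=2$, and then the jump exponent becomes $2r'$, which can exceed $1$ whenever $r'>\tfrac12$. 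In that range neither the elementary subadditivity bound nor the BDG argument yields the $n^{-1}$ rate you need, and the stated bound \eqref{eq:estimatekappa} with $\kappa\in(1,1/r']$ is not recovered. The fix is exactly what the paper does: replace Cauchy--Schwarz by H\"older with conjugate exponents $(\iota,\kappa)$ and \emph{choose} $\kappa\in(1,1/r']$, so that the jump moment to control is $\mathbb{E}[\|\Delta_m^n J\|^{r'\kappa}]$ with $r'\kappa\le 1$; then the subadditivity/BDG estimates go through and give the exponent $\tfrac{1}{\kappa}-\tfrac{r'}{2}$ in case~(a) and $r\tfrac{2-\beta}{2\beta}$ in case~(b). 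Once you make this change, your argument coincides with the paper's.
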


\begin{remark}\phantomsection\label{rem:locjump}
\begin{enumerate}
\item\label{rem:locjump1}
Note that we can localize $Y$ and consider processes $(Y(p))_p$ for which~\eqref{eq:boundcond} is automatically satisfied (see, e.g.,~\cite[Lemma 3.4.5]{JP:12}). Assertion (i) then holds true for $Y(p)$, which implies that $(L(0))$ is satisfied without further conditions on the characteristics of $Y$. Similarly the boundedness assumption on $b^Y(\chi1_{\{\beta >1\}})$ and $X$ in statement (ii) can also be obtained by localizing the original process.
\item Condition~\eqref{eq:growth} is satisfied by many classes of functions, e.g., globally H\"older continuous functions or $C^1$ functions which satisfy $\|g(x)\|\leq C(1+\|x\|^{r'})$ for $r'\in [0,1)$ and $\|\nabla g(x)\| \leq C(1+\|x\|^q)$ for some $q \geq 0$ (compare also~\cite[Remark 5.3.7]{JP:12}).
Let us also remark that, if~\eqref{eq:growth} is satisfied with $q=0$, $\kappa$ in~\eqref{eq:estimatekappa} can be chosen to be $1$.
\end{enumerate}
\end{remark}

\begin{proof}
The first assertion is proved in~\cite[Lemma 3.4.6]{JP:12}. Concerning (ii), we shall distinguish the cases $\beta \leq 1$ and $\beta >1$ and set without loss of generality $\chi(\xi)=1_{\{\|\xi\|\leq 1\}}\xi$. 
Due to the assumption on $g$, we have
\begin{equation}\label{eq:jumpestimate}
\begin{split}
 &\mathbb{E} \left[\left\|g(\sqrt{n} \Delta_m^n Y)-g\left(\sqrt{n}\Delta_m^n D^Y(\chi 1_{\{\beta>1\}})\right)\right\| \right]\\
 &\quad \leq 
  C\mathbb{E} \Bigg[(1+\|\sqrt{n}\Delta_m^n D^Y(\chi 1_{\{\beta>1\}})\|^q) \times \\
  &\qquad \qquad\times\Bigg( \Bigg\|\sqrt{n}\int_{t_{m-1}^n}^{t_{m}^n}\int_{\{\|\xi\| \leq 1\}} \xi(\mu^Y(d\xi,ds)-1_{\{\beta >1\}}K_{s}(d\xi)ds)\Bigg\|^r + \\
  &\qquad \qquad \quad+ \Bigg\|\sqrt{n}\int_{t_{m-1}^n}^{t_{m}^n}\int_{\{\|\xi\| > 1\}} \xi\mu^Y(d\xi,ds)\Bigg\|^{r} + \\
  &\qquad \qquad\quad+\Bigg\|\sqrt{n}\int_{t_{m-1}^n}^{t_{m}^n}\int_{\{\|\xi\| \leq 1\}} \xi(\mu^Y(d\xi,ds)-1_{\{\beta >1\}}K_{s}(d\xi)ds)\Bigg\|^{r'} + \\
 &\qquad \qquad\quad + \Bigg\|\sqrt{n}\int_{t_{m-1}^n}^{t_{m}^n}\int_{\{\|\xi\| > 1\}} \xi\mu^Y(d\xi,ds)\Bigg\|^{r'}\Bigg)\Bigg].
  \end{split}
  \end{equation}
  Applying H\"older's inequality, we get for $1<\iota,\kappa<\infty$ such that $\frac{1}{\iota}+\frac{1}{\kappa}=1$
\begin{align*}
&\mathbb{E}\left[\|\sqrt{n}\Delta_m^n D^Y(\chi 1_{\{\beta>1\}})\|^q J^{k}\right] \\
&\quad\leq 
\mathbb{E}\left[\|\sqrt{n}\Delta_m^n D^Y(\chi 1_{\{\beta>1\}})\|^{q\iota}\right]^{\frac{1}{\iota}}\mathbb{E}\left[J^{k\kappa}\right]^{\frac{1}{\kappa}}.
 \end{align*}
 Here, $J$ stands for each of the above expressions of the jumps and $k$ corresponds to $r$ or $r'$.
Due to our assumptions on $b^Y(\chi 1_{\{\beta>1\}})$ and $X$ and as a consequence of H\"older's, Jensen's and Burkholder-Davis-Gundy's inequality the first expectation is bounded by a constant $C_{q,\iota}$ for all $q >0$ and $1 <\iota <\infty$. 

Let us now consider for some $\alpha \in (0,1]$
\[
\mathbb{E}\left[\left\|\int_{t_{m-1}^n}^{t_{m}^n}\int_{\{\|\xi\| > 1\}} \xi\mu^Y(d\xi,ds)\right\|^{\alpha}\right].
\]
Then since, for $\alpha \in (0,1]$, $\|\sum_i x_i\|^{\alpha}\leq \sum_i \|x_i\|^{\alpha}$ and
due to~\eqref{eq:bigjumps}
\begin{align*}
\mathbb{E} \left[ {\left\|\int_{t_{m-1}^n}^{t_{m}^n}\int_{\{\|\xi\| > 1\}} \xi \mu^Y(d\xi,ds)\right\|}^{\alpha}\right]
&\leq  
\mathbb{E} \left[\int_{t_{m-1}^n}^{t_{m}^n}\int_{\{\|\xi\| > 1\}} \|\xi\|^{\alpha} \mu^Y(d\xi,ds)\right]\\
&=\mathbb{E} \left[\int_{t_{m-1}^n}^{t_{m}^n}\int_{\{\|\xi\| > 1\}} \|\xi\|^{\alpha} K_{s}(d\xi)ds\right]\\
&\leq C \frac{1}{n}.
\end{align*}

Consider now the term
\[
\mathbb{E} \left[\left\|\int_{t_{m-1}^n}^{t_{m}^n}\int_{\{\|\xi\| \leq 1\}} \xi(\mu^Y(d\xi,ds)-1_{\{\beta >1\}}K_{s}(d\xi)ds)\right\|^{\alpha}\right]
\]
for $\beta \leq 1$.
Then by H\"older's inequality and~\eqref{eq:smalljumps}, we have
\begin{align*}
\mathbb{E} \left[ {\left\|\int_{t_{m-1}^n}^{t_{m}^n}\int_{\{\|\xi\| \leq 1\}} \xi \mu^Y(d\xi,ds)\right\|}^{\alpha}\right]&=
 \left(\mathbb{E} \left[ {\left\|\sum_{t_{m-1}\leq s < t_m} \Delta_s Y 1_{\{\Delta_s Y \leq 1\}}\right\|}^{\alpha \vee \beta}\right]\right)^{\frac{\alpha}{\beta} \wedge 1}\\
&\leq  \left(\mathbb{E} \left[ \sum_{t_{m-1}\leq s < t_m}\left\| \Delta_s Y\right\|^{\alpha \vee \beta} 1_{\{\Delta_s Y \leq 1\}}\right]\right)^{\frac{\alpha}{\beta} \wedge 1}\\
&= \left( \mathbb{E} \left[\int_{t_{m-1}^n}^{t_{m}^n}\int_{\{\|\xi\| \leq 1\}} \|\xi\|^{\beta} K_{s}(d\xi)ds\right]\right)^{\frac{\alpha}{\beta} \wedge 1}\\
&\quad\leq C \frac{1}{n^{1\wedge\frac{\alpha}{\beta} }}.
\end{align*}

If $ \beta > 1$, we obtain a similar estimate by using H\"older and Burkholder-Davis-Gundy's inequality, the fact that
$\frac{\beta}{2} \leq 1$ and~\eqref{eq:smalljumps}:
\begin{align*}
 & \mathbb{E}\left[ \left\|\int_{t_{m-1}^n}^{t_{m}^n}\int_{\{\|\xi\| \leq 1\}}\xi(\mu^Y(d\xi,ds)-K_{s}(d\xi)ds)\right\|^{\alpha}\right]\\
 & \quad \leq   \left(\mathbb{E}\left[ \left\|\int_{t_{m-1}^n}^{t_{m}^n}\int_{\{\|\xi\| \leq 1\}}\xi(\mu^Y(d\xi,ds)-K_{s}(d\xi)ds)\right\|^{\beta}\right]\right)^{\frac{\alpha}{\beta}}\\
 & \quad \leq   \left(\mathbb{E}\left[ \left(\int_{t_{m-1}^n}^{t_{m}^n}\int_{\{\|\xi\| \leq 1\}}\|\xi\|^2\mu^Y(d\xi,ds)\right)^{\frac{\beta}{2}}\right]\right)^{\frac{\alpha}{\beta}}\\
 &\quad \leq    \left(\mathbb{E}\left[ \left(\sum_{t_{m-1}\leq s < t_m}\left\| \Delta_s Y\right\|^{\beta} 1_{\{\Delta_s Y \leq 1\}}\right)\right]\right)^{\frac{\alpha}{\beta}}\\
  &\quad \leq   \left(\mathbb{E}\left[\int_{t_{m-1}^n}^{t_{m}^n}\int_{\{\|\xi\| \leq 1\}} \|\xi\|^{\beta} K_s(d\xi)ds \right]\right)^{\frac{\alpha}{\beta}}\\
  &\quad \leq C  \frac{1}{n^{ \frac{\alpha}{\beta}}}= C \frac{1}{n^{ \frac{\alpha}{\beta} \wedge 1}}.
\end{align*}
The last equality follows from the fact that for $\beta >1$, $\frac{\alpha}{\beta} <1$.
Using these inequalities and setting $\alpha$ equal to $r\kappa$ and $r'\kappa'$, respectively, for some $1 < \kappa \leq \frac{1}{r}$ and $1 < \kappa' \leq \frac{1}{r'}$, we can estimate~\eqref{eq:jumpestimate} by
\[
 \mathbb{E} \left[\left\|g(\sqrt{n} \Delta_m^n Y)-g\left(\sqrt{n}\Delta_m^n D^Y(\chi 1_{\{\beta>1\}})\right)\right\| \right]\leq
C \frac{1}{n^{\min((\frac{1}{\kappa}\wedge \frac{r}{\beta})-\frac{r}{2},(\frac{1}{\kappa'}\wedge \frac{r'}{\beta})-\frac{r'}{2})}}.
\]
If $\beta\leq r$, we can choose some $\kappa \in (1,\frac{1}{r'}]$ such that this expression is simplified to
\[
 \mathbb{E} \left[\left\|g(\sqrt{n} \Delta_m^n Y)-g\left(\sqrt{n}\Delta_m^n D^Y(\chi 1_{\{\beta>1\}})\right)\right\| \right]\leq
C \frac{1}{n^{\frac{1}{\kappa}-\frac{r'}{2}}}
\]
and if $\beta >r$, we obtain
\[
 \mathbb{E} \left[\left\|g(\sqrt{n} \Delta_m^n Y)-g\left(\sqrt{n}\Delta_m^n D^Y(\chi 1_{\{\beta>1\}})\right)\right\| \right]\leq
C \frac{1}{n^{r\frac{(2-\beta)}{2\beta}}}.
\]
\end{proof}

\subsubsection{Proof of Corollary~\ref{cor:jumprobust}}

Using Proposition~\ref{prop:jumprobust} above, we are now prepared to prove Corollary~\ref{cor:jumprobust}.

\begin{proof}
Assertion (i) is a direct consequence of Theorem~\ref{th:Fouriercofestconv}, Condition $(L(0))$, Proposition~\ref{prop:jumprobust}~\ref{prop:jumprobustLLN} and Remark~\ref{rem:locjump}~\ref{rem:locjump1}.

Again, in view of Theorem~\ref{th:Fouriercofestconv}, the proof of the second statement consists in verifying Condition $(L(\frac{1}{2}))$.
Since $\beta < 1$, we can consider $b^Y(0)=b^Y-\int \chi(\xi)K(d\xi)$
which is -- since it is a c\`agl\`ad process -- locally bounded.
 By the localization procedure (see~\cite[Lemma 4.4.8]{JP:12}) we can therefore consider processes $(Y(p))_p$, for which $b^{Y(p)}(0)$ and the diffusion characteristic $X$ are uniformly bounded 
and~\eqref{eq:bigjumps} and~\eqref{eq:smalljumps} for $\beta<1$ are satisfied, as required in Proposition~\ref{prop:jumprobust}~\ref{prop:jumprobustCLT}.

This proposition then yields in the case $\frac{\beta}{2-\beta}<\beta\leq r$
\begin{align}\label{eq:firstcase}
  \mathbb{E} \left[\left\|g(\sqrt{n} \Delta_m^n Y(p))-g\left(\sqrt{n}\Delta_m^n D^{Y(p)}(0)\right)\right\| \right]\leq C \frac{1}{n^{\frac{1}{\kappa}-\frac{r'}{2}}},
  \end{align}
for some $\kappa \in (1, \frac{1}{r'}]$ and in the case $\frac{\beta}{2-\beta}<r< \beta$
\[
  \mathbb{E} \left[\left\|g(\sqrt{n} \Delta_m^n Y(p))-g\left(\sqrt{n}\Delta_m^n D^{Y(p)}(0)\right)\right\| \right]\leq C \frac{1}{n^{r\frac{2-\beta}{2\beta}}}.
  \]
By choosing $1 <\kappa < \frac{2}{1+r'} < \frac{1}{r'}$ in~\eqref{eq:firstcase},  $(L(\frac{1}{2}))$  is satisfied in both cases
and the assertion follows.
\end{proof}

\section{Proofs of the Asymptotic Properties of the Estimator for the Instantaneous Covariance Process}\label{sec:spotvar}

In this section we provide the proofs of Theorem~\ref{th:consistency} and Theorem~\ref{th:Fourierestconv}.
For the study of the asymptotic properties of the instantaneous covariance estimator $\widehat{\rho_{g}(X)}^{n,N}_{t}$ given in~\eqref{eq:estimatorrhoX}, that is,
\[
\widehat{\rho_{g}(X)}^{n,N}_{t}:=\frac{1}{T}\sum_{k=-N}^{N} \left(1-\frac{|k|}{N}\right)e^{\im \frac{2\pi}{T} kt}V(Y,g,k)_T^{n}
\]
we need to analyze two different errors, namely
\begin{enumerate}
\item
the error which comes from the fact that we use estimators for the Fourier coefficients instead of the true quantities, that is,
\begin{align}\label{eq:staterror}
E^{n,N}(t):=\widehat{\rho_{g}(X)}^{n,N}_{t}-\sum_{k=-N}^{N} \left(1-\frac{|k|}{N}\right)e^{\im \frac{2\pi}{T} kt}\mathcal{F}(\rho_g(X))(k),
\end{align}
\item the error which we make by truncating the Fourier-F\'ejer sum, that is,
\begin{align}\label{eq:E0}
E^N_0(t):=\sum_{k=-N}^{N} \left(1-\frac{|k|}{N}\right)e^{\im \frac{2\pi}{T} kt}\mathcal{F}(\rho_g(X))(k)-\frac{\rho_g(X_{t-})+\rho_g(X_t)}{2}.
\end{align}
\end{enumerate}

The term $E^N_0(t)$ can be treated with well known deterministic results on Fourier-F\'ejer series, whereas the statistical error $E^{n,N}(t)$ needs to be decomposed in several parts and handled with probabilistic methods, in particular limit theorems for triangular arrays. To apply these methods let us first remark that $\widehat{\rho_{g}(X)}^{n,N}_{t}$ can be written as
\begin{align}\label{eq:estFejerkernel}
\widehat{\rho_{g}(X)}^{n,N}_{t}=\frac{1}{T}\sum_{m=1}^{ \left\lfloor nT \right\rfloor}  \frac{1}{n} F_N\left(\frac{2\pi}{T}(t-t_{m-1}^n)\right)g(\sqrt{n}\Delta_m^n Y),
\end{align}
where $F_N$ denotes the F\'ejer kernel defined by
\begin{align}\label{eq:Fejerkernel}
F_N(x):=\frac{1}{N+1}\frac{\sin\left((N+1)\frac{x}{2}\right)^2}{\sin\left(\frac{x}{2}\right)^2}=
\sum_{k=-N}^{k=N}\left(1-\frac{|k|}{N}\right) e^{\im kx}.
\end{align}
Using this representation, the term $E^{n,N}(t)$ given in~\eqref{eq:staterror} can now be written as
\begin{align*}
\frac{1}{T}\left(\sum_{m=1}^{ \left\lfloor nT \right\rfloor}  \frac{1}{n} F_N\left(\frac{2\pi}{T}(t-t_{m-1}^n)\right)g(\sqrt{n}\Delta_m^n Y)-\int_0^T  F_N\left(\frac{2\pi}{T}(t-s)\right)\rho_g(X_s)ds\right).
\end{align*}
This can be further decomposed into $\sum_{i=1}^3 E^{n,N}_i(t)$, where
\begin{align}
E^{n,N}_1(t)&:=\frac{1}{T}\sum_{m=1}^{ \left\lfloor nT \right\rfloor} \frac{1}{n} F_N\left(\frac{2\pi}{T}(t-t_{m-1}^n)\right)(g(\sqrt{n}\Delta_m^n Y)- g(\beta_m^n)),\label{eq:E1}\\
E^{n,N}_2(t)&:=\frac{1}{T}\sum_{m=1}^{ \left\lfloor nT \right\rfloor}  \frac{1}{n} F_N\left(\frac{2\pi}{T}(t-t_{m-1}^n)\right)(g(\beta_m^n)-\rho_{m-1}^n(g)),\label{eq:E2}\\
E^{n,N}_3(t)&:=\frac{1}{T}\Bigg(\sum_{m=1}^{ \left\lfloor nT \right\rfloor}  \frac{1}{n} F_N\left(\frac{2\pi}{T}(t-t_{m-1}^n)\right)\rho_{m-1}^n(g)\\
&\quad-\int_0^T  F_N\left(\frac{2\pi}{T}(t-s)\right)\rho_{g}(X_s)ds\Bigg)\label{eq:E3},
\end{align}
with $\beta_m^n$ and $\rho_m^n$ given in~\eqref{eq:betamn} and~\eqref{eq:rhomn}.
In view of this decomposition, we start with the following lemma which deals with convergence of Riemann sums for the F\'ejer kernel and which is crucial for studying asymptotic properties of the instantaneous covariance estimators.

\begin{lemma}\label{lem:Fejerconv}
Let $n, N \in \mathbb{N}$ and denote
\[
x_0^n:=-\pi<x_1^n:=-\pi+\frac{1}{n}<\cdots<x_m^n:=-\pi+\frac{m}{n}<x_{\left\lfloor n 2 \pi \right\rfloor }^n=-\pi+\frac{\left\lfloor n 2 \pi \right\rfloor }{n}
\]
and consider the F\'ejer kernel $F_N$ given in~\eqref{eq:Fejerkernel}.
Moreover, let $ \gamma > 1$ and suppose that  $\lim \frac{n}{N^{\gamma}}=K$ for some constant $K >0$ and let $h$ be a $\delta$-H\"older continuous function with $\delta \in (0,1]$.
Then, the following identities holds true:
\begin{align}
\lim_{n,N \to \infty} \sum_{m=1}^{\left\lfloor n 2 \pi \right\rfloor} \frac{1}{n}F_N(x_{m-1}^n)&=\lim_{N \to \infty}\int_{-\pi}^{\pi} F_N(x)dx=2 \pi,\label{eq:FN}\\
\lim_{n,N \to \infty} \sum_{m=1}^{\left\lfloor n 2 \pi \right\rfloor} \frac{1}{n} \frac{F_N^2(x_{m-1}^n)}{N}&=
\lim_{N \to \infty}\int_{-\pi}^{\pi} \frac{F_N^2(x)}{N}dx=
\frac{4\pi}{3},\label{eq:FN2}\\
\lim_{n,N \to \infty} \sum_{m=1}^{\left\lfloor n 2 \pi \right\rfloor} \frac{1}{n} \frac{F_N^2(y-x_{m-1}^n)}{N}h(x_{m-1}^n)&=
\lim_{N \to \infty}\int_{-\pi}^{\pi} \frac{F_N^2(y-x)}{N}h(x)dx=\frac{4\pi}{3}h(y), \label{eq:FN3}
\end{align}
Furthermore, we have the following error estimates:
\begin{align*}
\left|  \sum_{m=1}^{\left\lfloor n 2 \pi \right\rfloor} \frac{1}{n} F_N(x_{m-1}^n)-\int_{-\pi}^{\pi} F_N(x)dx\right|& \leq C \frac{N}{n}=\frac{C}{K}\frac{1}{N^{\gamma-1}},\\
\left|  \sum_{m=1}^{\left\lfloor n 2 \pi \right\rfloor} \frac{1}{n} h(x_{m-1}^n)F_N(x_{m-1}^n)-\int_{-\pi}^{\pi}h(x) F_N(x)dx\right| &\leq C \max\left(\frac{N}{n},\frac{1}{n^{\delta}}\right)\\
&=\widetilde{C}\max\left(\frac{1}{N^{\gamma-1}},\frac{1}{N^{\gamma\delta}}\right).
\end{align*}
\end{lemma}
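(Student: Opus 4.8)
The argument rests on the classical toolkit for the Fej\'er kernel, and I would organise it around four elementary facts: $F_N\ge 0$; integrating the Fourier expansion term by term, $\int_{-\pi}^{\pi}F_N(x)\,dx=2\pi$, whence $\|F_N\|_{L^1([-\pi,\pi])}=2\pi$; $\|F_N\|_\infty=F_N(0)\le CN$; and, $F_N$ being a trigonometric polynomial of degree $N$, Bernstein's inequality yields $\|F_N'\|_{L^1}\le N\|F_N\|_{L^1}=2\pi N$. I would combine these with two generic quadrature estimates on the grid $x_m^n=-\pi+m/n$: for absolutely continuous $\phi$,
\[
\Bigl|\,\sum_{m=1}^{\lfloor 2\pi n\rfloor}\tfrac1n\,\phi(x_{m-1}^n)-\int_{-\pi}^{\pi}\phi(x)\,dx\,\Bigr|\;\le\;\tfrac1n\|\phi'\|_{L^1}+\tfrac{C}{n}\|\phi\|_\infty
\]
(telescoping $\phi(x_{m-1}^n)-\phi(x)=-\int_{x_{m-1}^n}^{x}\phi'$ for the first term, the second absorbing the boundary interval of length $<1/n$), and for $\delta$-H\"older $\phi$ the smoothness-adapted bound $|\sum_m\tfrac1n\phi(x_{m-1}^n)-\int\phi|\le C[\phi]_\delta\,n^{-\delta}+\tfrac Cn\|\phi\|_\infty$.

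For the $F_N$-statements: since $\int_{-\pi}^{\pi}F_N=2\pi$ for every $N$, equation~(\ref{eq:FN}) and its error bound follow by feeding $\phi=F_N$ into the first estimate, producing an error $\le\tfrac1n\|F_N'\|_{L^1}+\tfrac Cn\|F_N\|_\infty\le C\tfrac Nn$, which is $O(N^{1-\gamma})$ under $n/N^\gamma\to K$ with $\gamma>1$, in particular $\to0$. For the $h$-weighted version I would write the error as $A_n+B_n$ with $A_n=\sum_m h(x_{m-1}^n)\bigl[\tfrac1n F_N(x_{m-1}^n)-\int_{x_{m-1}^n}^{x_m^n}F_N\bigr]$ and $B_n=\sum_m\int_{x_{m-1}^n}^{x_m^n}\bigl(h(x_{m-1}^n)-h(x)\bigr)F_N(x)\,dx$, and bound $|A_n|\le\|h\|_\infty\tfrac1n\|F_N'\|_{L^1}\le C\|h\|_\infty\tfrac Nn$ and $|B_n|\le[h]_\delta n^{-\delta}\|F_N\|_{L^1}=2\pi[h]_\delta n^{-\delta}$; this yields $\le C\max(N/n,n^{-\delta})=\widetilde C\max(N^{1-\gamma},N^{-\gamma\delta})$. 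It is essential to separate the quadrature error from the modulus-of-continuity error in this way: a direct H\"older estimate of the product $hF_N$ would cost $[F_N]_\delta\asymp N^{1+\delta}$ and produce the wrong rate.

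For the $F_N^2/N$-statements the content is that $\tfrac1N F_N^2$ is an approximate identity of total mass $\tfrac{4\pi}{3}$. By Parseval, $\int_{-\pi}^{\pi}F_N^2=2\pi\sum_{|k|\le N}(1-|k|/N)^2$, so $\tfrac1N\int_{-\pi}^{\pi}F_N^2=2\pi\cdot\tfrac1N\sum_{|k|\le N}(1-|k|/N)^2\to 2\pi\int_{-1}^{1}(1-|u|)^2\,du=\tfrac{4\pi}{3}$, recognising the middle term as a Riemann sum. Concentration: on $\{\eta\le|x|\le\pi\}$ one has $\sin^2(x/2)\ge\sin^2(\eta/2)$, hence $F_N(x)\le C_\eta/N$ and $\int_{\eta\le|x|\le\pi}\tfrac1N F_N^2\le C_\eta/N^3\to0$. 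Consequently, tested against continuous functions $\tfrac1N F_N^2$ behaves like $\tfrac{4\pi}{3}\delta_0$; regarding $h$ as $2\pi$-periodic and substituting $u=y-x$ gives $\int_{-\pi}^{\pi}\tfrac1N F_N^2(y-x)h(x)\,dx=\int_{-\pi}^{\pi}\tfrac1N F_N^2(u)h(y-u)\,du\to\tfrac{4\pi}{3}h(y)$ by splitting at $|u|<\eta$ and letting $\eta\downarrow0$, H\"older continuity bounding the near-diagonal part by $C\eta^\delta$. The Riemann sums then converge to these integrals by the first quadrature estimate applied to $\phi=\tfrac1N F_N^2$ (respectively to $\tfrac1N F_N^2(y-\cdot)h(\cdot)$, split as $A_n+B_n$), at a cost $\le\tfrac1n\bigl\|(\tfrac1N F_N^2)'\bigr\|_{L^1}+\tfrac Cn\bigl\|\tfrac1N F_N^2\bigr\|_\infty\le\tfrac{2}{nN}\|F_N\|_\infty\|F_N'\|_{L^1}+C\tfrac Nn\le C\tfrac Nn\to0$, plus in the weighted case a term $\le C[h]_\delta n^{-\delta}\bigl\|\tfrac1N F_N^2\bigr\|_{L^1}\to0$, the latter $L^1$-norm being bounded (indeed $\to\tfrac{4\pi}{3}$).

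The only genuinely delicate point is pinning down the constant $\tfrac{4\pi}{3}$: it comes from Parseval together with the identification $\tfrac1N\sum_{|k|\le N}(1-|k|/N)^2\to\int_{-1}^{1}(1-|u|)^2\,du=\tfrac23$, and from the concentration estimate that localises this mass at the origin. This $\tfrac23$ is precisely the source of the factor $2/3$ in the asymptotic variance announced in the introduction, so it is the heart of the lemma; everything else is routine quadrature bookkeeping, modulo the $A_n+B_n$ split that makes the H\"older error bound come out with the stated rate.
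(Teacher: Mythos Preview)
Your argument is correct and follows the same overall architecture as the paper's proof: compute the integrals $\int F_N=2\pi$ and $\tfrac1N\int F_N^2\to\tfrac{4\pi}{3}$ exactly via the Fourier representation, then control the Riemann-sum error separately, and for the $h$-weighted estimate use the same $A_n+B_n$ split (factor out $\|h\|_\infty$ on the quadrature part, use H\"older continuity of $h$ against $\|F_N\|_{L^1}$ on the other). The $\tfrac{4\pi}{3}$ computation is identical.

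Where you differ is in the mechanism for the quadrature bound. The paper works by hand: it partitions $[-\pi,\pi]$ into the $N+1$ intervals between consecutive zeros of $F_N$, applies the mean value theorem twice on each, and uses the pointwise relations $\max|F_N'|\lesssim N\max F_N$ and $\max F_N\lesssim N\int F_N$ on each such interval to arrive at $CN/n$. You instead invoke Bernstein's inequality $\|F_N'\|_{L^1}\le N\|F_N\|_{L^1}=2\pi N$ once and feed it into a one-line generic quadrature estimate $\tfrac1n\|\phi'\|_{L^1}+\tfrac Cn\|\phi\|_\infty$. This is cleaner and more transparent (and it transfers verbatim to $\phi=\tfrac1N F_N^2$ via $\|(\tfrac1N F_N^2)'\|_{L^1}\le\tfrac2N\|F_N\|_\infty\|F_N'\|_{L^1}\le CN$), whereas the paper's argument is more self-contained in that it does not import Bernstein. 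You also spell out the approximate-identity/concentration step behind \eqref{eq:FN3}, which the paper leaves implicit.
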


\begin{remark}\phantomsection\label{rem:Fejerconv}
\begin{enumerate}
\item\label{rem:Fejerconv1}
In the sequel we shall consider analogues of expressions~\eqref{eq:FN2} and~\eqref{eq:FN3} on the interval $[0,T]$, that is, 
\begin{align*}
\lim_{N \to \infty}\int_{0}^{T} \frac{F_N^2(\frac{2\pi}{T}x)}{N}dx&=\frac{2 T}{3},\\
\lim_{N \to \infty}\int_{0}^{T} \frac{F_N^2(\frac{2\pi}{T}(y-x))}{N}h(x)dx&=\frac{2 T}{3}h(y),
\end{align*}
which can be derived similarly as in the proof below.
\item The expression $F_N(x)^2/N$ appropriately normalized
corresponds to the so-called Jackson kernel (see, e.g.,~\cite[Section 4.2]{L:86}).
\end{enumerate}
\end{remark}

\begin{proof}
We have convergence of the Riemann sums in~\eqref{eq:FN}
to the corresponding integral if the grid becomes finer within the zeros of $F_N$. Since the distance between $2$ zeros is 
$\frac{2\pi}{N+1}$ ($\frac{4\pi}{N+1}$ for the $2$ zeros closest to the origin), this is the case if only if $\gamma > 1$.
By the same argument the left hand side of~\eqref{eq:FN2} and~\eqref{eq:FN3} converges to the corresponding integrals
if and only if $\gamma > 1$. The assertion then follows from the following calculation
\begin{align*}
&\int_{-\pi}^{\pi}\frac{1}{N}\sum_{k,l}\left(1-\frac{|k|}{N}\right)\left(1-\frac{|l|}{N}\right) e^{\im kx}e^{\im lx}dx\\
&\quad=\frac{1}{N}\sum_{k,l}\left(1-\frac{|k|}{N}\right)\left(1-\frac{|l|}{N}\right) \int_{-\pi}^{\pi}e^{\im (k+l)x}dx\\
&\quad =2\pi\frac{1}{N}\sum_{k}\left(1-\frac{|k|}{N}\right)^2\\
&\quad =2\pi\frac{2N^2+1}{3N^2} \to \frac{4 \pi}{3}.
\end{align*}

Concerning the error estimates, we have due to the mean value theorem
\begin{align*}
&\left|  \sum_{m=1}^{\left\lfloor n 2 \pi \right\rfloor} \frac{1}{n} F_N(x_{m-1}^n)-\int_{-\pi}^{\pi} F_N(x)dx\right| \\
&\quad =\left|  \sum_{m=1}^{\left\lfloor n 2 \pi \right\rfloor} \frac{1}{n} \left(F_N(x_{m-1}^n)-  F_N(y_{m-1}^n)\right)\right|\\
&\quad\leq \sum_{m=1}^{\left\lfloor n 2 \pi \right\rfloor} \frac{1}{n} \left|F_N(x_{m-1}^n)-  F_N(y_{m-1}^n)\right|\\
&\quad\leq \sum_{k=-\frac{(N+1)}{2}}^{\frac{(N-1)}{2}} \sum_{x_m \in [\frac{2\pi k}{N+1}, \frac{2\pi (k+1)}{N+1}]}\frac{1}{n} \left|F_N(x_{m-1}^n)-  F_N(y_{m-1}^n)\right|
\end{align*}
with some $y_{m-1}^n \in [x_{m-1}^n, x_m^n]$. Using again the mean value theorem, we can further estimate
\begin{align*}
&\sum_{x_m \in [\frac{2\pi k}{N+1}, \frac{2\pi (k+1)}{N+1}]}\frac{1}{n} \left|F_N(x_{m-1}^n)-  F_N(y_{m-1}^n)\right|
\\&\quad\leq\sum_{\eta_{m-1}^n \in [x_{m-1}^n, y_{m-1}^n],\, x_m \in [\frac{2\pi k}{N+1}, \frac{2\pi (k+1)}{N+1}]}\frac{1}{n^2} \left|F_N'(\eta_{m-1}^n)\right|\\
&\quad\leq \max_{\eta \in [\frac{2\pi k}{N+1}, \frac{2\pi (k+1)}{N+1}]}\left|F_N'(\eta)\right| \frac{2\pi n}{N+1}\frac{1}{n^2}.
\end{align*}
Since
\[
\max_{\eta \in [\frac{2\pi k}{N+1}, \frac{2\pi (k+1)}{N+1}]}\left|F_N'(\eta)\right|\frac{1}{N}\leq C\max_{x \in [\frac{2\pi k}{N+1}, \frac{2\pi (k+1)}{N+1}]}\left|F_N(x)\right|,
\]
and since
\[
\max_{x \in [\frac{2\pi k}{N+1}, \frac{2\pi (k+1)}{N+1}]}\left|F_N(x)\right|\frac{1}{N} \leq C \int_{\frac{2\pi k}{N+1}}^{\frac{2\pi (k+1)}{N+1}} F_N(x)dx,
\]
it follows that
\[
\max_{\eta \in [\frac{2\pi k}{N+1}, \frac{2\pi (k+1)}{N+1}]}\left|F_N'(\eta)\right|\leq C N^2\int_{\frac{2\pi k}{N+1}}^{\frac{2\pi (k+1)}{N+1}} F_N(x)dx.
\]
Hence, we have
\begin{align*}
&\sum_{k=-\frac{(N+1)}{2}}^{\frac{(N-1)}{2}} \sum_{x_m \in [\frac{2\pi k}{N+1}, \frac{2\pi (k+1)}{N+1}]}\frac{1}{n} \left|F_N(x_{m-1}^n)-  F_N(y_{m-1}^n)\right|\\
&\quad\leq\frac{2\pi n}{N+1}\frac{1}{n^2}\sum_{k=-\frac{(N+1)}{2}}^{\frac{(N-1)}{2}} \max_{\eta \in [\frac{2\pi k}{N+1}, \frac{2\pi (k+1)}{N+1}]}\left|F_N'(\eta)\right|\\
&\quad\leq \frac{2\pi n}{N+1}\frac{1}{n^2} \sum_{k=-\frac{(N+1)}{2}}^{\frac{(N-1)}{2}}C N^2\int_{\frac{2\pi k}{N+1}}^{\frac{2\pi (k+1)}{N+1}} F_N(x)dx\\
&\quad\leq C  \frac{2\pi n}{N+1}\frac{1}{n^2} N^2 \underbrace{\int_{-\pi}^{\pi} F_N(x) dx}_{=2\pi}\\
&\quad\leq C \frac{N}{n}=C \frac{N^{\gamma}}{n}\frac{1}{N^{\gamma-1}},
\end{align*}
which yields the assertion.
Concerning
\[
\left|  \sum_{m=1}^{\left\lfloor n 2 \pi \right\rfloor} \frac{1}{n} h(x_{m-1}^n)F_N(x_{m-1}^n)-\int_{-\pi}^{\pi}h(x) F_N(x)dx\right| 
\]
we can estimate it by
\begin{align*}
&\left|  \sum_{m=1}^{\left\lfloor n 2 \pi \right\rfloor} \frac{1}{n}h(x_{m-1}^n) F_N(x_{m-1}^n)-\int_{x_{m-1}^n}^{x_m} h(x_{m-1}^n)F_N(x)dx\right| + \\
 &\quad+\left|  \sum_{m=1}^{\left\lfloor n 2 \pi \right\rfloor} \int_{x_{m-1}^n}^{x_m^n} (h(x_{m-1}^n)-h(x))F_N(x)dx\right|\\
 &\leq \max_{m} |h(x_{m-1}^n)| \sum_{m=1}^{\left\lfloor n 2 \pi \right\rfloor}\left| \frac{1}{n} F_N(x_{m-1}^n)-\int_{x_{m-1}^n}^{x_m} F_N(x)dx\right| + \\
 &\quad+\frac{C}{n^{\delta}} \int_{-\pi}^{\pi} F_N(x)dx\\
 &\leq C \left(\frac{N}{n}+\frac{1}{n^{\delta}}\right)\\
 &\leq C \max\left(\frac{N}{n},\frac{1}{n^{\delta}}\right)\\
 &=\widetilde{C}\max\left(\frac{1}{N^{\gamma-1}},\frac{1}{N^{\gamma\delta}}\right).
\end{align*}
\end{proof}

\subsection{Consistency - Proof of Theorem~\ref{th:consistency}}

Using the above lemma we can now proceed to establish consistency of the estimator given in~\eqref{eq:estimatorrhoX}.

\begin{proof}
Similar as in~\cite{BNGJPS:06} or~\cite[Section 4.4.1]{JP:12} and the proof of Theorem~\ref{th:Fouriercofestconv}, we strengthen assumption $(H)$ together with Condition~\eqref{eq:moddrift} to boundedness of $b^Y(\chi')$ and $X$.
As already explained in the introduction of this section, we decompose
 \[
 \left(\widehat{\rho_{g}(X)}^{n,N}_{t}-\frac{\rho_{g}(X_{t-})+\rho_{g}(X_t)}{2}\right) 
 \] 
 into
$E^N_0(t)+\sum_{i=1}^3 E^{n,N}_i(t)$, where $E^N_0(t)$ is given in~\eqref{eq:E0} and $E^{n,N}_i(t)$, $i=1,2,3$ in~\eqref{eq:E1} -~\eqref{eq:E3}.

By F\'ejer's theorem the term $\|E^N_0(t)\|$ converges to $0$ a.s., since $X$ is supposed to have c\`adl\`ag paths.
 
As a consequence of the proof of Theorem~\ref{th:Fourierestconv} below, the term $E^{n,N}_2(t)$ converges to $0$ in probability under the assumptions ($H$) and ($J$). 

Again by the c\`adl\`ag property and the boundedness of $t \to \rho_{g}(X_t)$ (recall the boundedness condition on $X$ and the fact that $g$ has at most polynomial growth) and the assumption $\gamma >1$, $\|E^{n,N}_3(t)\|$ converges a.s. to $0$ by Riemann integrability (cf. Lemma~\ref{lem:Fejerconv}).

Finally we have to focus on the $E^{n,N}_1(t)$, which we decompose into
\begin{align*}
TE^{n,N}_1(t)&= \sum_{m=1}^{ \left\lfloor nT \right\rfloor} \frac{1}{n} F_N\left(\frac{2\pi}{T}(t-t_{m-1}^n)\right)(g(\sqrt{n}\Delta_m^n Y)- g(\sqrt{n}\Delta_m^n D^Y(\chi')))\\
 &\quad+
 \sum_{m=1}^{ \left\lfloor nT \right\rfloor} \frac{1}{n} F_N\left(\frac{2\pi}{T}(t-t_{m-1}^n)\right)(g(\sqrt{n}\Delta_m^n D^Y(\chi'))- g(\beta_m^n)), 
\end{align*}
where $D^Y(\chi')$ is defined in~\eqref{eq:diffpart}.
The second term converges in probability to $0$,  
since it can be estimated by
\begin{align*}
 \sup_m\mathbb{E}\left[\|g(\sqrt{n}\Delta_m^n D^Y(\chi'))- g(\beta_m^n)\|\right] \sum_{m=1}^{ \left\lfloor nT \right\rfloor} \frac{1}{n} F_N\left(\frac{2\pi}{T}(t-t_{m-1}^n)\right)
\end{align*}
and we have  $\sup_m\mathbb{E}\left[\|g(\sqrt{n}\Delta_m^n D^Y(\chi'))- g(\beta_m^n)\|\right] \to 0$ 
(see the proof of~\cite[Lemma 5.3, Lemma 5.4]{BNGJPS:06}).
Writing $\zeta_m^n=(g(\sqrt{n}\Delta_m^n Y)- g(\sqrt{n}\Delta_m^n D^Y(\chi')))$ and $\zeta_m^n(p)=(g(\sqrt{n}\Delta_m^n Y(p))- g(\sqrt{n}\Delta_m^n D^Y(p)(\chi')))$, where $Y(p)$ is defined in~\eqref{eq:Yloc}, we have
\begin{align}\label{eq:Fourierconsjump}
&\mathbb{P}\left[\left\|\sum_{m=1}^{ \left\lfloor nT \right\rfloor} \frac{1}{n} F_N\left(\frac{2\pi}{T}(t-t_{m-1}^n)\right) \zeta_m^n\right\| > \varepsilon\right] \\&\quad\leq \mathbb{P}\left[\tau_p \leq T\right]+\mathbb{P}\left[\left\|\sum_{m=1}^{ \left\lfloor nT \right\rfloor} \frac{1}{n} F_N\left(\frac{2\pi}{T}(t-t_{m-1}^n)\right) \zeta_m^n(p)\right\| > \varepsilon\right]\notag.
\end{align}
Since $\mathbb{P}\left[\tau_p \leq T\right] \to 0$ as $p\to \infty$ and since $\mathbb{E}\left[\|\sum_{m=1}^{ \left\lfloor nT \right\rfloor} \frac{1}{n} F_N\left(\frac{2\pi}{T}(t-t_{m-1}^n)\right) \zeta_m^n(p)\|\right]$ can be estimated by
\[
\sup_m\mathbb{E}\left[\|\zeta_m^n(p)\|\right] \sum_{m=1}^{ \left\lfloor nT \right\rfloor} \frac{1}{n} F_N\left(\frac{2\pi}{T}(t-t_{m-1}^n)\right),
\]
which converges to $0$ for all $p$ due to Assumption $(L(0))$, ~\eqref{eq:Fourierconsjump} tends to $0$ as well.
\end{proof}

\subsection{Central Limit Theorem - Proof of Theorem~\ref{th:Fourierestconv}}

This section is dedicated to the proof of the central limit theorem for (functions of) the instantaneous covariance. 

\begin{proof}
Similarly as in the proof of Theorem~\ref{th:Fouriercofestconv}, we strengthen the assumption $(H1)$ to $(SH1)$, that is,
$b^Y(\chi')$, defined in~\eqref{eq:moddrift}, $X$ and the differential characteristics of the drift and the diffusion part of $\sqrt{X}$ and~\eqref{eq:comproot} are bounded by a constant. Analogously to the proof of
Theorem~\ref{th:consistency} we decompose 
\[
\sqrt{\frac{nT}{N}}\left(\widehat{\rho_{g}(X)}^{n,N}_{t}-\rho_{g}(X)\right) 
\]
into $\sqrt{\frac{nT}{N}} (\widetilde{E}_0^N(t)+\sum_{i=1}^3  E^{n,N}_i(t))$, where $E^{n,N}_i(t)$, $i=1,2,3$ are defined in~\eqref{eq:E1} -~\eqref{eq:E3} and 
$\widetilde{E}_0^N(t)$ is here given by 
\[
\widetilde{E}_0^N(t)=\sum_{k=-N}^{N} \left(1-\frac{|k|}{N}\right)e^{\im \frac{2\pi}{T} kt}\mathcal{F}(\rho_g(X))(k)-\rho_g(X_t).
\]
Denoting by $A_N$ the set
\[
A_N=\left\{ X \textrm{ jumps in } \left[t-\frac{T}{N}, t+\frac{T}{N}\right]\right\},
\]
 we have the following estimate for $\mathbb{P}\left[\sqrt{\frac{nT}{N}}\|\widetilde{E}^N_0(t)\| > \varepsilon\right]$:
\begin{align}\label{eq:decjump}
\mathbb{P}\left[\sqrt{\frac{nT}{N}}\|\widetilde{E}^N_0(t)\| > \varepsilon\right] \leq \mathbb{P}\left[A_N\right]+\mathbb{P}\left[\sqrt{\frac{nT}{N}}\|\widetilde{E}^N_0(t)\|1_{A^c_N} > \varepsilon\right].
\end{align}
Due to the assumption that $X$ has no fixed time of discontinuity, the first term converges to $0$. 
By the assumption of finite activity jumps and H\"older continuity of $t \mapsto X_t$ between two jumps, we have 
\[
\sqrt{\frac{nT}{N}}\|\widetilde{E}^N_0(t)(\omega)\|1_{A^c_N}(\omega) \leq C_{\omega} \sqrt{\frac{n}{N}}N^{-\delta}=\widetilde{C}_{\omega}N^{-\frac{1+2\delta-\gamma}{2}},
\]
for some finitely valued number $C_{\omega}$ (depending on $\omega$) (compare~\cite[Eq.~13]{MM:09} and~\cite{Z:49}). Since  $\gamma < 1+2\delta $ by assumption, the second term in the above decomposition thus also converges to $0$.

Due to Lemma~\ref{lem:Fejerconv} and again the assumption of finite activity jumps and H\"older continuity of $t \mapsto X_t$ between two jumps ,  $\sqrt{\frac{nT}{N}}\|E^{n,N}_3(t)\|1_{A^c_N}$ can be estimated by
\[
C_{\omega}\sqrt{\frac{n}{N}}\max\left(\frac{N}{n},\frac{1}{n^{\delta}}\right)=
 \widetilde{C}_{\omega} N^{\frac{\gamma-1}{2}}\max\left(\frac{1}{N^{\gamma-1}}, \frac{1}{N^{\delta\gamma}}\right),
\]
which converges to $0$, since $\frac{\gamma -1}{2}< \min(\gamma -1, \delta \gamma)$ again as a consequence of the assumption  $\gamma < 1+2\delta $. A similar decomposition as in~\eqref{eq:decjump} yields $ \sqrt{\frac{nT}{N}}\|E^{n,N}_3(t)\| \stackrel{\mathbb{P}}{\to} 0$.

Let us now consider $\sqrt{\frac{nT}{N}}E^{n,N}_1(t)$, which we decompose into
\begin{align*}
 &\sqrt{\frac{n}{NT}}\sum_{m=1}^{ \left\lfloor nT \right\rfloor} \frac{1}{n}F_N\left(\frac{2\pi}{T} (t-t_{m-1}^n)\right)
(g(\sqrt{n}\Delta_m^n Y)- g(\sqrt{n}\Delta_m^n D^Y(\chi')))+\\
&\quad \quad +\sqrt{\frac{n}{NT}}\sum_{m=1}^{ \left\lfloor nT \right\rfloor} \frac{1}{n}F_N\left(\frac{2\pi}{T} (t-t_{m-1}^n)\right)
(g(\sqrt{n}\Delta_m^n D^Y(\chi'))- g(\beta_m^n)).
\end{align*}
In view of Lemma~\cite[Lemma 2.2.11]{JP:12} it is sufficient to prove that 
\begin{align}
\sum_{m=1}^{ \left\lfloor nT \right\rfloor}\mathbb{E}\left[\|U_m^{n,1}\|^2\right] \to 0, \label{eq:quadconv}\\
\sum_{m=1}^{ \left\lfloor nT \right\rfloor}\mathbb{E}\left[U_m^{n,1}| \mathcal{F}_{t^n_{m-1}}\right] \stackrel{\mathbb{P}}{\to} 0,\label{eq:condexpect}
\end{align}
and
\begin{align}
\mathbb{P}\left[\left\|\sum_{m=1}^{ \left\lfloor nT \right\rfloor}U_m^{n,2}\right\| > \varepsilon \right] \to 0 \, , 
\label{eq:jumpsclt}
\end{align}
where 
\begin{align*}
U_m^{n,1}&=\sqrt{\frac{1}{nNT}}F_N\left(\frac{2\pi}{T} (t-t_{m-1}^n)\right)
(g(\sqrt{n}\Delta_m^n D^Y(\chi'))- g(\beta_m^n)) \, ,\\
U_m^{n,2}&=\sqrt{\frac{1}{nNT}}F_N\left(\frac{2\pi}{T} (t-t_{m-1}^n)\right)
(g(\sqrt{n}\Delta_m^n Y)- g(\sqrt{n}\Delta_m^n D^Y(\chi')).
\end{align*}
Let us first focus on $U_m^{n,1}$. By~\cite[Lemma 5.3 and Lemma 5.4]{BNGJPS:06}) and no fixed time of discontinuity of $X$ we have
\[
\sup_{m}\mathbb{E}\left[\left\|g(\sqrt{n}\Delta_m^n D^Y(\chi'))- g(\beta_m^n))\right\|^2\right] \to 0
\]
and we can therefore estimate~\eqref{eq:quadconv} by
\[
\sup_m\mathbb{E}\left[\left\|g(\sqrt{n}\Delta_m^n D(\chi')- g(\beta_m^n))\right\|^2\right] \frac{1}{T}\sum_{m=1}^{ \left\lfloor nT \right\rfloor}  \frac{1}{nN}F^2_N\left(\frac{2\pi}{T} (t-t_{m-1}^n)\right),
\]
which converges to $0$ due to Lemma~\ref{lem:Fejerconv}.

Concerning~\eqref{eq:condexpect}, it is possible to decompose  
\[
 g(\sqrt{n}\Delta_m^n D^Y(\chi'))- g(\beta_m^n)=A_m^n+B_m^n,
\]
where for all $m$, $\mathbb{E}\left[A_m^n| \mathcal{F}_{t^n_{m-1}}\right]=0$ and $\mathbb{E}\left[\|B_m^n\|\right] \leq\frac{1}{n^{\eta}}$ with $\eta >\frac{\gamma-1}{2 \gamma}$ (see~\cite[Section 5.3.3, C]{JP:12}.
Then
\begin{align*}
& \sqrt{\frac{n}{NT}}\sum_{m=1}^{ \left\lfloor nT \right\rfloor}\frac{1}{n}F_N\left(\frac{2\pi}{T} (t-t_{m-1}^n)\right)
\mathbb{E}\left[g(\sqrt{n}\Delta_m^n D^Y(\chi'))- g(\beta_m^n)|\mathcal{F}_{t^n_{m-1}}\right]\\
&\quad =\sqrt{\frac{n}{NT}}\sum_{m=1}^{ \left\lfloor nT \right\rfloor}\frac{1}{n}F_N\left(\frac{2\pi}{T} (t-t_{m-1}^n)\right)
\mathbb{E}\left[A_m^n+B_m^n|\mathcal{F}_{t^n_{m-1}}\right]\\
&\quad\leq C n^{\frac{\gamma-1}{2 \gamma}}\sup_m\mathbb{E}\left[\|B_m^n\|\right]
\sum_{m=1}^{ \left\lfloor nT \right\rfloor}\frac{1}{n}F_N\left(\frac{2\pi}{T} (t-t_{m-1}^n)\right),
\end{align*}
converges $0$ due to Lemma~\ref{lem:Fejerconv} and thus yields~\eqref{eq:condexpect}. 
Condition~\eqref{eq:jumpsclt} follows from the assumption $L(\eta)$ for $\eta \geq\frac{\gamma-1}{2 \gamma}$ and a similar estimate as in~\eqref{eq:Fourierconsjump}.

Let us now turn to $\sqrt{\frac{nT}{N}}E^{n,N}_2(t)$, which we write as
\begin{align*}
 &\sqrt{\frac{nT}{N}}E^{n,N}_2(t) =\sum_{m=1}^{ \left\lfloor nT \right\rfloor} Z_m^{n,N},
\end{align*}
where
\[
 Z^{n,N}_m=\sqrt{\frac{1}{nNT}} F_N\left(\frac{2\pi}{T}(t-t_{m-1}^n)\right)
 (g(\beta_m^n)-\rho_{m-1}^n(g)).
\]
Since $\mathbb{E}\left[(g(\beta_m^n)-\rho_{m-1}^n(g))|\mathcal{F}_{t_{m-1}^n}\right]=0$, we also have 
\[
\lim_{n,N \to \infty} \sum_{m=1}^{ \left\lfloor nT \right\rfloor} \mathbb{E}\left[Z^{n,N}_m|\mathcal{F}_{t_{m-1}^n}\right]=0.
\]
Moreover,
\begin{align*}
 &\mathbb{E}\left[Z^{n,N}_{m,ij}\overline Z^{n,N}_{m,i'j'}|\mathcal{F}_{t_{m-1}^n}\right]\\
 &\quad=\frac{1}{T}
 (\rho_{m-1}^n(g_{ij}g_{i'j'})-\rho_{m-1}^n(g_{ij})\rho_{m-1}^n(g_{i'j'}))\frac{1}{nN}F^2_N\left(\frac{2\pi}{T} (t-t_{m-1}^n)\right).
\end{align*}
Thus we have
\begin{align*}
&\sum_{m=1}^{ \left\lfloor nT \right\rfloor} \mathbb{E}\left[Z^{n,N}_{m,ij}\overline Z^{n,N}_{m,i'j'}|\mathcal{F}_{t_{m-1}^n}\right]\\
&\quad=\sum_{m=1}^{ \left\lfloor nT \right\rfloor}\frac{1}{T} (\rho_{m-1}^n(g_{ij}g_{i'j'})-\rho_{m-1}^n(g_{ij})\rho_{m-1}^n(g_{i'j'}))\frac{1}{n} \frac{F^2_N\left(\frac{2\pi}{T} (t-t_{m-1}^n)\right)}{N}.
\end{align*}
Due to Lemma~\ref{lem:Fejerconv} and Remark~\ref{rem:Fejerconv} the limit of this expression is given by
\begin{align*}
V^{iji'j'}_t:&=\lim_{N \to \infty}\frac{1}{T}\int_0^T(\rho_{g_{ij}g_{i'j'}}(X_s)-\rho_{g_{ij}}(X_s)\rho_{g_{i'j'}}(X_s))\frac{F^2_{ N}\left(\frac{2\pi}{T} (t-s)\right)}{N}ds\\
&=\frac{2}{3}(\rho_{g_{ij}g_{i'j'}}(X_t)-\rho_{g_{ij}}(X_t)\rho_{g_{i'j'}}(X_t)).
\end{align*}

In view of Theorem~\cite[Theorem IX.7.28]{JS:03} it remains to verify that
\begin{align}\label{eq:like4mom}
\sum_{m=1}^{ \left\lfloor nT \right\rfloor} \mathbb{E}\left[\|Z^{n,N}_m\|^2 1_{\{\|Z^{n,N}_m\| > \varepsilon\}}|\mathcal{F}_{t_{m-1}^n}\right] &\stackrel{\mathbb{P}}{\to}0
\end{align}
for all $\varepsilon >0$.
By the Cauchy-Schwarz inequality we have
\[
  \mathbb{E}\left[\|Z^{n,N}_m\|^2 1_{\{|Z^{n,N}_m| > \varepsilon\}}|\mathcal{F}_{t_{m-1}^n}\right]\leq 
   \sqrt{\mathbb{E}\left[\|Z^{n,N}_m\|^4 |\mathcal{F}_{t_{m-1}^n}\right]}\sqrt{\mathbb{E}\left[ 1_{\{|Z^{n,N}_m| > \varepsilon\}}|\mathcal{F}_{t_{m-1}^n}\right]}.
\]
By definition of $Z_m^{n,N}$ and the polynomial growth of $g$, we can further estimate
\begin{align*}
  & \sqrt{\mathbb{E}\left[\|Z^{n,N}_m\|^4 |\mathcal{F}_{t_{m-1}^n}\right]} \\
  =& \sqrt{\mathbb{E}\left[\frac{1}{T^2}\left\|g\left(\beta_m^n\right)-\rho_{m-1}^n(g)\right\|^4 \left(\frac{1}{nN}\right)^2
 F_N^4\left(\frac{2\pi}{T} (t-t_{m-1}^n)\right) |\mathcal{F}_{t_{m-1}^n}\right]}\\
 \leq & C \frac{1}{T}  \frac{1}{n}
\frac{ F_N^2\left(\frac{2\pi}{T} (t-t_{m-1}^n)\right)}{N}.
\end{align*}
Taking again the polynomial growth of $g$ into account, there exists some $p \geq 0$ such that
\[
 \|g\left(\beta_m^n\right)-\rho_{m-1}^n(g)\| \leq C (1+\|U_{m-1}^n\|^p), \quad  \mathbb{P}\textrm{-a.s.} \, ,
\]
where $U_{m-1}^n=\sqrt{n}\sqrt{X_{t_{m-1}^n}}\Delta_m^n Z~\sim N(0,X_{t_{m-1}^n})$.
Thus
 \begin{align*}
& \mathbb{E}\left[ 1_{\{\|Z^{n,N}_m\| > \varepsilon\}}|\mathcal{F}_{t_{m-1}^n}\right] \\
=& \mathbb{P}\left[ \|Z^{n,N}_m\| > \varepsilon |\mathcal{F}_{t_{m-1}^n}\right]\\
 \leq & \mathbb{P}\left[ C (1+\|U_{m-1}^n\|^p)\sqrt{ \frac{1}{nNT}}
 F_N\left(\frac{2\pi}{T} (t-t_{m-1}^n)\right) > \varepsilon|\mathcal{F}_{t_{m-1}^n}\right]\\
 = & \mathbb{P}\left[ \|U_{m-1}^n\| > \underbrace{\left(\frac{1}{C}\left( \sqrt{nNT} F^{-1}_N\left(\frac{2\pi}{T} (t-t_{m-1}^n)\right)\varepsilon-1\right)\right)^{\frac{1}{p}}}_{\to \infty \textrm{ as } n,N \to \infty}|\mathcal{F}_{t_{m-1}^n}\right].
 \end{align*}
Since this tends to $0$, we can estimate~\eqref{eq:like4mom} by
\begin{align*}
&\sum_{m=1}^{ \left\lfloor nT \right\rfloor}  C \frac{1}{T}  \frac{1}{n}
 \frac{F_N^2\left(\frac{2\pi}{T} (t-t_{m-1}^n)\right)}{N}\sqrt{\mathbb{P}\left[ \|Z^{n,N}_m\| > \varepsilon |\mathcal{F}_{t_{m-1}^n}\right]}\stackrel{\mathbb{P}}{\to}0,
\end{align*}
where convergence to $0$ follows from Lemma~\ref{lem:Fejerconv} and the above estimate for
\[
\mathbb{P}\left[ \|Z^{n,N}_m\| > \varepsilon |\mathcal{F}_{t_{m-1}^n}\right] \, ,
\]
hence Equation~\eqref{eq:like4mom} is verified.  Moreover, similarly as in the proof of~\cite[Proposition 4.1]{BNGJPS:06}, we have
\[
  \mathbb{E}\left[Z^{n,N}_m\Delta_m^n Z|\mathcal{F}_{t_{m-1}^n}\right]=0
\]
and
\[
  \mathbb{E}\left[Z^{n,N}_{m}\Delta_m^n M|\mathcal{F}_{t_{m-1}^n}\right]=0
\]
for any bounded martingale $M$ which is orthogonal to the Brownian motion $Z$. 
The assertion is now implied by all these estimates and~\cite[Theorem IX.7.28]{JS:03}. 
\end{proof}

\section{Covariance of Covariance Estimation - Proof of Theorem~\ref{th:covcov} }\label{sec:covcov}

In this section we prove Theorem~\ref{th:covcov}, i.e., a central limit theorem for the estimator of the integrated covariance of $X$ obtained from the reconstructed path $\widehat{X}^{n,N}$.

\begin{proof}
Let us decompose
\begin{equation}
\begin{split}\label{eq:decomp2step}
 \sqrt{m} \left( V(\widehat{X}^{n,N},f,0)_T^m-\int_0^T \rho_f(q_s) ds\right)&= \sqrt{m}\left( V(\widehat{X}^{n,N},f,0)_T^m-V(X,f,0)_T^m\right) + \\
  &+\sqrt{m}\left( V(X,f,0)_T^m-\int_0^T \rho_f(q_s) ds)\right).
 \end{split}
 \end{equation}
In view~\cite[Theorem 5.3.5 and 5.3.6]{JP:12}, the second term converges to the stated Gaussian random variable. 
Hence we only have to prove that the first term converges to $0$ in probability.
Due to the assumptions on $f$, it can be estimated by
\begin{align*}
 &\sqrt{m}\left\| V(\widehat{X}^{n,N},f,0)_T^m-V(X,f,0)_T^m\right\|\\
&\quad \leq
 \frac{1}{\sqrt{m}}\sum_{p=1}^{ \left\lfloor mT \right\rfloor}  \left\|f\left(\sqrt{m}\Delta_p^m\widehat{X}^{n,N}\right)
 -f\left(\sqrt{m}\Delta_p^m X\right)\right\| \\
 &\quad\leq  m^{\frac{\alpha-1}{2}}C\sum_{p=1}^{ \left\lfloor mT \right\rfloor} 
 \left\|\Delta_p^m\widehat{X}^{n,N}-\Delta_p^m X\right\|^{\alpha}\\
 &\quad\leq m^{\frac{\alpha-1}{2}}2C\sum_{p=0}^{ \left\lfloor mT \right\rfloor} 
 \left\|\widehat{X}_{t_p^m}^{n,N}- X_{t_p^m}\right\|^{\alpha}
\end{align*}
Denoting by $A_p^m$ the set 
\begin{align}\label{eq:contset}
A_p^m=\{\omega \,|\, t \mapsto X_t(\omega) \textrm{ is continuous in } [t_{p-1}^m, t_{p+1}^m]  \},\quad p=1,\ldots,\left\lfloor mT \right\rfloor,
\end{align}
we further split the above expression into
\begin{align*}
 m^{\frac{\alpha-1}{2}}\sum_{p=0}^{ \left\lfloor mT \right\rfloor}\left\|\widehat{X}_{t_p^m}^{n,N} -X_{t^m_p}\right\|^{\alpha}1_{A_p^m}
 + m^{\frac{\alpha-1}{2}}\sum_{p=0}^{ \left\lfloor mT \right\rfloor}\left\|\widehat{X}_{t_p^m}^{n,N} -X_{t^m_p}\right\|^{\alpha}1_{(A_p^m)^c}.
\end{align*}
Due to the assumption of finite activity jumps, the second sum contains a.s.~only finitely many summands and thus converges to $0$ a.s. since $\alpha <1$.
By Lemma~\ref{lem:supconvergence} below, the relation between $m$ and $n$ and the condition on $\iota$, the
first sum converges to $0$ in probability. 
\end{proof}

\begin{lemma}~\label{lem:supconvergence}
Let the conditions of Theorem~\ref{th:covcov} be in force (with possibly $\alpha=1$) and denote by 
$A_p^m$ the sets defined in~\eqref{eq:contset}.
Then
\begin{align}\label{eq:consitencyrate}
m^{\frac{\alpha-1}{2}}\sum_{p=0}^{ \left\lfloor mT \right\rfloor} 
 \left\|\widehat{X}_{t^m_p}^{n,N}- X_{t^m_p}\right\|^{\alpha}1_{A_p^m} \stackrel{\mathbb{P}}{\to}0.
\end{align}
\end{lemma}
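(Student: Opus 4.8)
The plan is to reduce the statement to an $L^1$-type estimate for the reconstruction error of $\widehat{\rho_g(X)}^{n,N}$ on the coarse grid, and then to reuse — in a quantitative form — the error decomposition already employed in the proof of Theorem~\ref{th:Fourierestconv}. First I would dispose of the nonlinear map $\rho_g^{-1}$. Since $x\mapsto(\nabla\rho_g(x))^{-1}$ exists and is continuous, $\rho_g$ is a $C^1$-diffeomorphism onto its image and $\rho_g^{-1}$ is locally Lipschitz; after the localization procedure of~\cite[Section~3]{BNGJPS:06} (strengthening $(H1)$ to $(SH1)$) the process $X$ takes values in a fixed compact set $\mathcal{K}\subset S_d^{++}$, so $\rho_g(X_t)\in\rho_g(\mathcal{K})$ and $\rho_g^{-1}$ is Lipschitz, with some constant $L$, on a compact neighbourhood $\mathcal{L}'$ of $\rho_g(\mathcal{K})$. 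On the event $\{\widehat{\rho_g(X)}^{n,N}_{t^m_p}\in\mathcal{L}'\}$ one has $\|\widehat{X}^{n,N}_{t^m_p}-X_{t^m_p}\|\le L\,\|\widehat{\rho_g(X)}^{n,N}_{t^m_p}-\rho_g(X_{t^m_p})\|$, while the probability of the complementary event, taken over all $p=0,\dots,\lfloor mT\rfloor$ simultaneously, vanishes by a Markov bound using the moment estimates below together with $\iota<\frac{\gamma-1}{\gamma}$ (and $m/N\to0$). Hence it suffices to prove
\[
m^{\frac{\alpha-1}{2}}\sum_{p=0}^{\lfloor mT\rfloor}\bigl\|\widehat{\rho_g(X)}^{n,N}_{t^m_p}-\rho_g(X_{t^m_p})\bigr\|^{\alpha}\,1_{A_p^m}\stackrel{\mathbb{P}}{\to}0 .
\]

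Next I would split $\widehat{\rho_g(X)}^{n,N}_{t}-\rho_g(X_t)=\widetilde{E}^N_0(t)+E^{n,N}_1(t)+E^{n,N}_2(t)+E^{n,N}_3(t)$ exactly as in the proof of Theorem~\ref{th:Fourierestconv}, and treat the four summands separately, using $\|\sum_i a_i\|^{\alpha}\le\sum_i\|a_i\|^{\alpha}$ for $\alpha\le1$. The martingale-type term $E^{n,N}_2$ is the sharp one: as in that proof one gets $\sup_{t}\mathbb{E}\|E^{n,N}_2(t)\|^2\le C\,\tfrac{N}{n}$ with a genuine constant, since the limiting variance $V^{iji'j'}_t$ is bounded under $(SH1)$; by Jensen $\mathbb{E}\|E^{n,N}_2(t)\|^{\alpha}\le C(N/n)^{\alpha/2}$, so the corresponding piece is bounded in $L^1$ by $C\,m^{\frac{\alpha+1}{2}}(N/n)^{\alpha/2}\asymp C\,n^{\frac12(\iota(\alpha+1)-\alpha(\gamma-1)/\gamma)}$, which tends to $0$ \emph{precisely} because $\iota<\frac{\gamma-1}{\gamma}\frac{\alpha}{1+\alpha}$. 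For $E^{n,N}_1$ I would extract from the same proof a quantitative version: the diffusion–approximation part satisfies $\sup_m\mathbb{E}\|g(\sqrt n\Delta^n_m D^Y(\chi'))-g(\beta^n_m)\|^2\le C/n$ (because $\sqrt n\Delta^n_m D^Y(\chi')-\beta^n_m$ is of order $n^{-1/2}$ in every $L^p$ once $\sqrt X$ has bounded characteristics), and the jump part is controlled, after localizing $Y$ to $Y(p)$ and letting $p\to\infty$, by $(L(\eta))$ with $\eta\ge\frac{\gamma-1}{2\gamma}$; together these give $\sup_t\mathbb{E}\|E^{n,N}_1(t)\|=o(\sqrt{N/n})$, so the associated piece is $o(1)\cdot O(m^{\frac{\alpha+1}{2}}(N/n)^{\alpha/2})\to0$ in $L^1$.

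For the remaining two, pathwise–small terms $\widetilde{E}^N_0$ and $E^{n,N}_3$ I would work on the continuity sets $A_p^m$: Lemma~\ref{lem:Fejerconv} and the classical Fejér-summation rate for $\delta$-H\"older functions (cf.~\cite{Z:49}) yield on $A_p^m$ a bound of the form $\|\widetilde{E}^N_0(t^m_p)\|+\|E^{n,N}_3(t^m_p)\|\le C\bigl(L_\omega N^{-\delta}+n^{-\delta}+R\,(N\,\mathrm{dist}(t^m_p,\mathcal{J}_\omega))^{-1}\bigr)$, where $\mathcal{J}_\omega$ is the a.s.\ finite set of jump times of $X$, $R=\sup_t\|\rho_g(X_t)\|<\infty$, and $L_\omega$ the a.s.\ finite (all-moments, by Kolmogorov's criterion, cf.\ Remark~\ref{rm:CLTremarks}(1)) piecewise $\delta$-H\"older constant of $X$ on $[0,T]$; here finite jump activity and the absence of fixed times of discontinuity are used. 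Since the grid points are $1/m$-spaced and $\mathcal{J}_\omega$ is finite, $\sum_{p}\mathrm{dist}(t^m_p,\mathcal{J}_\omega)^{-\alpha}=O_\omega(m)$ for $\alpha<1$ and $O_\omega(m\log m)$ for $\alpha=1$, so after multiplying by $m^{\frac{\alpha-1}{2}}$ the total $\widetilde{E}^N_0+E^{n,N}_3$ contribution is dominated by $C_\omega\bigl(m^{\frac{\alpha+1}{2}}N^{-\delta\alpha}+m^{\frac{\alpha+1}{2}}N^{-\alpha}+m^{\frac{\alpha+1}{2}}n^{-\delta\alpha}\bigr)$; each exponent is strictly negative because $\gamma<2\delta+1$ — which is exactly what forces $m^{\frac{\alpha+1}{2}}N^{-\delta\alpha}\to0$ — and $C_\omega<\infty$ a.s., so this goes to $0$ in probability. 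Combining the four pieces (via Markov for $E^{n,N}_1,E^{n,N}_2$ and pathwise bounds for $\widetilde{E}^N_0,E^{n,N}_3$) gives \eqref{eq:consitencyrate}.

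I expect the main obstacle to be the passage from the \emph{distributional} limit theorems for $\widehat{\rho_g(X)}^{n,N}_t$, which Theorem~\ref{th:Fourierestconv} supplies only pointwise in $t$, to \emph{moment} bounds uniform in $t$ at the sharp rate $\sqrt{N/n}$: this requires revisiting each step of the proof of Theorem~\ref{th:Fourierestconv} and recording quantitative $L^1$/$L^2$ versions, in particular for the jump part of $E^{n,N}_1$ (where $(L(\eta))$ must be invoked with the borderline exponent $\eta=\frac{\gamma-1}{2\gamma}$) and for the Fejér-truncation error $\widetilde{E}^N_0$ near the jumps of $X$, where the contributions over the $\lfloor mT\rfloor+1$ grid points must be summed with care and the balance $\iota<\frac{\gamma-1}{\gamma}\frac{\alpha}{1+\alpha}$ together with $\gamma<2\delta+1$ checked to make every resulting power of $n$ strictly negative.
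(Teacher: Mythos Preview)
Your proposal is correct and follows essentially the same route as the paper: localize so that $X$ is bounded, linearize $\rho_g^{-1}$ (the paper uses the mean value theorem, you use local Lipschitzness of $\rho_g^{-1}$---equivalent here), reduce to $m^{\frac{\alpha-1}{2}}\sum_p\|\widehat{\rho_g(X)}^{n,N}_{t^m_p}-\rho_g(X_{t^m_p})\|^{\alpha}1_{A_p^m}\to0$, and then treat the four pieces $\widetilde E_0^N,E_1^{n,N},E_2^{n,N},E_3^{n,N}$ from the proof of Theorem~\ref{th:Fourierestconv} separately, with $E_2^{n,N}$ being the sharp term that produces exactly the constraint $\iota<\frac{\gamma-1}{\gamma}\frac{\alpha}{1+\alpha}$.

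The only noteworthy differences are in bookkeeping. For $E_1^{n,N}$ the paper argues via a uniform-in-$t$ convergence in probability, namely $n^{\theta}\sup_t\|E_1^{n,N}(t)\|\stackrel{\mathbb P}{\to}0$ for $\theta\le\frac{\gamma-1}{2\gamma}$ (exploiting that the F\'ejer-kernel Riemann sums converge uniformly), whereas you go through $L^1$/$L^2$ moment bounds; both work. For $\widetilde E_0^N$ and $E_3^{n,N}$ the paper claims directly a uniform bound $\sup_p\|\widetilde E_0^N(t^m_p)\|1_{A_p^m}\le C_\omega N^{-\delta}$, relying on the fact that on $A_p^m$ the nearest jump of $X$ is at distance $\ge 1/m$ and that $m/N\to0$ (since $m\asymp N^{\iota\gamma}$ with $\iota\gamma<1$), together with $\int_{2\pi/(N+1)}^{\pi}F_N(x)\,dx\le C/N$. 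Your treatment is more explicit: you isolate the jump contribution as $C/(N\,\mathrm{dist}(t^m_p,\mathcal J_\omega))$ and sum $\mathrm{dist}^{-\alpha}$ over the grid, obtaining $m^{\frac{\alpha+1}{2}}N^{-\alpha}$ (plus the H\"older piece $m^{\frac{\alpha+1}{2}}N^{-\delta\alpha}$). This is arguably cleaner and makes the role of the finite-activity assumption transparent; the paper's shortcut to a single $N^{-\delta}$ bound glosses over this separation but leads to the same conclusion.
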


\begin{proof}
By localizing we can assume that $X$ is uniformly bounded. In fact, consider a localizing sequence
\[
\tau_k=\inf\{t \geq 0\, |\, \|X_t\| \geq k\}, \quad k \in \mathbb{N},
\]
and the processes
\begin{align*}
Y(k)_t&=y+\int_0^t b^Y_s ds + \int_0^{t\wedge \tau_k}\sqrt{X_{s}} dZ_s + \int_0^t \int_{\mathbb{R}^d} \chi(\xi)  (\mu^Y(d\xi,ds)- K_{s}(d\xi)ds) \\
&\quad+\int_0^t \int_{\mathbb{R}^d} (\xi-\chi(\xi))  \mu^Y(d\xi,ds),\\
X(k)_t&=X_t1_{\{t \leq \tau_k\}},
\end{align*}
where $(X_t(k))_{t \geq 0}$ is uniformly bounded by definition.
Moreover, define
\begin{align*}
\widehat{\rho_{g}(X(k))}^{n,N}_{t}:=\frac{1}{T}\sum_{j=-N}^{N} \left(1-\frac{|j|}{N}\right)e^{\im \frac{2\pi}{T} jt}V(Y(k),g,j)_T^{n}
\end{align*}
and
\begin{align*}
\widehat{X(k)}_t^{n,N}:=\rho^{-1}_g\left(\widehat{\rho_{g}(X(k))}^{n,N}_{t}\right).
\end{align*}
Then the left hand side of
\begin{align*}
\mathbb{P}\left[m^{\frac{\alpha-1}{2}}\sum_{p=0}^{ \left\lfloor mT \right\rfloor} 
 \left\|\widehat{X}_{t^m_p}^{n,N}- X_{t^m_p}\right\|^{\alpha} >\varepsilon\right]&\leq \mathbb{P}\left[\tau_k \leq T\right]\\
 &+
 \mathbb{P}\left[m^{\frac{\alpha-1}{2}}\sum_{p=0}^{ \left\lfloor mT \right\rfloor} 
 \left\|\widehat{X(k)}_{t^m_p}^{n,N}- X(k)_{t^m_p}\right\|^{\alpha}  >\varepsilon\right]
\end{align*}
tends to $0$, if the second term on the right hand side does. Therefore, we can assume uniform boundedness of $X$.

By the mean value theorem we obtain the identity
\begin{align*}
 \|\widehat{X}_{t^m_p}^{n,N}-X_{t^m_p}\|^{\alpha}= \left\|(\nabla \rho_g(\zeta^{n,N}_{t^m_p}))^{-1}\left(\widehat{\rho_{g}(X)}^{n,N}_{t^m_p}-\rho_g(X_{t^m_p})\right)\right\|^{\alpha},
\end{align*}
where $ \zeta^{n,N}_{t_p}$ is a random variable satisfying $\|\zeta^{n,N}_{t_p}-X_{t_p}\| \leq \|\widehat{X}_{t_p}^{n,N}-X_{t_p}\|$.
Due to the continuity assumption on $x\mapsto(\nabla \rho_g(x))^{-1}$ and boundedness of $X$, ~\eqref{eq:consitencyrate} converges to $0$ in probability if
\[
m^{\frac{\alpha-1}{2}}\sum_{p=0}^{ \left\lfloor mT \right\rfloor} 
\left\|\widehat{\rho_{g}(X)}^{n,N}_{t^m_p}-\rho_g(X_{t^m_p})\right\|^{\alpha}1_{A_p^m}\stackrel{\mathbb{P}}{\to}0.
\]

An inspection of the proof of Theorem~\ref{th:Fourierestconv} reveals that this is the case if the conditions between $m$, $n$ and $N$ are satisfied. Indeed, we split $\widehat{\rho_{g}(X)}^{n,N}_{t^m_p}-\rho_g(X_{t^m_p})$ in the same parts as in Theorem~\ref{th:Fourierestconv}, that is,
\[
\widehat{\rho_{g}(X)}^{n,N}_{t^m_p}-\rho_g(X_{t^m_p})=\widetilde{E}_0^N(t^m_p)+\sum_{i=1}^3 E^{n,N}_i(t^m_p),
\]
where $E^{n,N}_i(t)$, $i=1,2,3$ are defined in~\eqref{eq:E1} - \eqref{eq:E3} and $\widetilde{E}_0^N(t)$ is here given by 
\[
\widetilde{E}_0^N(t)=\sum_{k=-N}^{N} \left(1-\frac{|k|}{N}\right)e^{\im \frac{2\pi}{T} kt}\mathcal{F}(\rho_g(X))(k)-\rho_g(X_t).
\]

We start by showing 
\[
m^{\frac{\alpha-1}{2}}\sum_{p=1}^{ \left\lfloor mT \right\rfloor}\|\widetilde{E}^{N}_0(t^m_p)\|^{\alpha}1_{A_p^m}
\stackrel{a.s.}{\to} 0.
\]
By the assumption of finitely many jumps and $\delta$-H\"older continuity between two jumps, we can find a uniform  (in $m$) bound 
$C_{\omega}$ (depending on $\omega$) such that
\[
\sup_p\|\widetilde{E}^N_0(t^m_p)(\omega)\|1_{A_p^m} \leq C_{\omega} N^{-\delta}.
\]
Indeed, this is due to the fact that no jump occurs in $[t_{p-1}^m,t_{p+1}^m]$, the condition on $m$ and $N$, namely $m=LN^{\kappa}$ for some constant $L$ and $\kappa <1$,  and the way how the F\'ejer kernel declines, in particular that
\[
\int_{\frac{2\pi}{N+1}}^\pi F_N(x)dx\leq \frac{C}{N}
\]
holds true.
Hence we obtain 
\[
m^{\frac{\alpha-1}{2}}\sum_{p=1}^{ \left\lfloor mT \right\rfloor}\|\widetilde{E}^N_0(t^m_p)(\omega)\|^{\alpha}\leq C_{\omega} m^{\frac{\alpha+1}{2}}N^{-\delta \alpha},
\]
which tends to $0$ due to the relation of $m$ and $N$. 

Similarly we have a uniform (in $m$) convergence rate for $\sup_p \|E^{n,N}_3(t^m_p)\|1_{A_p^m}$ to $0$ which is of order $\max(N^{-(\gamma-1)}, n^{-\delta})$. The same arguments thus yield
\[
m^{\frac{\alpha-1}{2}}\sum_{p=1}^{ \left\lfloor mT \right\rfloor}\|E^{n,N}_3(t^m_p)\|^{\alpha}1_{A_p^m}
\stackrel{a.s.}{\to} 0.
\]

Concerning
\[
m^{\frac{\alpha-1}{2}}\sum_{p=1}^{ \left\lfloor mT \right\rfloor}\|E^{n,N}_1(t^m_p)\|^{\alpha}1_{A_p^m}
\stackrel{\mathbb{P}}{\to} 0,
\]
 it suffices to show that
\[
m^{\frac{\alpha+1}{2\alpha}}\sup_{t}\|E^{n,N}_1(t)\|\stackrel{\mathbb{P}}{\to} 0
\]
By the relation of $m$ and $n$, this then follows from the fact that 
\[
n^{\theta}\sup_{t}\|E^{n,N}_1(t)\|\stackrel{\mathbb{P}}{\to} 0
\]
for all $\theta \leq \frac{\gamma-1}{2\gamma}$. This latter property then follows from uniform convergence (in $t$) of 
\[
\sum_{i=1}^{ \left\lfloor nT \right\rfloor}\frac{1}{n} F_N\left(\frac{2\pi}{T}(t-t_{i-1}^n)\right)\quad \textrm { and } \quad \sum_{i=1}^{ \left\lfloor nT \right\rfloor}\frac{1}{n} \frac{F^2_N\left(\frac{2\pi}{T}(t-t_{i-1}^n)\right)}{N}.
\] 

In order to prove
\begin{align*}
&m^{\frac{\alpha-1}{2}}\sum_{p=1}^{ \left\lfloor mT \right\rfloor}\|E^{n,N}_2(t^m_p)\|^{\alpha}1_{A_p^m}\leq m^{\frac{\alpha-1}{2}}\sum_{p=1}^{ \left\lfloor mT \right\rfloor}\|E^{n,N}_2(t^m_p)\|^{\alpha}\\
&\quad=m^{\frac{\alpha-1}{2}}\sum_{p=1}^{ \left\lfloor mT \right\rfloor} 
\left\|\frac{1}{T}\sum_{i=1}^{ \left\lfloor nT \right\rfloor} \frac{1}{n} F_N\left(\frac{2\pi}{T}(t_p^m-t_{i-1}^n)\right)(g(\beta_i^n)-\rho_{i-1}^n(g))\right\|^{\alpha}\stackrel{\mathbb{P}}{\to}0,
\end{align*}
we estimate the $L_1$-norm of this expression by
\begin{align*}
&m^{\frac{\alpha+1}{2}}\sup_{t_p^m}\mathbb{E}\left[\left\|\frac{1}{T}\sum_{i=1}^{ \left\lfloor nT \right\rfloor} \frac{1}{n} F_N\left(\frac{2\pi}{T}(t_p^m-t_{i-1}^n)\right)(g(\beta_i^n)-\rho_{i-1}^n(g))\right\|^{2}\right]^{\frac{\alpha}{2}}\\
&\quad \leq m^{\frac{\alpha+1}{2}}\mathbb{E}\left[\sup_i\|(g(\beta_i^n)-\rho_{i-1}^n(g))\|^2\right]^{\frac{\alpha}{2}}\sup_{t_p^m}
\left\|\frac{1}{T^2}\sum_{i=1}^{ \left\lfloor nT \right\rfloor} \frac{1}{n^2} F^2_N\left(\frac{2\pi}{T}(t_p^m-t_{i-1}^n)\right)\right\|^{\frac{\alpha}{2}}\\
&\quad \leq  C m^{\frac{\alpha+1}{2}}n^{\frac{(1-\gamma)\alpha}{2\gamma}},
\end{align*}
which converges to $0$ due to the relation between $m$ and $n$. The last inequality is a consequence of Lemma~\ref{lem:Fejerconv}, where the assertion of~\eqref{eq:FN2} can be extended to uniform convergence.

\end{proof}

\begin{remark}
The reason for the assumption $\alpha <1 $ in the assumptions of Theorem~\ref{th:covcov} comes from the requirements of~\cite[Theorem 5.3.5 c) and 5.3.6]{JP:12} in the case of jumps. 
If $X$ has continuous trajectories, the result also holds true for $\alpha=1$.
\end{remark}

\appendix

\section{Simulation results}\label{sec:sim}

In this section we illustrate our theoretical results in the case of a multivariate affine model, where both the log-price $Y$ and the instantaneous covariance process $X$ can jump. More precisely, we consider a multivariate Bates-type model (compare, e.g.,~\cite{cfmt, fonsecaetal1, leippoldtrojani}, of the form
\begin{align*}
Y_t&=y+\int_0^t b_s ds + \int_0^t\sqrt{X_{s-}} dZ_s +\int_0^t \int_{\mathbb{R}^d} \xi  \mu^Y(d\xi,ds) ,\\
X_t&=x+\int_0^t(b+MX_t+X_tM^{\top})dt+\sqrt{X_t}dB_t\Sigma+\Sigma dB_t^{\top}\sqrt{X_t}+\\
&\quad+\int_0^t \int_{S^+_d} \xi  \mu^X(d\xi,ds) \, ,
\end{align*}
where
\begin{itemize}
\item $Z$ is a $d$-dimensional Brownian motion correlated with the $d \times d$ matrix of Brownian motions $B$ such that $Z=\sqrt{1-\rho^{\top}\rho}W+B\rho$, where $\rho \in [-1,1]^d$ such that $\rho^{\top}\rho\leq 1$ and $W$ is a $d$-dimensional Brownian motion independent of $B$,
\item $\mu^Y(d\xi,dt)$ is the random measure associated with the jumps of $Y$, whose compensator is given by 
$\sum_{i=1}^d\lambda^{Y_i} F^{Y_i}(d\xi_i)dt$, where $\lambda^{Y_i} > 0$ and $F^{Y_i}$ denotes the Gaussian density with mean $\mu_i$ and standard deviation $\sigma_i$,
\item $\mu^X(d\xi,dt)$ is the random measure associated with the jumps of $X$, whose compensator is given by $\lambda^{X_{11}}F^{X_{11}}(d\xi_{11})dt$, where $\lambda^{X_{11}}>0$ and $F^{X_{11}}$ denotes the density of the exponential distribution with parameter $\theta$,\footnote{We here only suppose that $X_{11}$ can jump.}
\item the drift of $Y$ is given by $b_{s,i}=-\frac{1}{2}X_{s,ii}-\lambda^{Y_i}(e^{\mu_i-\frac{1}{2}\sigma_i^2}-1)$ and 
\item the parameters of $X$ satisfy $M \in \mathbb{R}^{d \times d}$, $\Sigma \in S_d^+$, $b \in S_d^+$ such that 
\[
b-(d-1)\Sigma^2 \in S_d^+.
\]
\end{itemize}
Note that the truncation function of $Y$ is here chosen to be $0$.

As described in Section~\ref{sec:1} and Section~\ref{sec:4} we aim to recover the instantaneous covariance process $X$ and the parameters 
$\alpha:=\Sigma^2$ and $\rho$ from observations of $Y$. In order to be in accordance with market specifications, we simulate $Y$ and $X$ on $n=127750=511*250$ grid points, which corresponds to 1 year ($T=1$) of 1-minute data.
For our numerical simulation, we consider the case $d=2$ and use the following parameter values:

\begin{center}
\begin{tabular}{ | c | c |}
\hline
$\left(Y_{0,1}, Y_{0,2}\right)$ &$\left(0, 0\right)$\\ \hline
$\left(\begin{array}{c c}X_{0,11} & X_{0,12}\\
X_{0,12} & X_{0,22}\end{array}\right)$& $\left(\begin{array}{c c}0.09 & -0.036\\
-0.036 & 0.09
\end{array}\right)$ \\ \hline
$M$& $\left(\begin{array}{c c}-1.6 & -0.2\\
-0.4 & -1
\end{array}\right)$\\ \hline
$\alpha=\Sigma^2$ &  $\left(\begin{array}{c c}0.0725 & 0.06\\
0.06 & 0.1325
\end{array}\right)$\\ \hline
$b$ & $3.5 \alpha$\\ \hline
$\rho$ &$\left(-0.3 , -0.5\right)$\\ \hline
$\left(\lambda^{Y_1} ,\lambda^{Y_2}\right)$&$(100,100)$\\ \hline
$\left(\mu_1, \mu_2\right)$&$\left(-0.005,-0.003\right)$\\ \hline
$\left(\sigma_1,\sigma_2\right)$&$\left(0.015 ,0.02\right)$\\ \hline 
$\lambda^{X_{11}}$ & $10$\\ \hline
$\theta$ & $0.05$ \\\hline
\end{tabular}
\end{center}

In order to illustrate in particular that our estimator is robust to small and frequent jumps, the jump intensity of both log-prices is chosen to be quite high. Figure~\ref{Fig:log} below show simulated trajectories of the log-price and the instantaneous covariance process, where the jumps are removed in the second graph in each case.

\begin{figure}[ht]
\centering
\includegraphics[width=220pt,height= 150pt]{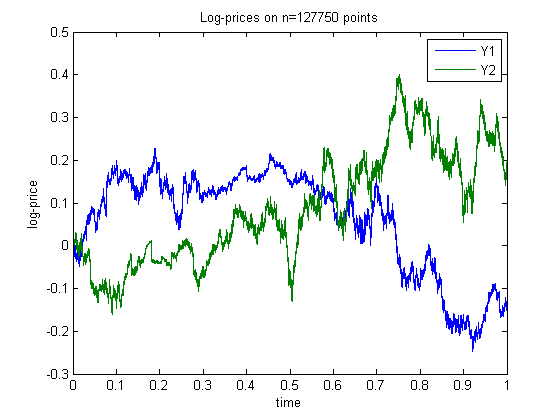}
\includegraphics[width=220pt,height= 150pt]{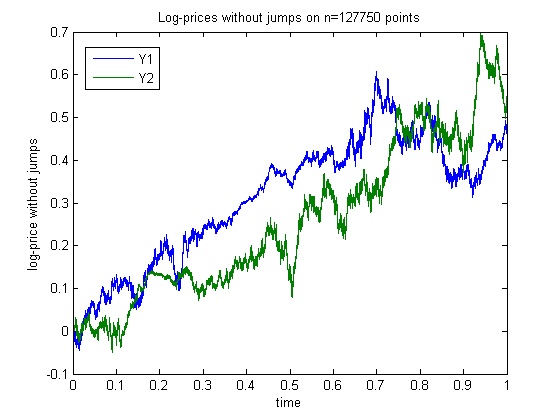}
\includegraphics[width=220pt,height= 150pt]{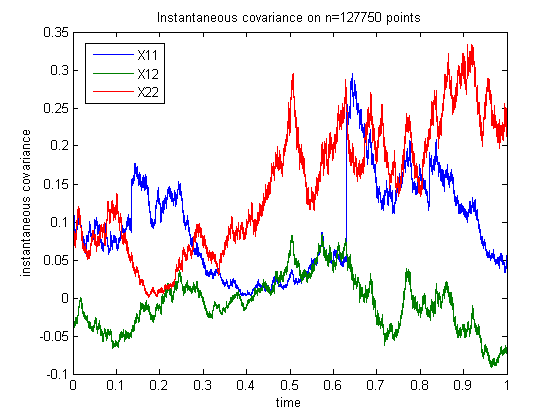}
\includegraphics[width=220pt,height= 150pt]{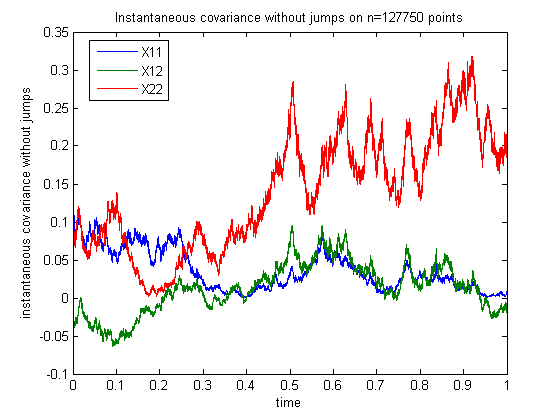}
\caption{Simulated log-price and instantaneous covariance with and without jumps on $n=127750$ points}\label{Fig:log}
\end{figure}

A comparison between the reconstructed and simulated trajectories of the instantaneous covariance process is shown in Figure~\ref{Fig:rec}. These figures illustrate that -- even in the case of (frequent) jumps in the log-price and in the variance (as it is the case for $X_{11}$) -- the paths of $X$ can be recovered very well.
For the reconstruction of the trajectories of $X$ we choose $N=210$ Fourier coefficients, which corresponds to the choice $\gamma\approx 2$ and $K\approx3$, as specified in Theorem~\ref{th:Fourierestconv}. This is a reasonable choice in view of an acceptable bias and a rather small variance. 
Both, the simulated as well as the reconstructed trajectories are evaluated at $2N+1$ points.
In our concrete implementation the estimator for the Fourier coefficients~\eqref{eq:Fouriercofest} is based on the Tauchen-Todorov specification of the function $g$, that is,
\[
g : \mathbb{R}^2 \to S_2, \quad  (y_1,y_2) \mapsto (\cos(y_i + 1_{\{j \neq i\}}y_j))_{i,j\in\{1,2\}}.
\]
In this case
\[
\rho_g(x)=\left( \begin{array}{c c}
e^{-\frac{1}{2} x_{11}} & e^{-\frac{1}{2} (x_{11}+2 x_{12}+x_{22})} \\
e^{-\frac{1}{2} (x_{11}+2 x_{12}+x_{22})}  &e^{-\frac{1}{2} x_{22}} \end{array}\right)
\]
and $\widehat{X}_t^{n,N}$ is obtained by
\begin{align*}
\widehat{X}_{t,ii}^{n,N}&=-2 \log \left(\widehat{\rho_{g_{ii}}(X)}^{n,N}_{t}\right), \quad i \in \{1,2\},\\
\widehat{X}_{t,12}^{n,N}&=\frac{1}{2}\left(-2 \log \left(\widehat{\rho_{g_{12}}(X)}^{n,N}_{t}\right)- \widehat{X}_{t,11}^{n,N}-\widehat{X}_{t,22}^{n,N}\right).
\end{align*}

The reconstructed trajectories of $X$ are then used to estimate the parameters $\alpha$ and $\rho$. To this end, we use 
the power variation estimators, i.e.,
\begin{align*}
V(\widehat{X}_{ij}^{n,N},f_r,0)_1^m&:=\frac{1}{m} \sum_{p=1}^m |\sqrt{m}\Delta_m^p \widehat{X}_{ij}^{n,N}|^{r}, \quad i,j \in \{1,2\},\\
V(\widehat{X}_{ii}^{n,N},Y_i,f_{r,s},0)_1^m&:=\frac{1}{m} \sum_{p=1}^m |\sqrt{m}\Delta_m^p \widehat{X}_{ii}^{n,N}|^r |\sqrt{m}\Delta_m^p Y_i|^{s}, \quad i \in \{1,2\},
\end{align*}
where
\begin{align*}
&f_r : \mathbb{R} \to \mathbb{R}_+, \quad  x \mapsto |x|^r,\\
&f_{r,s} : \mathbb{R}^2 \to \mathbb{R}_+, \quad  (x,y) \mapsto |x|^r|y|^s.
\end{align*}
These quantities are estimators for the power (co)variation of $X$ and $Y$. Indeed we have under the assumptions of Theorem~\ref{th:covcov} 
\begin{align*}
&V(\widehat{X}_{ii}^{n,N},f_r,0)_1^m \to \int_0^1 \rho_{f_r}(4\alpha_{ii} X_{s,ii}) ds \\
&\qquad=\sqrt{\frac{1}{\pi}}2^{\frac{r}{2}}\Gamma\left(\frac{r+1}{2}\right)(4\alpha_{ii})^{\frac{r}{2}}\int_0^1 X_{s,ii}^{\frac{r}{2}} ds,\\
&V(\widehat{X}_{12}^{n,N},f_r,0)_1^m \to \int_0^1 \rho_{f_r}(\alpha_{11} X_{s,11}+2\alpha_{12} X_{s,12}+\alpha_{22} X_{s,22}) ds\\ &\qquad=\sqrt{\frac{1}{\pi}}2^{\frac{r}{2}}\Gamma\left(\frac{r+1}{2}\right) \int_0^1 (\alpha_{11} X_{s,11}+2\alpha_{12} X_{s,12}+\alpha_{22} X_{s,22})^{\frac{r}{2}} ds\\
&\qquad=:PV_{12}(X,\alpha_{11}, \alpha_{12}, \alpha_{22}),\\
&V(\widehat{X}_{ii}^{n,N},Y_i,f_{r,s},0)_1^m \to \int_0^1 \rho_{f_{r,s}} 
\left(\left(\begin{array}{c c} 4 \alpha_{ii} X_{u,ii} &2(\sqrt{\alpha}\rho)_i X_{u,ii} \\
2(\sqrt{\alpha}\rho)_i X_{u,ii} & X_{u,ii}\end{array}\right)\right) du\\
&\qquad= \frac{1}{\pi}2^{\frac{r+s}{2}}\Gamma\left(\frac{r+1}{2}\right)\Gamma\left(\frac{s+1}{2}\right)  \\
&\qquad\quad \times{}_2F_1\left(-\frac{r}{2},-\frac{s}{2};\frac{1}{2}; \left(\frac{(\sqrt{\alpha}\rho)_i}{\sqrt{\alpha_{ii}}}\right)^2\right)
 (4 \alpha_{ii})^{\frac{r}{2}}\int_0^1  X_{u,ii}^{\frac{r+s}{2}}du\\
&\qquad=:PC_{ii}(X,Y,\alpha,\rho)
\end{align*}
as $m,n,N \to \infty$. The formulas on the right hand sides follow from the expressions for the absolute moments of the bivariate Gaussian distribution (see, e.g.,\cite{N:51}) and ${}_2F_1\left(a,b;c;x\right)$ denotes the Gaussian hypergeometric function. 

\begin{figure}[ht]
\centering
\includegraphics[width=240pt,height= 160pt]{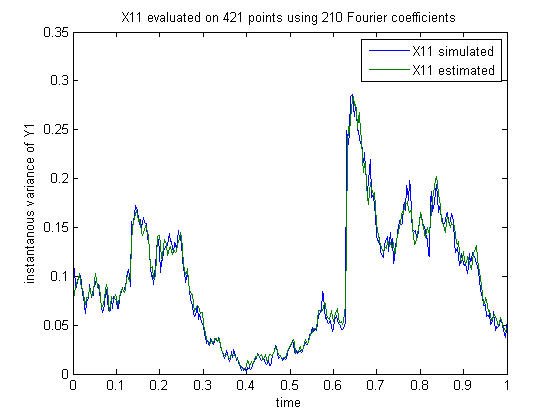}
\includegraphics[width=240pt,height= 160pt]{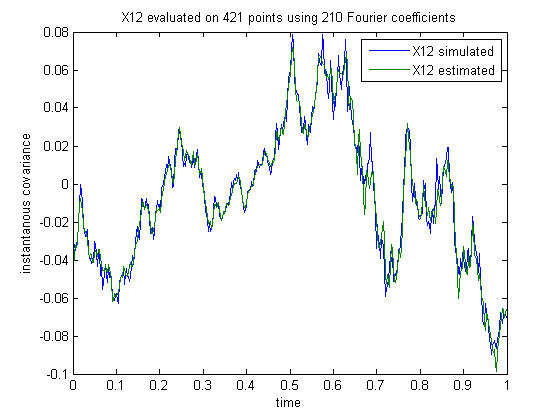}
\includegraphics[width=240pt,height= 160pt]{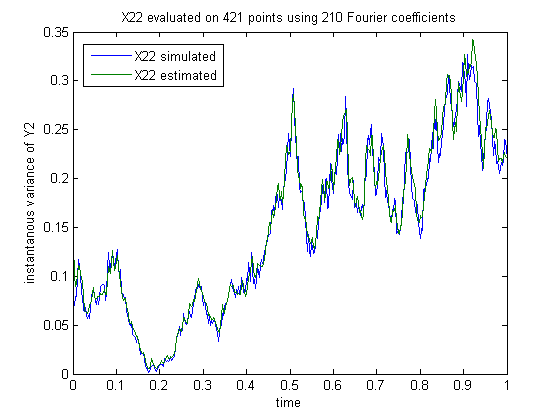}
\caption{Reconstructed and simulated instantaneous covariance evaluated on $421$ points}\label{Fig:rec}
\end{figure}

The estimators for $\alpha$ and $\rho$ can now be constructed via
\begin{align*}
\widehat{\alpha}_{ii}&=\frac{1}{8}\left(\frac{V(\widehat{X}_{ii}^{n,N},r,0)_1^m}{\sqrt{\frac{1}{\pi}}\Gamma\left(\frac{r+1}{2}\right)\frac{1}{m}\sum_{p=1}^m \left(\widehat{X}^{n,N}_{t_p^m,ii}\right)^{\frac{r}{2}}} ds\right)^{\frac{2}{r}}\\
\widehat{\alpha}_{12}&=\underset{ \alpha_{12} \in [-\sqrt{\widehat{\alpha}_{11}\widehat{\alpha}_{22}}, \sqrt{\widehat{\alpha}_{11}\widehat{\alpha}_{22}}]}{\operatorname{argmin}}\left(V(\widehat{X}_{12}^{n,N},f_r,0)_1^m-
PV_{12}(\widehat{X}^{n,N},\widehat{\alpha}_{11}, \alpha_{12}, \widehat{\alpha}_{22})\right)^2\\
\widehat{\rho}&=\underset{ \rho \in [-1,1]^2,\, \rho^{\top}{\rho}\leq 1}{\operatorname{argmin}}
\sum_{i=1}^2 \left(V(\widehat{X}_{ii}^{n,N},Y_i,f_{r,s},0)_1^m-PC_{ii}(\widehat{X}^{n,N},Y,\widehat{\alpha},\rho)\right)^2,
\end{align*}
where we discretize the corresponding integrals in $PV_{12}$ and $PC_{ii}$ and $\widehat{X}^{n,N}$ instead of $X$.
In our simulation study, we choose $r=\frac{1}{4}$ in $V(\widehat{X}_{11}^{n,N},f_r,0)_1^m$ and $r=1$ in $V(\widehat{X}_{ij}^{n,N},f_r,0)_1^m$ for $(ij)=(12)$ and $(ij)=(22)$ respectively. This is due to the fact that $X_{11}$ exhibits jumps and taking a lower power reduces the contribution of jumps in the power variation. In $V(\widehat{X}_{ii}^{n,N},Y_i,f_{r,s},0)_1^m$, $r$ and $s$ are chosen to be $\frac{1}{2}$. Figure~\ref{Fig:paramalpha} and~\ref{Fig:paramrho} show the estimated values for $\alpha$ and $\rho$ as a function of the grid points $m$. As a consequence of Theorem~\ref{th:covcov}, the grid corresponding to $m$ has to be coarsened  considerably with respect to the initial gridding with $n$ points (of order $n^{\frac{1}{4}}$ or even more depending on the power used). For this reason the number of grid points shown in the graphs is rather small. Nevertheless the estimation results are good approximations of the true parameter values and can further be improved by increasing $n$ and thus in turn also $m$.
\begin{figure}[ht]
\centering
\includegraphics[width=240pt,height= 160pt]{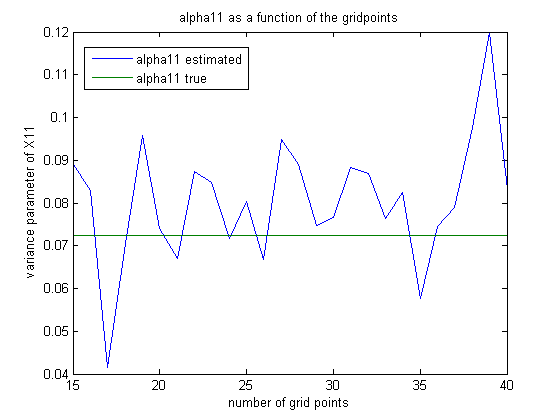}
\includegraphics[width=240pt,height= 160pt]{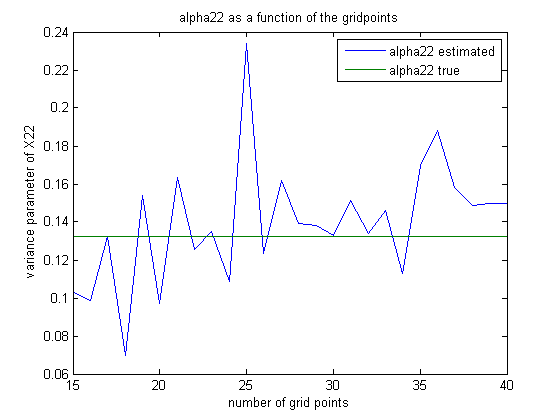}
\includegraphics[width=240pt,height= 160pt]{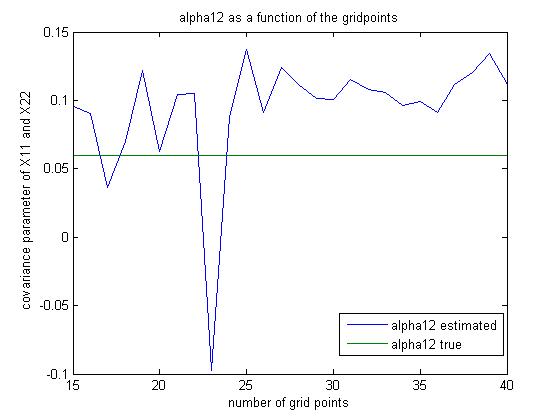}
\caption{Estimation of $\alpha$ as a function of the grid points}\label{Fig:paramalpha}
\end{figure}

\begin{figure}[ht]
\centering
\includegraphics[width=240pt,height= 160pt]{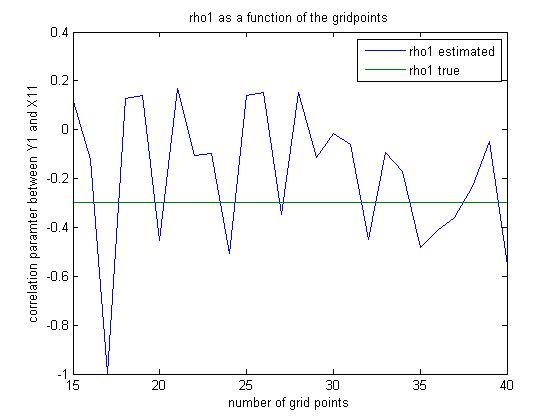}
\includegraphics[width=240pt,height= 160pt]{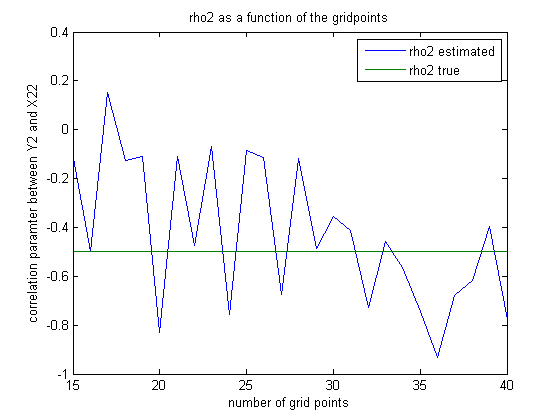}
\caption{Estimation of $\rho$ as a function of the grid points}\label{Fig:paramrho}
\end{figure}

%\bibliographystyle{abbrv}
%\bibliography{ref}

\end{document}